\newif\iffull  
\newif\ifnotfull  
\let\old@setaddresses\@setaddresses
\def\@setaddresses{\bigskip{\parindent 0pt\let\scshape\relax\let\ttfamily\relax\old@setaddresses}}
\tikzstyle{vertex}=[draw,circle,fill,inner sep=1pt]
\tikzstyle{maxarc}=[color=red, very thick]
\tikzstyle{invisible}=[color=white, opacity=0]
\newtheorem{theorem}{Theorem}
\newtheorem{corollary}[theorem]{Corollary}
\newtheorem{lemma}[theorem]{Lemma}
\theoremstyle{remark}
\newtheorem{remark}[theorem]{Remark}
\title{Combinatorial generation via permutation languages. \\ II. Lattice congruences}
\author{Hung P. Hoang}
\address[Hung P. Hoang]{Department of Computer Science, ETH Z\"urich, Switzerland}
\email{hung.hoang@inf.ethz.ch}
\author{Torsten M\"utze}
\address[Torsten M\"utze]{Department of Computer Science, University of Warwick, United Kingdom}
\email{torsten.mutze@warwick.ac.uk}
\thanks{An extended abstract of this paper appeared in the Proceedings of the 31st Annual ACM-SIAM Symposium on Discrete Algorithms (SODA~2020)~\cite{MR4141256}.}
\thanks{Torsten M\"utze is also affiliated with the Faculty of Mathematics and Physics, Charles University Prague, Czech Republic. He was supported by Czech Science Foundation grant GA~19-08554S, and by German Science Foundation grant~413902284.}
\begin{document}

\begin{abstract}
This paper deals with lattice congruences of the weak order on the symmetric group, and initiates the investigation of the cover graphs of the corresponding lattice quotients.
These graphs also arise as the skeleta of the so-called quotientopes, a family of polytopes recently introduced by Pilaud and Santos \textit{[Bull.~Lond.~Math.~Soc., 51:406–420, 2019]}, which generalize permutahedra, associahedra, hypercubes and several other polytopes.
We prove that all of these graphs have a Hamilton path, which can be computed by a simple greedy algorithm.
This is an application of our framework for exhaustively generating various classes of combinatorial objects by encoding them as permutations.
We also characterize which of these graphs are vertex-transitive or regular via their arc diagrams, give corresponding precise and asymptotic counting results, and we determine their minimum and maximum degrees.
\iffull
Moreover, we investigate the relation between lattice congruences of the weak order and pattern-avoiding permutations.
\fi
\end{abstract}

\keywords{Weak order, symmetric group, lattice congruence, quotientope, polytope, Hamilton path, vertex-transitive, regular}
\subjclass[2010]{05A05, 05C45, 06B05, 06B10, 52B11, 52B12}
\maketitle

\captionsetup{width=.92\linewidth}

\section{Introduction}
\label{sec:intro}

We let $S_n$ denote the set of all permutations on the set $\{1,\ldots,n\}$.
The \emph{inversion set} of a permutation $\pi\in S_n$ is the set of all decreasing pairs of values of~$\pi=a_1\cdots a_n$, formally
\begin{equation*}
\inv(\pi):=\big\{(a_i,a_j)\mid 1\leq i<j\leq n\text{ and } a_i>a_j\big\}.
\end{equation*}
We consider the classical \emph{weak order} on~$S_n$, the poset obtained by ordering all permutations from~$S_n$ by containment of their inversion sets, i.e., $\pi<\rho$ for any two permutations~$\pi,\rho$ in the weak order if and only if~$\inv(\pi)\seq \inv(\rho)$; see the left hand side of Figure~\ref{fig:cong}.
Equivalently, the weak order on~$S_n$ can be obtained as the poset of regions of the \emph{braid arrangement} of hyperplanes.
Also, its Hasse diagram is the graph of the \emph{permutahedron}.

It is well-known that the weak order forms a \emph{lattice}, i.e., joins $\pi\vee\rho$ and meets $\pi\wedge\rho$ are well-defined.
A \emph{lattice congruence} is an equivalence relation~$\equiv$ on~$S_n$ that is compatible with taking joins and meets.
Formally, if~$\pi\equiv\pi'$ and~$\rho\equiv\rho'$ then we also have $\pi\vee \rho\equiv \pi'\vee \rho'$ and $\pi\wedge \rho\equiv \pi'\wedge \rho'$.
The \emph{lattice quotient}~$S_n/{\equiv}$ is obtained by taking the equivalence classes as elements, and ordering them by~$X<Y$ if and only if there is a~representative $\pi\in X$ and a representative~$\rho\in Y$ such that~$\pi<\rho$ in the weak order; see the right hand side of Figure~\ref{fig:cong}.
The study of lattice congruences of the weak order has been developed considerably in recent years, in particular thanks to Reading's works, summarized in~\cite{MR3221544, MR3645056, MR3645055}.
All of these results have beautiful ramifications into posets, polytopes, geometry, and combinatorics.
In fact, many of these results even hold in the more general setting of arbitrary Coxeter groups and for the poset of regions of general hyperplane arrangements.

It is not hard to see that there are double-exponentially (in $n$) many distinct lattice congruences of the weak order on~$S_n$, and many important lattices arise as quotients of suitable lattice congruences:
the Boolean lattice, the Tamari lattice~\cite{MR0146227} (shown in Figure~\ref{fig:cong}), type~A Cambrian lattices~\cite{MR2258260,MR3628225}, permutree lattices~\cite{MR3856522}, the increasing flip lattice on acyclic twists~\cite{MR3741436}, and the rotation lattice on diagonal rectangulations~\cite{MR2871762,MR2914637,MR3878132}.

In a recent paper, Pilaud and Santos~\cite{MR3964495} showed how to realize the cover graph of any lattice quotient~$S_n/{\equiv}$ as the graph of an $(n-1)$-dimensional polytope, and they called these polytopes \emph{quotientopes}.
Their results generalize many earlier constructions of polytopes for the aforementioned special lattices~\cite{MR2108555,MR2321739,MR3800847,MR3856522,MR2864447,MR2871762}.
In particular, quotientopes generalize permutahedra, associahedra, and hypercubes.
Interestingly, quotientopes are defined by a set of gliding hyperplanes that is consistent with refining the corresponding lattice congruences, i.e., moving the hyperplanes outwards corresponds to refining the equivalence classes.
In particular, the permutahedron contains all other quotientopes, and the hypercube is contained in all (full-dimensional) quotientopes.
Figure~\ref{fig:quotient} shows all quotientopes for $n=4$ ordered by refinement of the corresponding congruences, with permutahedron, associahedron, and 3-cube highlighted.

\begin{figure}
\includegraphics{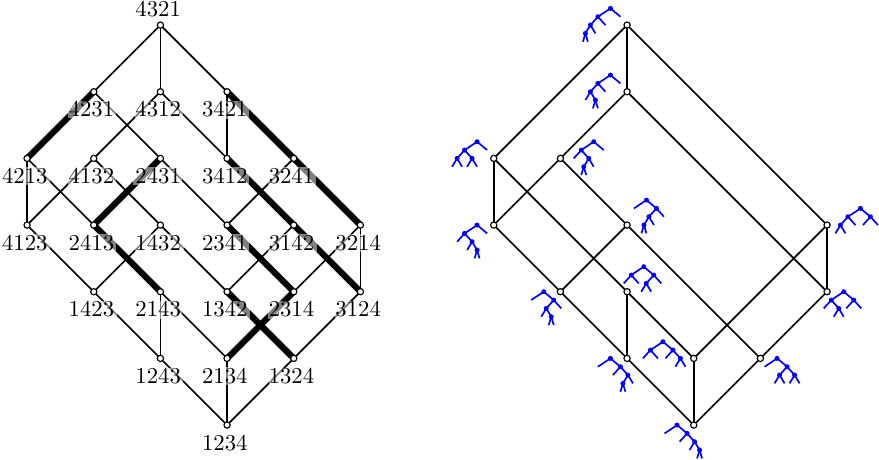}
\caption{Hasse diagrams of the weak order on~$S_4$ (left) with a lattice congruence~$\equiv$ (bold edges), and of the resulting lattice quotient~$S_4/{\equiv}$ (right), which is the well-known Tamari lattice (with corresponding binary trees).
}
\label{fig:cong}
\end{figure}

There are several long-standing open problems revolving around Hamilton paths and cycles in graphs of polytopes and other highly symmetric graphs, most prominently Barnette's conjecture and Lov{\'a}sz' conjecture.
Barnette's conjecture~\cite{MR0250896} asserts that the graph of every simple three-dimensional polytope with an even number of edges on each face has a Hamilton cycle.
Another variant of the conjecture states that the graphs of all simple three-dimensional polytopes with face sizes at most~6, in particular all fullerenes, have a Hamilton cycle~\cite{MR1737931}.
Barnette also conjectured that the graph of every simple 4-dimensional polytope has a Hamilton cycle~\cite[p.~1145]{MR266050}.
Note that the simplicity of these polytopes means that their graphs are 3-regular or 4-regular, respectively.
Lov{\'a}sz' conjecture~\cite{MR0263646} asserts that every vertex-transitive graph has a Hamilton path.
A stronger form of his conjecture asserts that such graphs even have a Hamilton cycle, with five well-understood exceptions, among them the Petersen graph and the Coxeter graph.

\subsection{Our results}

In this paper we initiate the investigation of the cover graphs of lattice quotients of the weak order on the symmetric group~$S_n$, or equivalently, of the graphs of the quotientopes introduced by Pilaud and Santos.
Our first main result is that for every lattice congruence~$\equiv$ of the weak order on~$S_n$, the cover graph of the lattice quotient $S_n/{\equiv}$ has a Hamilton path (Theorem~\ref{thm:lattice}).
As a consequence, the graph of every quotientope has a Hamilton path (Corollary~\ref{cor:quotient}); see Figure~\ref{fig:quotient} on page~\pageref{fig:quotient}.
These Hamilton paths are computed by a simple greedy algorithm, which we devised in~\cite{MR4391718} within a general framework for exhaustively generating various classes of combinatorial objects by encoding them as permutations.
For the permutahedron, associahedron, and hypercube, algorithmic constructions of such Hamilton paths were already known by the Steinhaus-Johnson-Trotter algorithm~\cite{DBLP:journals/cacm/Trotter62,MR0159764}, by the Lucas-Roelants van Baronaigien-Ruskey tree rotation algorithm~\cite{MR1239499} (see also \cite{MR920505,MR1723053}), and by the binary reflected Gray code~\cite{gray_1953}, respectively.
Our results thus unify and generalize all these classical algorithms.
Motivated by our Hamiltonicity results and by Barnette's and Lov{\'a}sz' conjectures, we also characterize which lattice congruences of the weak order on~$S_n$ yield regular or vertex-transitive quotientopes.
This characterization uses arc diagrams introduced by Reading~\cite{MR3335492}, and allows us to derive corresponding precise and asymptotic counting results.
We also determine the minimum and maximum degrees of quotientopes.
All of these results are summarized in Table~\ref{tab:trans} on page~\pageref{tab:trans} (theorems are referenced in the table).
In those results, Catalan numbers, integer compositions and partitions, and the Erd\H{o}s-Szekeres theorem make their appearance.
\iffull
As a last result, we formulate conditions under which a set of pattern-avoiding permutations can be realized as a lattice congruence of the weak order on~$S_n$ (Theorem~\ref{thm:well} on page~\pageref{thm:well}).
\fi

\subsection{Outline of this paper}

This is part~II in our paper series on exhaustively generating various classes of combinatorial objects by encoding them as permutations.
In part~I~\cite{MR4391718}, we developed the fundamentals of this framework, including a simple greedy algorithm for exhaustive generation, and we applied the framework to generate many different classes of pattern-avoiding permutations.
In Section~\ref{sec:recap} of the present paper, we briefly recap the necessary background from this first paper.
In Section~\ref{sec:cong}, we apply our framework to generating lattice congruences of the weak order on~$S_n$, proving that all quotientopes have a Hamilton path.
In Section~\ref{sec:reg}, we characterize and count regular and vertex-transitive quotientopes, and we determine their minimum and maximum degree.
\iffull
Lastly, in Section~\ref{sec:pattern}, we briefly discuss the relation between pattern-avoiding permutations and lattice congruences of the weak order.
\fi
We conclude with some interesting open problems in Section~\ref{sec:open}.

\section{Recap: Zigzag languages and Algorithm~J}
\label{sec:recap}

\subsection{Preliminaries}

For any two integers~$a$ and~$b$ with~$a\leq b$ we define $[a,b]:=\{a,a+1,\ldots,b\}$ and $\left]a,b\right[:=[a,b]\setminus\{a,b\}$, and we introduce the abbreviation $[n]:=[1,n]$.
We use  $\ide_n=12\cdots n$ to denote the identity permutation, and $\varepsilon\in S_0$ to denote the empty permutation.
For any $\pi\in S_{n-1}$ and any $1\leq i\leq n$, we write $c_i(\pi)\in S_n$ for the permutation obtained from~$\pi$ by inserting the new largest value~$n$ at position~$i$ of~$\pi$, i.e., if $\pi=a_1\cdots a_{n-1}$ then $c_i(\pi)=a_1\cdots a_{i-1} \, n\, a_i \cdots a_{n-1}$.
Moreover, for~$\pi\in S_n$, we write $p(\pi)\in S_{n-1}$ for the permutation obtained from~$\pi$ by removing the largest entry~$n$.

Given a permutation $\pi=a_1\cdots a_n$ with a substring $a_i\cdots a_j$ with $a_i>a_{i+1},\ldots,a_j$, a \emph{right jump of the value~$a_i$ by $j-i$~steps} is a cyclic left rotation of this substring by one position to $a_{i+1}\cdots a_j a_i$.
Similarly, given a substring $a_i\cdots a_j$ with $a_j>a_i,\ldots,a_{j-1}$, a \emph{left jump of the value~$a_j$ by $j-i$~steps} is a cyclic right rotation of this substring to $a_j a_i\cdots a_{j-1}$.

\subsection{The basic algorithm}

The following simple greedy algorithm was proposed in~\cite{MR4391718} to generate a set of permutations $L_n\seq S_n$.
We say that a jump is \emph{minimal} (w.r.t.~$L_n$), if every jump of the same value in the same direction by fewer steps creates a permutation that is not in~$L_n$.
Note that each entry of the permutation admits at most one minimal left jump and at most one minimal right jump.

\begin{algo}{Algorithm~J}{Greedy minimal jumps}
This algorithm attempts to greedily generate a set of permutations $L_n\seq S_n$ using minimal jumps starting from an initial permutation $\pi_0 \in L_n$.
\begin{enumerate}[label={\bfseries J\arabic*.}, leftmargin=8mm, noitemsep, topsep=3pt plus 3pt]
\item{} [Initialize] Visit the initial permutation~$\pi_0$.
\item{} [Jump] Generate an unvisited permutation from~$L_n$ by performing a minimal jump of the largest possible value in the most recently visited permutation.
If no such jump exists, or the jump direction is ambiguous, then terminate.
Otherwise visit this permutation and repeat~J2.
\end{enumerate}
\end{algo}

Put differently, in step~J2 we consider the entries $n,n-1,\ldots,2$ of the current permutation in decreasing order, and for each of them we check whether it allows a minimal left or right jump that creates a previously unvisited permutation, and we perform the first such jump we find, unless the same entry also allows a jump in the opposite direction, in which case we terminate.
If no minimal jump creates an unvisited permutation, we also terminate the algorithm.
For example, consider $L_4 = \{1243, 1423, 4123, 4213, 2134\}$.
Starting with $\pi_0 = 1243$, the algorithm generates~$\pi_1=1423$ (obtained from~$\pi_0$ by a left jump of the value~4 by 1~step), then~$\pi_2=4123$, then~$\pi_3=4213$ (in~$\pi_2$, 4 cannot jump, as~$\pi_0$ and~$\pi_1$ have been visited before; 3 cannot jump either to create any permutation from~$L_4$, so 2 jumps left by 1~step), then $\pi_4=2134$, successfully generating~$L_4$.
If instead we initialize with $\pi_0=4213$, then the algorithm generates $\pi_1=2134$, and then stops, as no further jump is possible.
If we choose $\pi_0=1423$, then we may jump~4 to the left or right (by 1~step), but as the direction is ambiguous, the algorithm stops immediately.
As mentioned before, the algorithm may stop before having visited the entire set~$L_n$ either because no minimal jump leading to a new permutation from~$L_n$ is possible, or because the direction of jump is ambiguous in some step.
By the definition of step~J2, the algorithm will never visit any permutation twice.

\subsection{Zigzag languages}

The following theorem, proved in~\cite{MR4391718}, provides a sufficient condition on the set~$L_n$ to guarantee that Algorithm~J is successful.
This condition is captured by the following closure property of the set~$L_n$.
A set of permutations~$L_n\seq S_n$ is called a \emph{zigzag language}, if either $n=0$ and $L_0=\{\varepsilon\}$, or if $n\geq 1$ and $L_{n-1}:=\{p(\pi)\mid \pi\in L_n\}$ is a zigzag language satisfying either one of the following conditions:
\begin{enumerate}[label={(z\arabic*)}, leftmargin=8mm, noitemsep, topsep=3pt plus 3pt]
\item For every $\pi\in L_{n-1}$ we have~$c_1(\pi)\in L_n$ and~$c_n(\pi)\in L_n$.
\item We have $L_n=\{c_n(\pi)\mid \pi\in L_{n-1}\}$.
\end{enumerate}

The definition of zigzag language given in~\cite{MR4391718} did not include condition~(z2), but only condition~(z1).
However, all results from our earlier paper carry over straightforwardly.
Essentially, condition~(z2) is a technicality we include here to be able to handle lattice congruences in full generality.
Condition~(z2) expresses that $L_n$ is obtained from~$L_{n-1}$ simply by inserting the new largest value~$n$ at the rightmost position of all permutations, i.e., the value~$n$ only ever appears to the right of~$1,\ldots,n-1$.
In this case we will have in particular $|L_n|=|L_{n-1}|$.

We now define a sequence~$J(L_n)$ of all permutations from a zigzag language~$L_n\seq S_n$.
For any $\pi\in L_{n-1}$ we let $\rvec{c}(\pi)$ be the sequence of all $c_i(\pi)\in L_n$ for $i=1,2,\ldots,n$, starting with~$c_1(\pi)$ and ending with~$c_n(\pi)$, and we let $\lvec{c}(\pi)$ denote the reverse sequence, i.e., it starts with~$c_n(\pi)$ and ends with~$c_1(\pi)$.
In words, those sequences are obtained by inserting into~$\pi$ the new largest value~$n$ from left to right, or from right to left, respectively, in all possible positions that yield a permutation from~$L_n$, skipping the positions that yield a permutation that is not in~$L_n$.
The sequence~$J(L_n)$ is defined recursively as follows:
If $n=0$ then we define $J(L_0):=\varepsilon$, and if~$n\geq 1$ then we consider the sequence $J(L_{n-1})=:\pi_1,\pi_2,\ldots$ and define
\begin{subequations}
\label{eq:JLn12}
\begin{equation}
\label{eq:JLn1}
  J(L_n)=\lvec{c}(\pi_1),\rvec{c}(\pi_2),\lvec{c}(\pi_3),\rvec{c}(\pi_4),\ldots
\end{equation}
if condition~(z1) holds, and we define
\begin{equation}
\label{eq:JLn2}
  J(L_n)=c_n(\pi_1),c_n(\pi_2),c_n(\pi_3),c_n(\pi_4),\ldots
\end{equation}
\end{subequations}
if condition~(z2) holds.

\begin{theorem}[\cite{MR4391718}]
\label{thm:jump}
Given any zigzag language of permutations~$L_n$ and initial permutation $\pi_0 = \ide_n$, Algorithm~J visits every permutation from~$L_n$ exactly once, in the order $J(L_n)$ defined by~\eqref{eq:JLn12}.
\end{theorem}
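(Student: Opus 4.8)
The plan is to prove Theorem~\ref{thm:jump} by induction on~$n$, mirroring the recursive structure of the definition of zigzag language. The base case $n=0$ is trivial: $L_0=\{\varepsilon\}$ and Algorithm~J simply visits~$\varepsilon$ and halts. For the inductive step, assume the claim holds for $L_{n-1}$, so that Algorithm~J started at $\ide_{n-1}$ visits all of $L_{n-1}$ in some order $\pi_0=\ide_{n-1},\pi_1,\ldots,\pi_{m-1}$, where $m=|L_{n-1}|$. The key structural observation is that the permutations of $L_n$ lying above a fixed $\pi\in L_{n-1}$ (i.e.\ those $\rho$ with $p(\rho)=\pi$) are exactly $c_i(\pi)$ for those insertion positions~$i$ allowed by the language, and in case~(z1) this includes at least $c_1(\pi)$ and $c_n(\pi)$; moving the value~$n$ from position~$1$ to position~$n$ (or back) through single-step minimal jumps sweeps through precisely these permutations. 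This is the ``zigzag'' picture: within each fiber over $\pi_j$, the value~$n$ travels all the way from one end to the other.

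First I would treat case~(z2) separately, as it is immediate: there $L_n=\{c_n(\pi)\mid\pi\in L_{n-1}\}$, the value~$n$ sits frozen in the last position of every permutation, so it never jumps; thus Algorithm~J on $L_n$ behaves exactly like Algorithm~J on $L_{n-1}$ with a passive~$n$ appended, and the induction hypothesis applies verbatim. The substantive case is~(z1). Here I would argue that the run of Algorithm~J on $L_n$ decomposes into $m$ consecutive ``sweeps'': in sweep~$j$ (for $j=0,\ldots,m-1$) the algorithm visits all permutations $\rho\in L_n$ with $p(\rho)=\pi_j$, with the value~$n$ moving monotonically from one end of the word to the other (the direction alternating with~$j$, which is the source of the name). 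Between sweep~$j$ and sweep~$j+1$, the value~$n$ is parked at the end it just reached, and the next jump is the largest-value jump available---necessarily a jump of some value $<n$ that transforms the length-$(n-1)$ pattern $\pi_j$ into $\pi_{j+1}$, exactly the jump Algorithm~J on $L_{n-1}$ would make at that step. One must check that this ``transition'' jump is still \emph{minimal} with respect to $L_n$ when $n$ sits at the far end: since $n$ is at an extreme position it does not obstruct the substring being rotated, so minimality w.r.t.~$L_n$ reduces to minimality w.r.t.~$L_{n-1}$, using that $L_{n-1}=\{p(\pi)\mid\pi\in L_n\}$ and that $c_1$ and $c_n$ of each member of $L_{n-1}$ lie in $L_n$. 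One also checks the greedy largest-value rule picks the sweep move over the transition move whenever a sweep move is available, and that the direction of the value~$n$'s jump is never ambiguous (at an interior position only one direction continues the sweep without revisiting; at an end position only the inward direction is possible, but that move would revisit, so $n$ yields to a smaller value).

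The main obstacle I expect is the careful bookkeeping in the transition step: verifying that after a full sweep over $\pi_j$, the very next move selected by the greedy rule is indeed the $L_{n-1}$-move from $\pi_j$ to $\pi_{j+1}$ applied inside the length-$n$ word, and that this move does not accidentally get preempted by some residual jump of the value~$n$ or by a non-minimal jump of a smaller value that happens to land in $L_n$. This requires relating minimal jumps in $L_n$ to minimal jumps in $L_{n-1}$ precisely when $n$ is at an endpoint, which is where condition~(z1)'s guarantee that both $c_1(\pi)$ and $c_n(\pi)$ belong to $L_n$ gets used: it ensures the sweep over $\pi_j$ genuinely runs from end to end (so that $n$ is available to be parked at \emph{either} extreme, as needed to start the next sweep in the correct direction), and it ensures no ``short'' sweep terminates the algorithm prematurely. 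A secondary, more routine point is confirming that the directions alternate correctly so that the value~$n$ is always at the correct end to begin the next sweep; this follows by a parity argument once the decomposition into sweeps is established. Finally, since Algorithm~J never revisits a permutation and the $m$ sweeps together enumerate every fiber over every $\pi_j$, i.e.\ all of $L_n$, the algorithm visits each permutation of $L_n$ exactly once, completing the induction.
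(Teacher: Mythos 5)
Your proposal reproduces exactly the structure of the paper's proof: the recursive decomposition of the run of Algorithm~J into alternating left-to-right and right-to-left sweeps of the value~$n$ over each fiber $p^{-1}(\pi_j)\cap L_n$ is precisely the sequence $J(L_n)=\lvec{c}(\pi_1),\rvec{c}(\pi_2),\lvec{c}(\pi_3),\ldots$ that the paper defines in~\eqref{eq:JLnp} and proves Algorithm~J to generate. Your treatment of case~(z2) as a passive~$n$, the observation that the transition jump between sweeps is the $L_{n-1}$ step with $n$ parked at an extreme, and the analysis of why $n$'s jump direction is never ambiguous are all the same ingredients as in the cited proof, so this is essentially the paper's argument recast informally.
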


\section{Generating lattice congruences of the weak order}
\label{sec:cong}

In this section we show how Algorithm~J can be used to generate any lattice congruence of the weak order on~$S_n$.
The main results of this section are summarized in Theorem~\ref{thm:lattice} and Corollary~\ref{cor:quotient} below.

\subsection{Preliminaries}

We begin to recall a few basic definitions for a poset~$(P,<)$.
An \emph{antichain} in~$P$ is a set of pairwise incomparable elements.
A subset~$U\seq P$ is an \emph{upset} if~$x\in U$ and~$x<y$ implies that~$y\in U$.
Similarly, $D\seq P$ is a \emph{downset} if~$x\in D$ and~$y<x$ implies that~$y\in D$.
Clearly, the complement of an upset is a downset and vice versa.
Moreover, the minimal elements of an upset and the maximal elements of a downset form an antichain.
The \emph{upset of an element~$x\in P$} is the upset containing exactly all $y$ with~$x<y$.
Similarly, the \emph{downset of~$x$} is the downset containing exactly all $y$ with~$y<x$.
An \emph{interval}~$X=[x,y]$ in~$P$ is the intersection of the upset of~$x$ with the downset of~$y$, and we write $x=\min(X)$ and $y=\max(X)$.

A \emph{cover relation} is a pair $x,y\in P$ with~$x<y$ for which there is no~$z\in P$ with~$x<z<y$.
In this case we say that \emph{$y$ covers $x$} and we write~$x\lessdot y$.
We also refer to~$x$ as a \emph{down-neighbor} of~$y$, and to~$y$ as an \emph{up-neighbor} of~$x$.
Clearly, the cover relations form an acyclic directed graph with vertex set~$P$, and this graph is referred to as the \emph{cover graph of~$P$}, and its edges as \emph{cover edges}.
A drawing of the cover graph with all cover edges $x\lessdot y$ leading upwards is called a \emph{Hasse diagram}.
A poset~$(P,<)$ is called a \emph{lattice}, if for any two~$x,y\in P$ there is a unique smallest element~$z$, called the \emph{join~$x\vee y$ of~$x$ and~$y$}, such that~$z>x$ and~$z>y$, and if there is a unique largest element~$z$, called the \emph{meet~$x\wedge y$ of~$x$ and~$y$}, satisfying~$z<x$ and~$z<y$.
A \emph{lattice congruence} is an equivalence relation~$\equiv$ on~$P$ such that~$x\equiv x'$ and~$y\equiv y'$ implies that $x\vee y\equiv x'\vee y'$ and $x\wedge y\equiv x'\wedge y'$.
Given any lattice congruence~$\equiv$, we obtain the \emph{lattice quotient}~$P/{\equiv}$ (which is itself a lattice) by taking the equivalence classes as elements, and ordering them by~$X<Y$ if and only if there is an~$x\in X$ and a~$y\in Y$ such that~$x<y$ in~$P$.
Observe that the cover graph of~$P/{\equiv}$ is obtained from the cover graph of~$P$ by contracting all cover edges $x\lessdot y$ with~$x\equiv y$.
For any $x\in P$, we let $X_P(x)=X(x)$ denote the equivalence class in~$P/{\equiv}$ containing~$x$.

We will need the following two lemmas.

\begin{lemma}
\label{lem:interval}
For any lattice congruence of a finite lattice, every equivalence class is an interval.
\end{lemma}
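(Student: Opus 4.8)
The plan is to show that an equivalence class $X$ of a lattice congruence $\equiv$ on a finite lattice $P$ is closed under meets and joins, so that finiteness forces $X$ to have a least element $\check{x}$ and a greatest element $\hat{x}$, and then to verify that every element lying between $\check{x}$ and $\hat{x}$ belongs to $X$; this exhibits $X$ as the interval $[\check{x},\hat{x}]$.

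First I would establish closure under meet and join. If $x,y\in X$, then $x\equiv y$, and applying the congruence property to the two relations $x\equiv y$ and $y\equiv y$ gives $x\vee y\equiv y\vee y=y$, so $x\vee y\in X$; the symmetric argument gives $x\wedge y\in X$. Hence $X$ is a finite sublattice of $P$, so $\check{x}:=\bigwedge X$ and $\hat{x}:=\bigvee X$ are well-defined and lie in $X$, and clearly $X\subseteq[\check{x},\hat{x}]$.

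For the reverse inclusion, let $z\in P$ with $\check{x}\leq z\leq\hat{x}$. Since $\check{x},\hat{x}\in X$ we have $\check{x}\equiv\hat{x}$, and together with $z\equiv z$ the congruence property yields $z\vee\check{x}\equiv z\vee\hat{x}$. Because $\check{x}\leq z$ we have $z\vee\check{x}=z$, and because $z\leq\hat{x}$ we have $z\vee\hat{x}=\hat{x}$; hence $z\equiv\hat{x}$, so $z\in X$. Therefore $X=[\check{x},\hat{x}]$, an interval.

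There is no serious obstacle; the only points requiring care are the correct bookkeeping in the congruence axiom (choosing the ``dummy'' second argument so that one of the two joins or meets collapses to an endpoint of $X$) and the appeal to finiteness to guarantee that $\bigwedge X$ and $\bigvee X$ exist and again lie in $X$. If desired, the last step can equally be phrased with meets, using $z\wedge\hat{x}\equiv z\wedge\check{x}$ to deduce $z\equiv\check{x}$.
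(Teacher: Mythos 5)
Your proof is correct. The paper does not write out an argument, stating only that the lemma ``follows immediately from the definition of lattice congruence,'' and what you have written is precisely the standard expansion of that remark: closure of an equivalence class under $\vee$ and $\wedge$ gives (by finiteness) a least and greatest element, and then $z\vee\check{x}\equiv z\vee\hat{x}$ (or dually $z\wedge\hat{x}\equiv z\wedge\check{x}$) shows every $z$ in between is equivalent to the endpoints.
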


\begin{lemma}
\label{lem:subposet}
Given a finite lattice~$(P,<)$ and any lattice congruence~$\equiv$, the lattice quotient $P/{\equiv}$ is isomorphic to the induced subposet of~$P$ whose elements are either the minima of the equivalence classes or the maxima.
\end{lemma}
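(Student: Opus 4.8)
The plan is to prove Lemma~\ref{lem:subposet} by exhibiting an explicit order isomorphism, using Lemma~\ref{lem:interval} as the crucial input. By Lemma~\ref{lem:interval}, every equivalence class~$X$ of~$\equiv$ is an interval $[\min(X),\max(X)]$ of~$P$. Let $Q_{\min}$ denote the set of all minima $\min(X)$ as $X$ ranges over the equivalence classes, viewed as an induced subposet of~$P$, and define the candidate map $\varphi\colon P/{\equiv}\to Q_{\min}$ by $\varphi(X):=\min(X)$. This is clearly a bijection, since the equivalence classes partition~$P$ and each is determined by its minimum. The whole content is to show that $\varphi$ and its inverse are order-preserving, i.e.\ that for equivalence classes $X,Y$ we have $X<Y$ in $P/{\equiv}$ if and only if $\min(X)<\min(Y)$ in~$P$. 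The argument for $Q_{\max}$ (the maxima) is entirely symmetric, using the maps $X\mapsto\max(X)$, so I would state it in one direction and remark that the dual works the same way.

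For the forward direction, suppose $X<Y$ in $P/{\equiv}$. By definition there are $x\in X$ and $y\in Y$ with $x<y$ in~$P$. Since $\min(X)\le x$ and $y\le\max(Y)$, we get $\min(X)<\max(Y)$; but I actually want $\min(X)<\min(Y)$, which is slightly more delicate. Here I would use the standard fact (a consequence of the congruence property, and the reason Lemma~\ref{lem:interval} holds) that the projection $\pi_\downarrow\colon P\to P$ sending each element to the minimum of its class is order-preserving: if $a\le b$ then $\pi_\downarrow(a)\le\pi_\downarrow(b)$. Applying this to $x<y$ gives $\min(X)=\pi_\downarrow(x)\le\pi_\downarrow(y)=\min(Y)$, and since $X\ne Y$ these minima are distinct, so $\min(X)<\min(Y)$. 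Conversely, if $\min(X)<\min(Y)$ in~$P$, then taking $x:=\min(X)\in X$ and $y:=\min(Y)\in Y$ witnesses $X<Y$ in the quotient directly from the definition. Thus $\varphi$ is an isomorphism onto~$Q_{\min}$.

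The main obstacle is justifying that the down-projection $\pi_\downarrow$ is order-preserving (equivalently, monotone), which is the one place the lattice-congruence hypothesis is genuinely used rather than just the interval structure. If the excerpt's framework does not already make this available, I would prove it in a line or two: for $a\le b$ one has $a\vee b=b$, hence $\pi_\downarrow(a)\vee\pi_\downarrow(b)$ lies in the class of $a\vee b=b$, i.e.\ in $X(b)$; combined with $\pi_\downarrow(a)\le\pi_\downarrow(a)\vee\pi_\downarrow(b)\le\max X(b)$ and the fact that $\pi_\downarrow(a)\vee\pi_\downarrow(b)\ge\pi_\downarrow(b)$, a short comparison inside the interval $X(b)=[\pi_\downarrow(b),\max X(b)]$ forces $\pi_\downarrow(a)\le\pi_\downarrow(b)$. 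One should also double-check the harmless edge case where $X$ and $Y$ are incomparable but $\min(X)$ and $\min(Y)$ happen to be comparable in~$P$ — the forward direction above rules this out precisely because $\pi_\downarrow(\min(Y))=\min(Y)$, so $\min(X)<\min(Y)$ would already yield $X<Y$. With these pieces in place the proof is essentially a bookkeeping exercise, and the dual statement for maxima needs no separate work.
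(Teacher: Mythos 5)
Your overall strategy is the same as the paper's: the whole content is to show that the projection $X\mapsto\min(X)$ (dually $X\mapsto\max(X)$) is an order isomorphism onto its image, and the paper likewise says that the key step is to prove $x\in X$, $y\in Y$, $x<y$ implies $\min(X)<\min(Y)$ and $\max(X)<\max(Y)$.  Your reduction to monotonicity of the down-projection $\pi_\downarrow$ is exactly right, and the bijection and the backward direction ($\min(X)<\min(Y)\Rightarrow X<Y$ by exhibiting witnesses) are fine.

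However, the one place you flagged as ``the main obstacle'' --- monotonicity of $\pi_\downarrow$ --- is where your sketch actually fails. You argue: for $a\le b$, $\pi_\downarrow(a)\vee\pi_\downarrow(b)$ lies in $X(b)=[\pi_\downarrow(b),\max X(b)]$, and then claim ``a short comparison inside the interval $X(b)$'' forces $\pi_\downarrow(a)\le\pi_\downarrow(b)$. But the facts you list ($\pi_\downarrow(a)\le\pi_\downarrow(a)\vee\pi_\downarrow(b)\le\max X(b)$ and $\pi_\downarrow(b)\le\pi_\downarrow(a)\vee\pi_\downarrow(b)$) only say that both $\pi_\downarrow(a)$ and $\pi_\downarrow(b)$ sit below the join --- that is always true and gives no comparison between them. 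The element $\pi_\downarrow(a)$ need not lie in $X(b)$, so its position relative to the bottom of $X(b)$ is not constrained by what you wrote. The argument should use the \emph{meet} rather than the join: from $a\le b$ we get $a\wedge b=a$, so $\pi_\downarrow(a)\wedge\pi_\downarrow(b)\equiv a\wedge b=a$, hence $\pi_\downarrow(a)\wedge\pi_\downarrow(b)\in X(a)$; since $\pi_\downarrow(a)=\min X(a)$ we get $\pi_\downarrow(a)\le\pi_\downarrow(a)\wedge\pi_\downarrow(b)\le\pi_\downarrow(b)$. (Symmetrically, monotonicity of the up-projection uses $a\vee b=b$ and that $\pi_\uparrow(b)=\max X(b)$.) With that one-line repair the proof goes through.
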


Lemma~\ref{lem:interval} follows immediately from the definition of lattice congruence.
\iffull
Lemma~\ref{lem:subposet} has appeared in many previous papers, see e.g.~\cite{MR911564,MR1484432,MR1618652,MR1902662}.
\fi
\ifnotfull
Lemma~\ref{lem:subposet} is also well-known.
\fi
It can be proved by showing that given two equivalence classes~$X$ and~$Y$ of~$\equiv$ and two elements $x\in X$, $y\in Y$ with $x\lessdot y$, then we have $\min(X)<\min(Y)$ and $\max(X)<\max(Y)$.

Recall that the weak order on~$S_n$ is the order given by inclusion of inversion sets.
Note that the cover relations in this poset are exactly adjacent transpositions, i.e., swaps of two entries at neighboring positions in the permutation.
Observe also that the inversion set of the join~$\pi\vee\rho$ of two permutations~$\pi$ and~$\rho$ is given by the transitive closure of~$\inv(\pi)\cup \inv(\rho)$, and the inversion set of the meet can be computed similarly by considering the reverse permutations (which have the complementary inversion set).
In the weak order on~$S_n$, if two permutations~$\pi$ and~$\rho$ differ by transposing $a$ and~$b$, then we refer to the corresponding cover edge as an \emph{$(a,b)$-edge}, and if $\pi\equiv \rho$ then we refer to it as an \emph{$(a,b)$-bar}.
Bars are drawn with bold edges in all our figures.
The cover edges involving a fixed permutation $\pi=a_1\cdots a_n$ can be described more precisely by considering all \emph{ascents} of~$\pi$, i.e., all pairs~$(a_i,a_{i+1})$ with~$a_i<a_{i+1}$ and all \emph{descents} of~$\pi$, i.e., all pairs~$(a_i,a_{i+1})$ with $a_i>a_{i+1}$.
Specifically, for fixed~$\pi$, all cover edges $\pi\lessdot\rho$ are given by transposing the ascents of~$\pi$, and all cover edges $\pi\gtrdot\rho$ are given by transposing the descents of~$\pi$.
We let $\asc(\pi)$ and $\desc(\pi)$ denote the number of ascents and descents of~$\pi$, respectively.

\subsection{Combinatorics of lattice congruences of the weak order}
\label{sec:comb}

In the following discussion of lattice congruences of the weak order, we borrow some of the terminology and notation introduced by Reading~\cite{MR2037526,MR3335492}; see also his surveys~\cite{MR3221544, MR3645056, MR3645055}.

It is clear from the definition of lattice congruence, that if certain permutations are equivalent, this also forces other permutations to be equivalent.
These relations on the cover edges are expressed by \emph{forcing constraints}.
The two forcing constraints that are relevant for us are shown in Figure~\ref{fig:forcing}.
We refer to them as type~i and type~ii constraints, shown on the left and right of the figure, respectively.
A type~i constraint involves four permutations $\pi,\rho,\pi',\rho'$ satisfying $\pi\lessdot\rho\lessdot\rho'$ and $\pi\lessdot\pi'\lessdot\rho'$ that differ in adjacent transpositions of two values~$a,b$ or two values~$c,d$ with~$a<b$ and~$c<d$, as shown in the figure.
This constraint expresses that~$\pi\equiv\rho$ if and only if~$\pi'\equiv \rho'$, i.e., either both $(a,b)$-edges $(\pi,\rho)$ and~$(\pi',\rho')$ are bars or both are non-bars.
A type~ii constraint involves six permutations $\pi,\rho,\pi',\rho',\sigma,\tau$ satisfying $\pi\lessdot\rho\lessdot\tau\lessdot\rho'$ and $\pi\lessdot\sigma\lessdot\pi'\lessdot\rho'$ that differ in three adjacent values~$a,b,c$ with~$a<b<c$, as shown in the figure.
This constraint expresses that~$\pi\equiv\rho$ if and only if~$\pi'\equiv \rho'$, and moreover these conditions imply~$\sigma\equiv \pi'$ and~$\tau\equiv\rho$ (but not the converse), i.e., the first two $(a,b)$-edges are both either bars or non-bars, and in the first case they also force the latter two $(a,c)$-edges to be bars.
Note that both constraints follow immediately from the definition of lattice congruence, and that they are meant to capture also the symmetric situation obtained by reversing all permutations involved in Figure~\ref{fig:forcing}.

\begin{figure}
\includegraphics{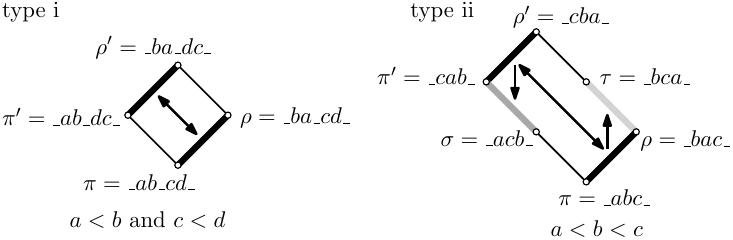}
\caption{Forcing constraints in a lattice congruence of the weak order.
Bold edges indicate bars, i.e., pairs of permutations that differ in an adjacent transposition and that belong to the same equivalence class.}
\label{fig:forcing}
\end{figure}

We now consider maximal sets of cover edges that are either all bars or all non-bars in any lattice congruence.
Given an $(a,b)$-bar, then type~i constraints allow us to reorder the values to the left or right of~$a$ and~$b$ in the corresponding permutations arbitrarily.
Moreover, given an $(a,b)$-bar, then type~ii constraints allow us to move any value that is larger or smaller than~$a$ and~$b$ to the left or right of them.
Consequently, a maximal set of mutually forcing bars is characterized by the pair~$(a,b)$, and by the values that are strictly between~$a$ and~$b$ and to the left of them.
This motivates the following definition:
Given a triple~$(a,b,L)$ with $1\leq a<b\leq n$ and~$L\seq \left]a,b\right[$, the \emph{fence $f(a,b,L)$} is the set of all $(a,b)$-edges, where the values in~$L$ are to the left of~$a$ and~$b$ in the corresponding permutations, the values in~$\left]a,b\right[\setminus L$ are to the right of~$a$ and~$b$, and the position of the remaining values $[n]\setminus [a,b]$ is arbitrary.
Note that the edges of any fence form a matching in the cover graph.
For instance, for~$n=4$ the fence~$f(2,4,\{3\})$ contains the $(2,4)$-edges $(3241,3421)$, $(1324,1342)$, and~$(3124,3142)$ that are mutually forcing bars; see Figure~\ref{fig:fences}.
In the figure, we visualize fences by an \emph{arc diagram}, which consists of a vertical sequence of $n$~points labeled $1,\ldots,n$ from bottom to top, and for every fence~$f(a,b,L)$ there is an arc joining the $a$th and $b$th point, with the points in~$L$ left of the arc, and the points in $\left]a,b\right[\setminus L$ right of the arc.
We let
\begin{equation*}
F_n:=\big\{f(a,b,L)\mid 1\leq a<b\leq n\text{ and } L\seq \left]a,b\right[\big\}
\end{equation*}
denote the set of all fences.

\begin{figure}
\includegraphics{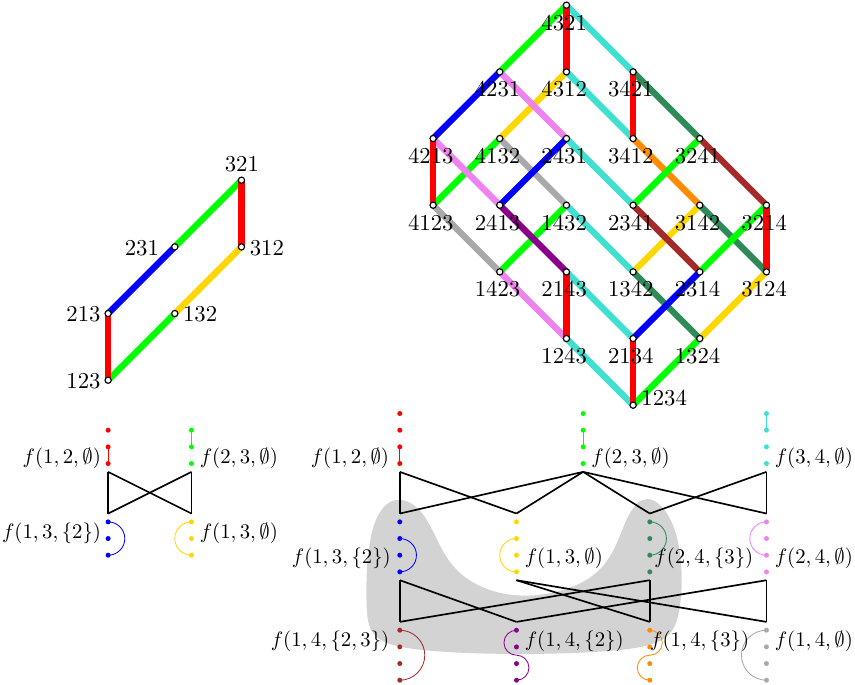}
\caption{Illustration of fences and the forcing order for~$n=3$ (left) and~$n=4$ (right).
Cover edges of the same fence are drawn in the same color.
The highlighted region shows a downset in the forcing order, corresponding to the lattice quotient in Figure~\ref{fig:cong}.
}
\label{fig:fences}
\end{figure}

The (non-mutual) forcing constraints between fences induced by type~ii constraints yield a partial order on~$F_n$, called the \emph{forcing order}.
Specifically, two fences~$f(a,b,L)$ and~$f(c,d,M)$ satisfy $f(a,b,L)\prec f(c,d,M)$ in the forcing order, if $a\leq c<d\leq b$, $(a,b) \neq (c,d)$, and $M=L\cap \left]c,d\right[$.
Note that two such fences form a cover relation in the forcing order if and only if $(c,d)=(a+1,b)$ or $(c,d)=(a,b-1)$.
Consequently, every non-maximal fence~$f(a,b,L)$ is covered by two other fences, and every non-minimal fence~$f(a,b,L)$ covers two fences if either~$a=0$ or~$b=n$, and four fences if~$0<a<b<n$.
The interpretation is that if $f(a,b,L)\prec f(c,d,M)$, then the bars of the fence~$f(c,d,M)$ force the bars of the fence~$f(a,b,L)$, i.e., forcing goes downward in the forcing order.
For example, we have $f(1,4,\{2,3\})\prec f(2,4,\{3\})$, i.e., the three bars $(3241,3421)$, $(1324,1342)$, and~$(3124,3142)$ from before force the two bars~$(2314,2341)$ and~$(3214,3241)$.

\begin{theorem}[{\cite[Section~10-5]{MR3645056}}]
\label{thm:reading}
For every lattice congruence~$\equiv$ of the weak order on~$S_n$, there is a subset of fences $F_\equiv\seq F_n$ such that in each equivalence class of~$\equiv$, all cover edges are a bar from a fence in~$F_\equiv$, and all other cover edges are not in any fence from~$F_\equiv$.
Moreover, $F_\equiv$ is a downset of the forcing order~$\prec$ and the map $\equiv{}\mapsto F_\equiv$ is a bijection between the lattice congruences of the weak order on~$S_n$ and the downsets of the forcing order~$\prec$.
\end{theorem}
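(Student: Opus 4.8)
The plan is to deduce the statement from Birkhoff's representation theorem for finite distributive lattices together with Reading's description of the two-dimensional intervals of the weak order. Let $\mathrm{Con}(S_n)$ denote the lattice of all lattice congruences of the weak order on $S_n$, ordered by refinement; it is a classical fact that the congruence lattice of any finite lattice is finite and distributive. By Birkhoff's theorem, $\mathrm{Con}(S_n)$ is then isomorphic to the lattice of downsets of its poset $J$ of join-irreducible elements. For a cover edge $e=(x,y)$ with $x\lessdot y$ let $\mathrm{cg}(e)$ be the finest congruence contracting $e$, and let us call $e$ \emph{forced by} $e'$ if $\mathrm{cg}(e)\le\mathrm{cg}(e')$, i.e.\ if $e$ is a bar in every congruence in which $e'$ is a bar. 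Using the well-known fact that the weak order on $S_n$ is congruence uniform, the elements of $J$ are exactly the congruences $\mathrm{cg}(\rho_\ast,\rho)$ where $\rho$ runs over the join-irreducible permutations of the weak order -- the permutations with a single descent -- and $\rho_\ast$ is the unique down-neighbor of $\rho$; the partial order on $J$ is the forcing order just described. Thus it suffices to exhibit an order-isomorphism between $J$ and $(F_n,\prec)$, and then to rephrase the correspondence in terms of bars.

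First I would record the underlying bijection between single-descent permutations and fences: given a fence $f(a,b,L)$ there is a unique permutation $\rho$ having a single descent, namely a descent transposing $b$ and $a$ with the values of $\left]a,b\right[$ distributed according to $L$; explicitly, $\rho$ lists $\{v:v<a\}\cup L$ in increasing order, then $b$, then $a$, then $\{v:v>b\}\cup(\left]a,b\right[\setminus L)$ in increasing order, and the requirement that $\rho$ have no other descent forces this shape. This is a bijection $F_n\to\{\text{single-descent permutations}\}$, hence, composed with $\rho\mapsto\mathrm{cg}(\rho_\ast,\rho)$, a bijection $F_n\to J$. To see that it transports $(F_n,\prec)$ to $J$, the key structural input -- which I would take from {\cite[Section~10-5]{MR3645056}} -- is that in the weak order the forcing relation on cover edges is generated by the elementary forcings carried by its squares and hexagons, that is, by the type~i and type~ii constraints of Figure~\ref{fig:forcing}. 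Granting this: a type~i step merely reorders the values on a fixed side of the transposed pair $\{a,b\}$, and the mutual half of a type~ii step merely moves across $\{a,b\}$ a value that is not strictly between $a$ and $b$; both leave the triple $(a,b,L)$ attached to a cover edge unchanged, so each mutual-forcing class of cover edges lies in a single fence, while the discussion preceding the definition of fences shows that any two $(a,b)$-edges with the same $L$ are linked by such steps -- hence each fence \emph{is} a mutual-forcing class, and $\mathrm{cg}(\cdot)$ is constant on fences. It remains to observe that the non-mutual halves of the type~ii constraints induce on the fences exactly the relations $f(a,b,L)\prec f(c,d,M)$ of the forcing order; this is a direct comparison of the hexagon in Figure~\ref{fig:forcing} (three positions occupied by values $a<b<c$) with the defining conditions $a\le c<d\le b$, $(a,b)\neq(c,d)$, $M=L\cap\left]c,d\right[$. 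Together these give $J\cong(F_n,\prec)$.

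Finally I would translate back. Composing the two isomorphisms, $\equiv\mapsto F_\equiv$ -- with $F_\equiv$ the downset of $\prec$ corresponding to $\equiv$ -- is a bijection from $\mathrm{Con}(S_n)$ onto the downsets of $\prec$, and $F_\equiv$ is a downset precisely because forcing goes downward in $\prec$. For the bar/non-bar dichotomy, note that the fences partition the cover edges: a cover edge $\pi\lessdot\rho$ transposing two values $a<b$ lies in the unique fence $f(a,b,L)$ with $L$ the set of values in $\left]a,b\right[$ occurring to the left of $a$ in $\pi$. Since each fence is a mutual-forcing class, the set of bars of $\equiv$ is a union of fences, and under the isomorphism it is exactly the union of the fences belonging to $F_\equiv$; equivalently, inside every equivalence class of $\equiv$ each cover edge is a bar lying in a fence of $F_\equiv$, and every remaining cover edge lies in no fence of $F_\equiv$. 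The only genuinely nontrivial ingredient here is the polygon-generation statement for the forcing relation on the weak order (together with its congruence uniformity), which I would simply quote from Reading; the rest is the combinatorial bookkeeping sketched above.
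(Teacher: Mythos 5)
The paper does not prove Theorem~\ref{thm:reading}; the statement is quoted directly from Reading's survey~\cite{MR3645056}, so there is no in-paper proof to compare against. Your proposal correctly reconstructs the standard argument behind that citation: Funayama--Nakayama distributivity of the congruence lattice, Birkhoff's representation theorem, congruence uniformity of the weak order (so that join-irreducible congruences biject with join-irreducible permutations, i.e.\ permutations with exactly one descent), an explicit bijection between such permutations and fences, and the polygon generation of forcing. Your bookkeeping that type~i and mutual type~ii constraints fix the triple $(a,b,L)$, so that fences are precisely the mutual-forcing classes of cover edges, while the non-mutual type~ii constraints yield exactly the relations $f(a,b,L)\prec f(c,d,M)$ of the paper's forcing order, is sound; your closed formula for the single-descent permutation attached to a fence is also correct. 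You appropriately identify congruence uniformity and the square/hexagon generation of forcing as the two non-elementary inputs to be quoted from Reading; with those granted, the rest of the argument is complete.
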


From now on we use $F_\equiv$ as the set of fences corresponding to a lattice congruence~$\equiv$ given by Theorem~\ref{thm:reading}.
The downset $F_\equiv$ describes exactly all the cover edges that are contracted to obtain the lattice quotient~$S_n/{\equiv}$.
Equivalently, the upset $F_n\setminus F_\equiv$ describes all cover edges that are not contracted in the quotient.

In the dual setting of hyperplane arrangements considered in~\cite{MR2037526,MR3964495}, the dual of a fence is called a shard.
Moreover, these authors represent a lattice congruence~$\equiv$ not by the set of fences~$F_\equiv$ that contains all cover edges that are contracted to obtain the lattice quotient~$S_n/{\equiv}$, but by the set $F_n\setminus F_\equiv$ of cover edges that are not contracted in the quotient.
The latter representation allows describing each equivalence class by a non-crossing arc diagram that contains only arcs corresponding to fences from~$F_n\setminus F_\equiv$~\cite{MR3335492}.
On the other hand, our representation makes the characterization of congruences with regular and vertex-transitive quotient graphs in Section~\ref{sec:reg} somewhat more natural.

We may order all downsets of the forcing order by inclusion, yielding another lattice; see Figure~\ref{fig:arcs}.
By Theorem~\ref{thm:reading}, this corresponds to ordering all lattice congruences of the weak order on~$S_n$ by refinement.
The finest lattice congruence~$\equiv$ does not use any fences $F_\equiv=\emptyset$, and corresponds to the set of all permutations~$S_n$, and the coarsest lattice congruence~$\equiv$ uses all fences~$F_\equiv=F_n$, and corresponds to contracting all permutations into a single equivalence class.

\begin{figure}
\includegraphics[width=\textwidth]{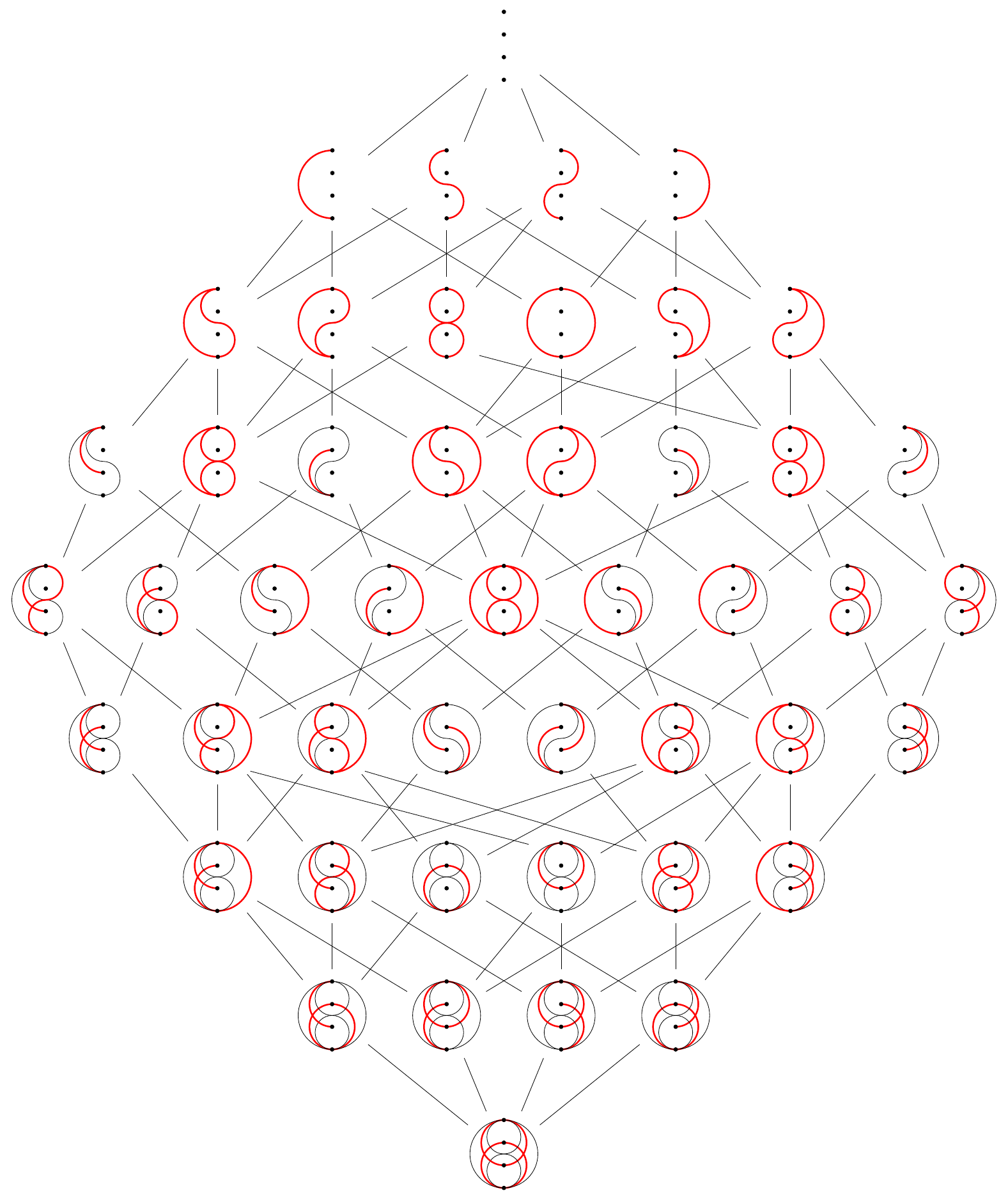}
\caption{Lattice of congruences of the weak order on~$S_n$ for~$n=4$, represented by downsets of the forcing order.
Each downset of fences is represented by an arc diagram containing all the corresponding arcs, where the arcs corresponding to maximal fences of the downset are highlighted.
The figure shows only downsets not containing any non-essential fences~$f(a,a+1,\emptyset)$, $a\in[n-1]$, as otherwise the congruence is equivalent to a lower-dimensional one (see Lemma~\ref{lem:dim} below).
}
\label{fig:arcs}
\end{figure}

\subsection{Restrictions, rails, ladders, and projections}

Given a lattice congruence~$\equiv$ of the weak order on~$S_n$, the \emph{restriction of~$\equiv$}, denoted $\equiv^*$, is the relation on~$S_{n-1}$ induced by all permutations that have the largest value~$n$ at the last position, i.e., it is the set of all pairs~$(\pi,\rho)$ with $\pi,\rho\in S_{n-1}$ for which $c_n(\pi)\equiv c_n(\rho)$.

\begin{lemma}
\label{lem:restrict}
For every lattice congruence~$\equiv$ of the weak order on~$S_n$, the restriction~$\equiv^*$ is a lattice congruence on~$S_{n-1}$.
\end{lemma}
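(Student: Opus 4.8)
The plan is to verify directly that $\equiv^*$ respects joins and meets in the weak order on $S_{n-1}$. The key observation will be that the map $c_n \colon S_{n-1} \to S_n$ sending $\pi$ to $c_n(\pi) = \pi\, n$ (appending the largest value at the end) is a lattice embedding, and moreover its image $\{c_n(\pi) \mid \pi \in S_{n-1}\}$ is a \emph{sublattice} of the weak order on $S_n$, i.e., closed under the joins and meets of $S_n$. Indeed, $\inv(c_n(\pi)) = \inv(\pi)$ under the obvious identification of decreasing pairs, since the appended value $n$ forms no inversion with anything. Hence for $\pi,\rho \in S_{n-1}$ we have $\inv(c_n(\pi)) \cup \inv(c_n(\rho)) = \inv(\pi) \cup \inv(\rho)$, and since the transitive closure of a set of pairs drawn from $]0,n[$ introduces no pair involving $n$, the join $c_n(\pi) \vee c_n(\rho)$ in $S_n$ again has $n$ at the last position and equals $c_n(\pi \vee \rho)$. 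The analogous statement for meets follows by the reverse-permutation argument recalled in the excerpt (reversal turns $c_n(\pi)$ into $n$ followed by the reverse of $\pi$, and one checks the same closure; alternatively meets are joins in the order-dual, and $c_n$ is compatible with both orders). So $c_n$ is an isomorphism from the weak order on $S_{n-1}$ onto the sublattice $c_n(S_{n-1})$ of the weak order on $S_n$, commuting with $\vee$ and $\wedge$.

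Granting this, the lemma is immediate. Suppose $\pi \equiv^* \pi'$ and $\rho \equiv^* \rho'$ in $S_{n-1}$; by definition this means $c_n(\pi) \equiv c_n(\pi')$ and $c_n(\rho) \equiv c_n(\rho')$ in $S_n$. Since $\equiv$ is a lattice congruence of the weak order on $S_n$, we get $c_n(\pi) \vee c_n(\rho) \equiv c_n(\pi') \vee c_n(\rho')$ and likewise for meets. By the sublattice property, the left side is $c_n(\pi \vee \rho)$ and the right side is $c_n(\pi' \vee \rho')$, so $c_n(\pi \vee \rho) \equiv c_n(\pi' \vee \rho')$, which by definition of $\equiv^*$ says $\pi \vee \rho \equiv^* \pi' \vee \rho'$; symmetrically $\pi \wedge \rho \equiv^* \pi' \wedge \rho'$. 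One should also note in passing that $\equiv^*$ is genuinely an equivalence relation on all of $S_{n-1}$ (reflexivity, symmetry, transitivity transfer from $\equiv$ via the injection $c_n$), so that it is a well-defined candidate for a lattice congruence in the first place.

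The one step deserving care — and the only plausible obstacle — is the claim that $c_n(S_{n-1})$ is closed under meets in $S_n$, since the meet is computed via reversal and reversal does \emph{not} send $c_n(S_{n-1})$ to itself (it sends it to the set of permutations with $n$ in the first position). The clean way around this: the order-reversal anti-automorphism $w \mapsto \overline{w}$ of the weak order on $S_n$ restricts to the analogous anti-automorphism on $S_{n-1}$, and intertwines $c_n$ with $c_1$ (appending $n$ at the end versus prepending it). One then proves the dual closure statement — that $c_1(S_{n-1}) = \{n\,\sigma \mid \sigma \in S_{n-1}\}$ is closed under joins in $S_n$ and that $c_1$ commutes with joins — by the same inversion-set computation (now the prepended $n$ forms an inversion with \emph{everything}, and taking transitive closure of $\inv(\pi)\cup\inv(\rho)$ together with all pairs $(n,\cdot)$ again yields a set with $n$ first). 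Transporting this back through reversal gives the meet closure for $c_n$. Alternatively one can avoid reversal entirely by arguing directly that among all $w \in S_n$ with $w \leq c_n(\pi)$ and $w \leq c_n(\rho)$ in the weak order, the one with $n$ at the last position, namely $c_n(\pi \wedge \rho)$, is the largest — but the reversal argument is shorter and reuses exactly the fact already quoted in the excerpt about computing meets from reversed permutations.
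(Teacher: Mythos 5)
Your proof is correct and follows essentially the same route as the paper: establish that $c_n$ commutes with joins and meets (the paper's displayed equation, which it states with "Clearly"), then unfold the definition of $\equiv^*$, apply the congruence property of $\equiv$, and fold back. The only difference is that you spell out the inversion-set/reversal justification of the commutation identity that the paper takes for granted.
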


\begin{proof}
Clearly, for any two permutations~$\pi,\rho\in S_{n-1}$ we have
\begin{equation}
\label{eq:commute}
c_n(\pi)\vee c_n(\rho)=c_n(\pi\vee\rho) \quad \text{and} \quad c_n(\pi)\wedge c_n(\rho)=c_n(\pi\wedge\rho).
\end{equation}
Now consider four permutations $\pi, \pi', \rho, \rho'\in S_{n-1}$ satisfying $\pi \equiv^* \pi'$ and $\rho \equiv^* \rho'$.
From the definition of restriction, we have $c_n(\pi) \equiv c_n(\pi')$ and $c_n(\rho) \equiv c_n(\rho')$.
Applying the definition of lattice congruence to~$\equiv$, we obtain that $c_n(\pi) \vee c_n(\rho) \equiv c_n(\pi') \vee c_n(\rho')$ and $c_n(\pi) \wedge c_n(\rho) \equiv c_n(\pi') \wedge c_n(\rho')$.
Applying~\eqref{eq:commute} to these relations yields $c_n(\pi \vee \rho) \equiv c_n(\pi' \vee \rho')$ and $c_n(\pi \wedge \rho) \equiv c_n(\pi' \wedge \rho')$, from which we obtain $\pi \vee \rho \equiv^* \pi' \vee \rho'$ and $\pi \wedge \rho \equiv^* \pi' \wedge \rho'$ with the definition of restriction.
This proves the lemma.
\end{proof}

\begin{figure}
\includegraphics{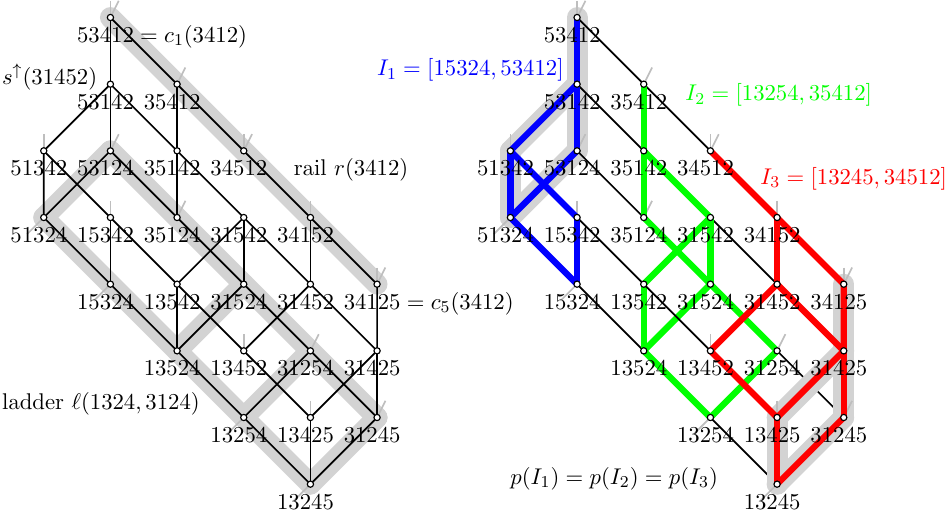}
\caption{Illustration of rails, ladders, and projections.
The figure shows only a subset of permutations from~$S_5$.
The equivalence classes on the right are shown as intervals.
}
\label{fig:ladder}
\end{figure}

The following definitions are illustrated in Figure~\ref{fig:ladder}.
Recall that for any permutation~$\pi\in S_{n-1}$ and for any~$1\leq i\leq n$, the permutation~$c_i(n)$ is obtained from~$\pi$ by inserting the largest value~$n$ at position~$i$.
Given any permutation~$\pi\in S_{n-1}$, we refer to the cover edges $c_n(\pi)\lessdot c_{n-1}(\pi)\lessdot\cdots\lessdot c_1(\pi)$ in~$S_n$ as the \emph{rail}~$r(\pi)$.
Given two permutations $\pi,\rho\in S_{n-1}$ with~$\pi\lessdot\rho$, we refer to the cover edges of the weak order induced by the permutations on the rails of~$\pi$ and~$\rho$ as the \emph{ladder}~$\ell(\pi,\rho)$.
Let~$k$ and~$k+1$ be the positions in which~$\pi$ and~$\rho$ differ.
Note that the ladder~$\ell(\pi,\rho)$ has exactly all cover edges of the rails, plus the cover edges $c_i(\pi)\lessdot c_i(\rho)$ for all~$1\leq i\leq n$ except for~$i=k+1$, which are referred to as the \emph{stairs} of the ladder.
We see that the cover graph of the weak order on~$S_n$ has the following recursive structure: It is the union of all ladders~$\ell(\pi,\rho)$ obtained from all cover edges~$\pi\lessdot\rho$ with~$\pi,\rho\in S_{n-1}$.

\begin{lemma}
\label{lem:collapse}
For every lattice congruence~$\equiv$ of the weak order on~$S_n$, the following three statements are equivalent:
\begin{enumerate}[label=(\roman*), leftmargin=8mm, noitemsep, topsep=3pt plus 3pt]
\item $\ide_n\equiv c_{n-1}(\ide_{n-1})$, i.e., the identity permutation and the one obtained from it by transposing the last two entries form a bar.
\item There is a permutation~$\pi\in S_{n-1}$ such that for all $1\leq i<n$ we have $c_i(\pi)\equiv c_{i+1}(\pi)$, i.e., the rail~$r(\pi)$ consists entirely of bars.
\item For all permutations~$\pi\in S_{n-1}$ and all $1\leq i<n$ we have $c_i(\pi)\equiv c_{i+1}(\pi)$, i.e., all rails~$r(\pi)$ consist entirely of bars.
\end{enumerate}
\end{lemma}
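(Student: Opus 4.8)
The plan is to show that each of the three statements is equivalent to the single condition $f(n-1,n,\emptyset)\in F_\equiv$, where $F_\equiv\seq F_n$ is the downset of fences assigned to~$\equiv$ by Theorem~\ref{thm:reading}; recall from that theorem that a cover edge of the weak order is a bar if and only if the unique fence containing it belongs to~$F_\equiv$.

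First I would record two facts about rails. For $\pi=a_1\cdots a_{n-1}\in S_{n-1}$, the rail edge $c_{i+1}(\pi)\lessdot c_i(\pi)$ is obtained by transposing the two values $a_i$ and~$n$ occupying positions $i$ and~$i+1$, so it is an $(a_i,n)$-edge; as $i$ runs over $[n-1]$ the values~$a_i$ run over all of~$[n-1]$, hence every rail $r(\pi)$ contains exactly one $(a,n)$-edge for each $a\in[n-1]$, and conversely every $(a,n)$-edge occurs in some rail (delete the value~$n$ from its two endpoints). In particular the edge in~(i), which is $\ide_n=c_n(\ide_{n-1})\lessdot c_{n-1}(\ide_{n-1})$, is an $(n-1,n)$-edge. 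Since $\left]n-1,n\right[=\emptyset$, there is only one fence of the form $f(n-1,n,L)$, namely $f(n-1,n,\emptyset)$, and every $(n-1,n)$-edge lies in it; combined with Theorem~\ref{thm:reading}, this already shows that the assertions ``the edge in~(i) is a bar'', ``some $(n-1,n)$-edge is a bar'', and ``$f(n-1,n,\emptyset)\in F_\equiv$'' are mutually equivalent.

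Next I would compute the principal downset of $f(n-1,n,\emptyset)$ in the forcing order. Unwinding the definition of~$\prec$, and using that $\left]n-1,n\right[=\emptyset$ renders the intersection condition vacuous, one finds $f(a,n,L)\prec f(n-1,n,\emptyset)$ for all $1\le a\le n-2$ and all $L\seq\left]a,n\right[$, and that these fences together with $f(n-1,n,\emptyset)$ itself are exactly the fences that are $\preceq f(n-1,n,\emptyset)$. The union of all these fences is precisely the set of all $(a,n)$-edges with $a\in[n-1]$, which by the previous paragraph equals the set of all rail edges. Since $F_\equiv$ is a downset, $f(n-1,n,\emptyset)\in F_\equiv$ implies $f(a,n,L)\in F_\equiv$ for all such $a$ and~$L$, hence all $(a,n)$-edges, i.e.\ all rail edges, are bars; the reverse implication is trivial. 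Thus $f(n-1,n,\emptyset)\in F_\equiv$ holds if and only if~(iii) holds.

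Finally I would assemble the pieces: (i) $\Leftrightarrow$ $f(n-1,n,\emptyset)\in F_\equiv$ $\Leftrightarrow$ (iii) by the last two paragraphs; (iii) $\Rightarrow$ (ii) is immediate since $S_{n-1}\neq\emptyset$; and (ii) $\Rightarrow$ (i) because a rail $r(\pi)$ all of whose edges are bars contains an $(n-1,n)$-bar, which forces $f(n-1,n,\emptyset)\in F_\equiv$ and hence~(i). I do not expect a genuine obstacle here; the step requiring the most care is the purely combinatorial bookkeeping that assigns to each cover edge its unique fence, together with the observation that every rail $r(\pi)$ realizes every value $a\in[n-1]$ --- this is exactly what makes the seemingly weak hypothesis~(ii) (some rail consists entirely of bars) already as strong as~(iii) (all rails consist entirely of bars).
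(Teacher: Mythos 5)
Your proof is correct and follows essentially the same route as the paper's: reduce everything to the membership of $f(n-1,n,\emptyset)$ in the downset $F_\equiv$ from Theorem~\ref{thm:reading}, compute its principal downset in the forcing order as $\{f(a,n,L): 1\le a\le n-1,\ L\subseteq\left]a,n\right[\}$, and observe that every rail edge is an $(a,n)$-edge and hence covered. You spell out the bookkeeping (each cover edge lies in a unique fence; each rail hits every value $a\in[n-1]$) a bit more explicitly than the paper, but the decomposition and key ingredients are identical.
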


\begin{proof}
Clearly, (iii) implies~(ii) and (iii) implies~(i), so it suffices to prove that (ii) implies~(iii) and that (i) implies~(iii).
We prove this by showing that if there is an $(n-1,n)$-bar in~$S_n$, then (iii) follows.
If there is an $(n-1,n)$-bar, this means that the fence $f(n-1,n,\emptyset)$ is in~$F_\equiv$.
However, as $F_\equiv$ is a downset in the forcing order (recall Theorem~\ref{thm:reading}), it follows that all fences~$f(a,n,L)$ with $1\leq a\leq n-1$ and an arbitrary subset $L\seq \left]a,n\right[$ are also in~$F_\equiv$.
For any~$\pi=a_1\cdots a_{n-1}\in S_{n-1}$, the $i$th edge along the rail $r(\pi)=c_n(\pi)\lessdot c_{n-1}(\pi)\lessdot\cdots\lessdot c_1(\pi)$ is an $(a_{n-i},n)$-edge, so it is a bar regardless of the values of $a_1\cdots a_{n-i}$.
\end{proof}

Combining Lemmas~\ref{lem:interval} and~\ref{lem:collapse} yields the following lemma.

\begin{lemma}
\label{lem:rail}
Let $\equiv$ be an equivalence relation of the weak order on~$S_n$ with $\ide_n\not\equiv c_{n-1}(\ide_{n-1})$.
Then for every rail~$r(\pi)$, $\pi\in S_{n-1}$, and every equivalence class $X\in S_n/{\equiv}$ we have that $X\cap r(\pi)$ is an interval of~$r(\pi)$.
Moreover, there are two distinct equivalence classes~$X$ and~$Y$ containing the first and last permutation of the rail, i.e., $c_n(\pi)\in X$ and~$c_1(\pi)\in Y$.
\end{lemma}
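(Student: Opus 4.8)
The plan is to obtain both parts directly from Lemma~\ref{lem:interval} and Lemma~\ref{lem:collapse}, viewing a rail $r(\pi)=\big(c_n(\pi)\lessdot c_{n-1}(\pi)\lessdot\cdots\lessdot c_1(\pi)\big)$ as a chain (on its vertex set) in the weak order, with bottom element $c_n(\pi)$ and top element $c_1(\pi)$, and noting that $i\mapsto c_i(\pi)$ is order-reversing in $i$.

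For the first assertion I would argue as follows. By Lemma~\ref{lem:interval}, every equivalence class $X$ of $\equiv$ equals the interval $[\min(X),\max(X)]$ of the weak order. Hence, if $c_j(\pi),c_k(\pi)\in X$ with $j\le k$, then for every index $i$ with $j\le i\le k$ we have $\min(X)\le c_k(\pi)\le c_i(\pi)\le c_j(\pi)\le\max(X)$ in the weak order, and therefore $c_i(\pi)\in X$. Thus $X\cap r(\pi)$ is a contiguous stretch of the rail, i.e., an interval of $r(\pi)$ (the empty intersection being a degenerate case). The only bookkeeping here is the passage between an interval of the poset $(S_n,<)$ and an interval of the chain $r(\pi)$, which is exactly the monotonicity just used.

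For the second assertion I would proceed by contradiction: suppose $c_n(\pi)\equiv c_1(\pi)$, say both lie in the class $X$. By the first part, $X\cap r(\pi)$ is an interval of $r(\pi)$ containing both endpoints of the rail, hence $X\cap r(\pi)=r(\pi)$; equivalently, $c_i(\pi)\equiv c_{i+1}(\pi)$ for all $1\le i<n$, so the rail $r(\pi)$ consists entirely of bars. This is precisely statement~(ii) of Lemma~\ref{lem:collapse}, which is equivalent there to statement~(i), namely $\ide_n\equiv c_{n-1}(\ide_{n-1})$; this contradicts the standing hypothesis $\ide_n\not\equiv c_{n-1}(\ide_{n-1})$. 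Therefore $c_n(\pi)\not\equiv c_1(\pi)$, and we may take $X$ to be the equivalence class of $c_n(\pi)$ and $Y\ne X$ the equivalence class of $c_1(\pi)$.

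I do not expect a real obstacle here: both steps are short given Lemmas~\ref{lem:interval} and~\ref{lem:collapse}, and the substantive work (congruence classes being intervals, and an $(n-1,n)$-bar forcing all rails to collapse) has already been done. The only points needing mild care are the identification of an $S_n$-interval restricted to a rail with an interval of that rail, and checking that ``the whole rail is a sequence of bars'' is literally condition~(ii) of Lemma~\ref{lem:collapse}, so that its equivalence with condition~(i) can be invoked.
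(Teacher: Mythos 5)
Your proof is correct and is exactly the intended argument: the paper states Lemma~\ref{lem:rail} with only the remark "Combining Lemmas~\ref{lem:interval} and~\ref{lem:collapse} yields the following lemma," and your write-up spells out precisely how that combination works (using that $i\mapsto c_i(\pi)$ is order-reversing along the rail for the first part, and the equivalence of conditions~(i) and~(ii) of Lemma~\ref{lem:collapse} for the second).
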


Recall that for~$\pi\in S_n$, the permutation $p(\pi)\in S_{n-1}$ is obtained by removing the largest value~$n$ from~$\pi$.
Given a set of permutations $X\seq S_n$, we refer to $p(X):=\{p(\pi)\mid \pi\in X\}$ as the \emph{projection of~$X$}.
This definition and the following crucial lemma are illustrated in Figure~\ref{fig:ladder}.

\begin{lemma}
\label{lem:projection}
For every lattice congruence~$\equiv$ of the weak order on~$S_n$ and every equivalence class~$X$ of~$\equiv$, we have that the projection~$p(X)$ is an equivalence class of the restriction~$\equiv^*$.
In particular, any two equivalence classes~$X,Y$ of~$S_n/{\equiv}$ either have the same projection $p(X)=p(Y)$ or disjoint projections $p(X)\cap p(Y)=\emptyset$.
\end{lemma}

The proof of this lemma essentially proceeds by repeatedly applying the forcing constraints shown in Figure~\ref{fig:forcing} along ladders.
However, we do not apply these constraints directly, but using the fences captured by Theorem~\ref{thm:reading}.

\begin{proof}
For any~$n\geq 1$ and any permutation $\pi\in S_n$, we let~$N(\pi)$ denote the set of all permutations that differ from~$\pi$ in an adjacent transposition, i.e., all neighbors in the cover graph of~$S_n$.
Now consider a fixed lattice congruence~$\equiv$ on~$S_n$, fix an equivalence class~$X$ of~$\equiv$ and some permutation~$\pi\in X$, and consider its projection $\pi':=p(\pi)\in S_{n-1}$.
The lemma is a consequence of the following two statements:
\begin{enumerate}[label=(\roman*), leftmargin=8mm, noitemsep, topsep=3pt plus 3pt]
\item For every $\rho\in N(\pi)\seq S_n$ with~$\pi\equiv \rho$ we have that~$p(\pi)\equiv^* p(\rho)$.
\item For every $\rho'\in N(\pi')\seq S_{n-1}$ with~$\pi'\equiv^* \rho'$ there is a~$\rho\in N(\pi)$ with~$\pi\equiv \rho$ and~$p(\rho)=\rho'$, or there is a~$\sigma\in N(\pi)$ and a~$\rho\in N(\sigma)$ with~$\pi\equiv\sigma\equiv \rho$ and~$p(\pi)=p(\sigma)=\pi'$ and~$p(\rho)=\rho'$.
\end{enumerate}
In words, (i) asserts that the projection of any bar incident to~$\pi$ is a bar incident to~$\pi'$ in the restriction, and (ii) asserts that for any bar incident to~$\pi'$ in the restriction, there are one or two consecutive bars starting at~$\pi$ whose projection is this bar.

We begin proving~(i).
Let $\rho\in N(\pi)\seq S_n$ with~$\pi\equiv \rho$.
If~$\pi$ and~$\rho$ are endpoints of an $(a,n)$-bar for some~$a<n$ (i.e., this bar is part of the rail~$r(\pi')$), then we have~$p(\pi)=p(\rho)$, so trivially~$p(\pi)\equiv^* p(\rho)$.
Otherwise $\pi$ and~$\rho$ are endpoints of some $(a,b)$-bar for $a<b<n$ (i.e., this bar is a stair of some ladder), so the fence~$f(a,b,L)$ is in~$F_\equiv$, where $L$ is the set of all values from~$\left]a,b\right[$ left of~$a$ and~$b$ in~$\pi$ and~$\rho$.
By the definition of a fence, it follows that $c_n(p(\pi))\equiv c_n(p(\rho))$, i.e., the permutations obtained from~$\pi$ and~$\rho$ by moving the largest value~$n$ to the rightmost position are equivalent.
By the definition of restriction, we obtain that~$p(\pi)\equiv^* p(\rho)$, as claimed.

We now prove~(ii).
Let $\rho'\in N(\pi')\seq S_{n-1}$ with~$\pi'\equiv^* \rho'$.
Clearly, $\pi'$ and~$\rho'$ are endpoints of some $(a,b)$-bar in~$\equiv^*$ for $a<b\leq n-1$.
By the definition of restriction, it follows that $c_n(\pi')\equiv c_n(\rho')$, so $f(a,b,L)$ is a fence in~$F_\equiv$, where $L$ is the set of all values from~$\left]a,b\right[$ left of~$a$ and~$b$ in~$\pi'$ and~$\rho'$.
In the following we assume that~$\pi'\lessdot\rho'$, i.e., $\pi'$ contains the ascent~$(a,b)$, and $\rho'$ contains the descent~$(b,a)$.
Let $i$ be such that~$\pi=c_i(\pi')$, and let~$k$ be the position of~$b$ in~$\pi'$.
We now distinguish two cases.
If~$i\neq k$, then $\pi=c_i(\pi')\lessdot c_i(\rho')$ is a cover edge in~$S_n$ (it is a stair of the ladder~$\ell(\pi',\rho')$), and since it is contained in the fence~$f(a,b,L)$, we have $\pi=c_i(\pi')\equiv c_i(\rho')$, i.e., this cover edge is indeed a bar.
This means we can take $\rho:=c_i(\rho')\in N(\pi)$, which satisfies~$p(\rho)=\rho'$ by definition.
On the other hand, if~$i=k$, then~$c_i(\pi')$ and~$c_i(\rho')$ are \emph{not} endpoints of a cover edge (this is the missing stair in the ladder~$\ell(\pi',\rho')$).
However, we may take $\sigma:=c_{i-1}(\pi')\in N(\pi)$ and $\rho:=c_{i-1}(\rho')\in N(\sigma)$ (note that $i=k\geq 2$), and then~$\pi\lessdot\sigma$ is an $(a,n)$-edge, and $\sigma\lessdot\rho$ is an $(a,b)$-edge.
As $f(a,b,L)$ is a fence in~$F_\equiv$, the forcing order implies that~$f(a,n,L')$ is also a fence, where~$L'$ is defined as the set of all values from~$\left]a,n\right[$ left of~$a$ and~$n$ in~$\pi$ and~$\sigma$.
Consequently, we have $\pi\equiv\sigma\equiv \rho$, and moreover $p(\pi)=p(\sigma)=\pi'$ and~$p(\rho)=\rho'$ by the definition of~$\sigma$ and~$\rho$, i.e., these two cover edges are indeed bars.
In the remaining subcase~$\pi'\gtrdot\rho'$ we can take $\rho:=c_i(\rho')\in N(\pi)$ if~$i\neq k$, and $\sigma:=c_{i+1}(\pi')$ and $\rho:=c_{i+1}(\rho')$ if~$i=k$, and argue similarly to before.

This proves the lemma.
\end{proof}

We state the following two lemmas for further reference.
The first lemma is an immediate consequence of Lemma~\ref{lem:projection}.
For any lattice congruence~$\equiv$ of the weak order on~$S_n$ and any fence~$f(a,b,L)$ in~$F_\equiv$ with $b<n$, we let $f^*(a,b,L)$ denote the fence formed by the union of all $(a,b)$-edges in the weak order on~$S_{n-1}$ obtained by removing the largest value~$n$ from all permutations of~$f(a,b,L)$.

\begin{lemma}
\label{lem:restrict-fences}
For every lattice congruence~$\equiv$ of the weak order on~$S_n$, its restriction $\equiv^*$ satisfies $F_{\equiv^*}=\{f^*(a,b,L)\mid f(a,b,L)\in F_\equiv \text{ and } b<n\}$.
\end{lemma}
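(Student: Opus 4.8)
The plan is to reduce the asserted set identity to the single equivalence, valid for every triple $(a,b,L)$ with $1\le a<b\le n-1$ and $L\seq\left]a,b\right[$,
\[
f(a,b,L)\in F_\equiv \iff f^*(a,b,L)\in F_{\equiv^*}.
\]
Note first that every fence of the weak order on $S_{n-1}$ has the form $f(a,b,L)$ with $b\le n-1<n$, and that for such a triple the set $f^*(a,b,L)$ — obtained by deleting the value~$n$ from every permutation of the $S_n$-fence $f(a,b,L)$ — is literally the $S_{n-1}$-fence $f(a,b,L)$: since $n\notin[a,b]$, the value~$n$ is one of the freely placed values from $[n]\setminus[a,b]$, so deleting it merely turns the clause ``$[n]\setminus[a,b]$ arbitrary'' into ``$[n-1]\setminus[a,b]$ arbitrary'', leaving $(a,b)$ and $L$ untouched. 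Hence once the displayed equivalence is established, the right-hand side of the lemma is exactly the set of those $S_{n-1}$-fences $g$ with $g\in F_{\equiv^*}$, i.e., $F_{\equiv^*}$ itself.

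Second, I would record the following reformulation of Theorem~\ref{thm:reading}: the fences partition the cover edges of the weak order (a cover edge fixes the transposed pair $(a,b)$, and since $a,b$ occupy adjacent positions it also fixes $L$ as the set of values of $\left]a,b\right[$ lying before them), and by Theorem~\ref{thm:reading} a fence $g$ lies in $F_\equiv$ if and only if some (equivalently, every) one of its cover edges is a bar of $\equiv$. The same statement applies verbatim to $\equiv^*$, which is a lattice congruence of the weak order on $S_{n-1}$ by Lemma~\ref{lem:restrict}.

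With these two observations in hand the argument is short. Fix $(a,b,L)$ with $b\le n-1$ and pick any cover edge $(\pi',\rho')$ of the $S_{n-1}$-fence $f(a,b,L)=f^*(a,b,L)$, say with $\pi'\lessdot\rho'$. Appending $n$ at the last position yields the cover edge $(c_n(\pi'),c_n(\rho'))$ of the weak order on $S_n$: it again transposes the adjacent values $a,b$, keeps the values of $\left]a,b\right[$ on the same sides, and places $n\in[n]\setminus[a,b]$ last, so it is a cover edge of the $S_n$-fence $f(a,b,L)$. By the definition of the restriction, $\pi'\equiv^*\rho'$ holds if and only if $c_n(\pi')\equiv c_n(\rho')$. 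Applying the reformulation of Theorem~\ref{thm:reading} to each side, the left-hand condition is equivalent to $f^*(a,b,L)\in F_{\equiv^*}$ and the right-hand one to $f(a,b,L)\in F_\equiv$, which is the displayed equivalence.

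The only point requiring genuine care — and the step I would write out in full — is the bookkeeping in the first two paragraphs: that $f^*(a,b,L)$ really is the $S_{n-1}$-fence $f(a,b,L)$, and that adjoining or deleting the value~$n$ at the last position carries cover edges of one fence bijectively onto cover edges of the other without altering $(a,b)$ or $L$. Everything past that is a direct combination of the definition of $\equiv^*$ with Theorem~\ref{thm:reading}. The argument can equivalently be routed through Lemma~\ref{lem:restriction}: if $(c_n(\pi'),c_n(\rho'))$ is a bar, then $c_n(\pi')$ and $c_n(\rho')$ share a class $X$ of $\equiv$, hence $\pi'=p(c_n(\pi'))$ and $\rho'=p(c_n(\rho'))$ share the class $p(X)$ of $\equiv^*$; this reproves ``$c_n(\pi')\equiv c_n(\rho')\Rightarrow\pi'\equiv^*\rho'$'', although that implication is already immediate from the definition of the restriction.
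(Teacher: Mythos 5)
Your proof is correct. The paper itself gives no explicit argument for this lemma, merely stating that it is ``an immediate consequence of Lemma~\ref{lem:restriction}''. Your route is subtly different and arguably cleaner: rather than projecting a general bar $\sigma\lessdot\tau$ of $\equiv$ down to $S_{n-1}$ and invoking Lemma~\ref{lem:restriction} to see that $p(\sigma)\equiv^* p(\tau)$, you exploit Theorem~\ref{thm:reading}'s ``one cover edge of a fence is a bar iff all are'' characterization (together with the fact that $\equiv^*$ is a lattice congruence, Lemma~\ref{lem:restrict}) to reduce the whole question to a single canonically chosen cover edge, namely the one obtained by appending $n$ at the last position. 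For that particular edge, the equivalence $\pi'\equiv^*\rho' \Leftrightarrow c_n(\pi')\equiv c_n(\rho')$ is literally the definition of the restriction, so both inclusions of the asserted set identity fall out at once. The bookkeeping you flag as the main point of care — that $f^*(a,b,L)$ really is the $S_{n-1}$-fence $f(a,b,L)$ because $n\notin[a,b]$ and so $n$ is one of the freely placed values, and that appending/deleting $n$ at the end carries cover edges of one fence to cover edges of the other without disturbing $(a,b)$ or $L$ — is exactly right and worth writing out, since it is what makes the canonical-edge trick work. Your closing remark that the $\Rightarrow$ direction could also be re-derived via Lemma~\ref{lem:restriction} is accurate but, as you say, unnecessary; the definition of $\equiv^*$ already gives both directions for this edge.
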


Rephrased in terms of arc diagrams, Lemma~\ref{lem:restrict-fences} asserts that the arc diagram of the restriction~$\equiv^*$ is obtained from the arc diagram of~$\equiv$ simply by removing the highest point labeled~$n$, and by deleting all arcs incident to it.

\begin{lemma}
\label{lem:restrict-minmax}
For every lattice congruence~$\equiv$ of the weak order on~$S_n$ and any equivalence class~$X\in S_n/{\equiv}$, consider its minimum $\pi:=\min(X)$ and maximum $\rho:=\max(X)$.
Then their projections~$p(\pi)$ and~$p(\rho)$ are the minimum and maximum of the equivalence class~$p(X)$ of the restriction~$\equiv^*$.
\end{lemma}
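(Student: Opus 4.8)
The plan is to exploit that the projection map $p\colon S_n\to S_{n-1}$ is order-preserving with respect to the weak orders. First I would check this: for any $\sigma=a_1\cdots a_n\in S_n$, the inversion set of $p(\sigma)$ is obtained from $\inv(\sigma)$ by deleting all pairs that involve the value~$n$, and since every inversion containing~$n$ has~$n$ as its larger entry, this amounts to $\inv(p(\sigma))=\inv(\sigma)\cap\big([n-1]\times[n-1]\big)$. Consequently, if $\sigma\leq\tau$ in the weak order on~$S_n$, i.e.\ $\inv(\sigma)\seq\inv(\tau)$, then also $\inv(p(\sigma))\seq\inv(p(\tau))$, i.e.\ $p(\sigma)\leq p(\tau)$ in the weak order on~$S_{n-1}$. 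This is the only nonroutine ingredient.

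Next I would assemble the structural facts already available. By Lemma~\ref{lem:restrict}, the restriction~$\equiv^*$ is a lattice congruence on~$S_{n-1}$, so by Lemma~\ref{lem:interval} each of its equivalence classes is an interval and in particular has a well-defined minimum and maximum; and by Lemma~\ref{lem:restriction}, $p(X)$ is precisely such an equivalence class. Set $\tau':=\min(p(X))$. Since $\tau'\in p(X)$, there is some $\tau\in X$ with $p(\tau)=\tau'$. Now $X$ is itself an interval $[\pi,\rho]$ by Lemma~\ref{lem:interval} applied to~$\equiv$, hence $\pi\leq\tau$ in the weak order on~$S_n$, and order-preservation of~$p$ gives $p(\pi)\leq p(\tau)=\tau'$. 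On the other hand $\pi\in X$ yields $p(\pi)\in p(X)$, so $\tau'\leq p(\pi)$ by the choice of~$\tau'$. Therefore $p(\pi)=\tau'=\min(p(X))$. The argument for $p(\rho)=\max(p(X))$ is entirely symmetric: choose $\tau\in X$ with $p(\tau)=\max(p(X))$, use $\rho\geq\tau$ and $p(\rho)\geq p(\tau)$ together with $p(\rho)\in p(X)$.

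I do not expect a genuine obstacle here. The closest thing to a subtle point is making sure the minimum and maximum of an equivalence class of~$\equiv^*$ are legitimately defined and behave as least/greatest elements — which is exactly what Lemma~\ref{lem:interval} (via Lemma~\ref{lem:restrict}) supplies — and that the inversion-set description of $p$ really is monotone, which follows from the elementary observation above. Everything else is bookkeeping with the previously established lemmas, so the whole proof should be only a few lines.
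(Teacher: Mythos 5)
Your proof is correct. The key fact in both your argument and the paper's is that the projection $p$ is monotone with respect to the weak orders, which you correctly derive from the identity $\inv(p(\sigma))=\inv(\sigma)\cap\bigl([n-1]\times[n-1]\bigr)$. The paper packages this as a proof by contradiction (supposing $\max(p(X))\neq p(\rho)$, lifting the maximum to some $c_i(\sigma)\in X$, and exhibiting an inversion present in $c_i(\sigma)$ but not in $\rho$), whereas you state monotonicity explicitly and deduce the result directly by sandwiching $p(\pi)$ and $p(\rho)$ between the extremes of $p(X)$. The underlying ingredient is the same; your direct formulation is slightly cleaner and makes the reusable fact (order-preservation of $p$) visible as a standalone observation.
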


\begin{proof}
Suppose for the sake of contradiction that the maximum of~$p(X)$ is not~$p(\rho)$, but another permutation~$\sigma\in S_{n-1}$.
As $\sigma\in p(X)$, we obtain from Lemma~\ref{lem:projection} that $c_i(\sigma) \in X$ for some $1\leq i\leq n$.
As $\sigma$ is the unique maximum of~$p(X)$ (recall Lemma~\ref{lem:interval}), there exist two entries $a,b$ with $a<b$ that are inverted in~$\sigma$, i.e., $b$ appears before $a$ in~$\sigma$, but not in $p(\rho)$.
As inserting~$n$ into a permutation does not change the relative order of~$a$ and~$b$, the entries $a,b$ are also inverted in $c_i(\sigma)$, but not in~$\rho$.
However, by the definition of the weak order on~$S_n$, this means that $c_i(\sigma)\not<\rho$, contradicting the fact that $\rho$ is the maximum of~$X$.
A similar argument shows that $p(\pi)$ is the minimum of~$p(X)$.
\end{proof}

\subsection{Jumping through lattice congruences}

For any lattice congruence~$\equiv$ of the weak order on~$S_n$, a \emph{set of representatives} for the equivalence classes~$S_n/{\equiv}$ is a subset~$R_n\seq S_n$ such that for every equivalence class~$X\in S_n/{\equiv}$, exactly one permutation is contained in~$R_n$, i.e., $|X\cap R_n|=1$.
Recall that $X(\pi)$, $\pi\in S_n$, denotes the equivalence class from~$S_n/{\equiv}$ containing~$\pi$.
A meaningful definition of `generating the lattice congruence' is to generate a set of representatives for its equivalence classes.
We also require that any two successive representatives form a cover relation in the lattice quotient~$S_n/{\equiv}$.
This is what we achieve with the help of Algorithm~J.

We recursively define such a set of representatives~$R_n$ as follows; see Figure~\ref{fig:jump}:
If~$n=0$ then $R_0:=\{\varepsilon\}$, and if~$n\geq 1$ then we first compute the representatives~$R_{n-1}$ for the restriction~$\equiv^*$ to~$S_{n-1}$, and we then distinguish two cases:
If $\ide_n\not\equiv c_{n-1}(\ide_{n-1})$, then we consider every representative~$\pi\in R_{n-1}$, the corresponding rail~$r(\pi)$ in~$S_n$, and from every equivalence class~$X\in S_n/{\equiv}$ with $X\cap r(\pi)\neq\emptyset$ we pick exactly one permutation from~$X\cap r(\pi)$.
In particular, we always pick~$c_1(\pi)$ and~$c_n(\pi)$, which is possible by~Lemma~\ref{lem:rail}, yielding a set~$R_\pi$.
We then take the union of those permutations,
\begin{subequations}
\label{eq:Rn}
\begin{equation}
\label{eq:Rn1}
  R_n:=\bigcup_{\pi\in R_{n-1}} R_\pi.
\end{equation}
On the other hand, if $\ide_n\equiv c_{n-1}(\ide_{n-1})$ we define
\begin{equation}
\label{eq:Rn2}
  R_n:=\{c_n(\pi)\mid \pi\in R_{n-1}\}.
\end{equation}
\end{subequations}

\begin{figure}
\includegraphics{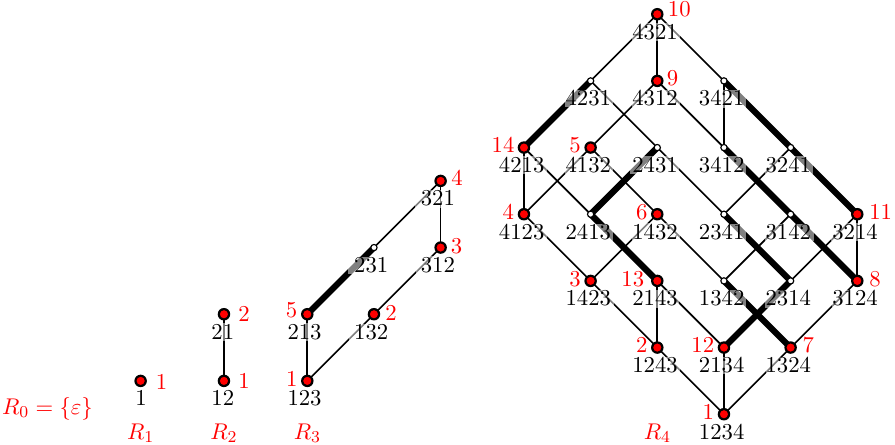}
\caption{Illustration of the representatives and the jumping order for the lattice congruence shown in Figure~\ref{fig:cong}.
The filled dots are the permutations in the sets~$R_n$, and the small numbers next them indicate the ordering in the sequences~$J(R_n)$ defined in~\eqref{eq:JLn12}.}
\label{fig:jump}
\end{figure}

\begin{lemma}
\label{lem:cong}
For every lattice congruence~$\equiv$ of the weak order on~$S_n$, the set~$R_n\seq S_n$ defined in~\eqref{eq:Rn} is indeed a set of representatives for~$S_n/{\equiv}$.
Moreover, $R_n$ is a zigzag language satisfying condition~(z1) if \eqref{eq:Rn1} holds, and condition~(z2) if \eqref{eq:Rn2} holds.
\end{lemma}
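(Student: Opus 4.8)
The plan is to prove both assertions simultaneously by induction on~$n$, following the recursive structure of the definition of~$R_n$. The base case~$n=0$ is immediate: $R_0=\{\varepsilon\}$ is trivially the (unique) set of representatives for~$S_0/{\equiv}$, and~$\{\varepsilon\}$ is a zigzag language by definition. For the inductive step I would first invoke Lemma~\ref{lem:restrict} so that the restriction~$\equiv^*$ is known to be a lattice congruence of the weak order on~$S_{n-1}$, and hence the inductive hypothesis applies to~$R_{n-1}$: it is a set of representatives for~$S_{n-1}/{\equiv^*}$ and a zigzag language. Then I would split into the two cases from the definition of~$R_n$.

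In the case~$\ide_n\not\equiv c_{n-1}(\ide_{n-1})$ (so that~\eqref{eq:Rn1} applies), the heart of the argument will be to relate the equivalence classes of~$\equiv$ to those of~$\equiv^*$ via projection. By Lemma~\ref{lem:restriction}, $X\mapsto p(X)$ maps the classes of~$\equiv$ onto the classes of~$\equiv^*$; and for a fixed~$\pi\in R_{n-1}$ representing a class~$Y$ of~$\equiv^*$, the classes~$X$ of~$\equiv$ with~$p(X)=Y$ are precisely those meeting the rail~$r(\pi)$ (every permutation on~$r(\pi)$ projects to~$\pi$, and conversely~$\pi\in p(X)$ forces some~$c_i(\pi)\in X$). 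Using~$\ide_n\not\equiv c_{n-1}(\ide_{n-1})$, Lemma~\ref{lem:rail} then tells me that the sets~$X\cap r(\pi)$ are nonempty intervals of the rail, which partition~$r(\pi)$, and that~$c_1(\pi)$ and~$c_n(\pi)$ fall into distinct classes; hence the selection~$R_\pi$ is well defined and contains exactly one element of each class~$X$ with~$p(X)=Y$. Summing over~$\pi\in R_{n-1}$ as in~\eqref{eq:Rn1} then gives exactly one representative per class of~$\equiv$. For the zigzag property I would note that~$R_\pi\seq r(\pi)$ gives $p(\rho)=\pi$ for all~$\rho\in R_\pi$, while~$c_1(\pi)\in R_\pi$ gives the converse inclusion, so $\{p(\rho)\mid \rho\in R_n\}=R_{n-1}$ is a zigzag language by induction; and~$c_1(\pi),c_n(\pi)\in R_\pi$ for every~$\pi$ is exactly condition~(z1).

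In the case~$\ide_n\equiv c_{n-1}(\ide_{n-1})$ (so that~\eqref{eq:Rn2} applies), I would use Lemma~\ref{lem:collapse} to conclude its statement~(iii), namely that every rail~$r(\pi)$ consists entirely of bars; hence~$c_1(\pi),\ldots,c_n(\pi)$ all lie in a single class of~$\equiv$. Thus~$\pi\mapsto X(c_n(\pi))$ is onto~$S_n/{\equiv}$, and by the definition of the restriction it identifies~$c_n(\pi)$ with~$c_n(\rho)$ exactly when~$\pi\equiv^*\rho$, so it descends to a bijection~$S_{n-1}/{\equiv^*}\to S_n/{\equiv}$; since~$R_{n-1}$ has one representative per class of~$\equiv^*$, the set $R_n=\{c_n(\pi)\mid\pi\in R_{n-1}\}$ has one per class of~$\equiv$. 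Here the zigzag property is immediate: $p(c_n(\pi))=\pi$ gives $\{p(\rho)\mid\rho\in R_n\}=R_{n-1}$, a zigzag language by induction, and the definition of~$R_n$ is literally condition~(z2).

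I expect the only genuinely delicate step to be the identification, in the case~\eqref{eq:Rn1}, of the classes of~$\equiv$ lying above a fixed class of~$\equiv^*$ with the classes meeting the corresponding rail, and the verification that one representative can be picked consistently from each of them; but this is precisely what the combination of Lemmas~\ref{lem:restriction} and~\ref{lem:rail} delivers. The remaining parts are a careful but routine unwinding of the recursive definitions of~$R_n$ and of zigzag languages.
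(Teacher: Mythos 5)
Your proof is correct and follows essentially the same route as the paper: induction on $n$, with Lemma~\ref{lem:restriction} and Lemma~\ref{lem:rail} handling the case~\eqref{eq:Rn1}, and Lemma~\ref{lem:collapse} handling the case~\eqref{eq:Rn2}. You spell out a few steps the paper leaves implicit (notably that distinct $\pi,\pi'\in R_{n-1}$ yield disjoint rails so each class of~$\equiv$ is hit exactly once, and the explicit invocation of Lemma~\ref{lem:restrict} to license the inductive hypothesis), but the structure and the key lemmas are identical.
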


\begin{proof}
We argue by induction on~$n$.
The statement clearly holds for~$n=0$.
For the induction step, suppose that~$R_{n-1}$ is a set of representatives for the equivalence classes of~$S_{n-1}/{\equiv}^*$, and that $R_{n-1}$ is a zigzag language.
If $\ide_n\not\equiv c_{n-1}(\ide_{n-1})$, we obtain from Lemma~\ref{lem:projection} that for every equivalence class~$X$ of~$S_n/{\equiv}$, the projection~$p(X)$ is an equivalence class of the restriction~$\equiv^*$.
Therefore, we know by induction that~$R_{n-1}$ contains a unique representative~$\pi\in S_{n-1}$ for~$p(X)$, so by our choice of~$R_\pi$ we indeed have~$|X\cap R_\pi|=1$, and moreover $R_n$ as defined in~\eqref{eq:Rn1} satisfies~$|X\cap R_n|=1$.
Furthermore, as we chose~$R_\pi$ to contain~$c_1(\pi)$ and~$c_n(\pi)$ for all~$\pi\in S_{n-1}$, we obtain that~$R_n$ is a zigzag language satisfying condition~(z1) in the definition.
On the other hand, if $\ide_n\equiv c_{n-1}(\ide_{n-1})$, then we obtain from Lemma~\ref{lem:collapse} and Lemma~\ref{lem:projection} that every equivalence class~$X$ of~$S_n/{\equiv}$ satisfies $X=\{c_1(\pi),\ldots,c_n(\pi)\mid \pi\in p(X)\}$, showing that $R_n$ as defined in~\eqref{eq:Rn2} is indeed a set of representatives for~$S_n/{\equiv}$.
Moreover, in this case $R_n$ is a zigzag language satisfying condition~(z2) in the definition.
This completes the proof.
\end{proof}

\begin{figure}
\makebox[0cm]{ 
\includegraphics[width=18cm]{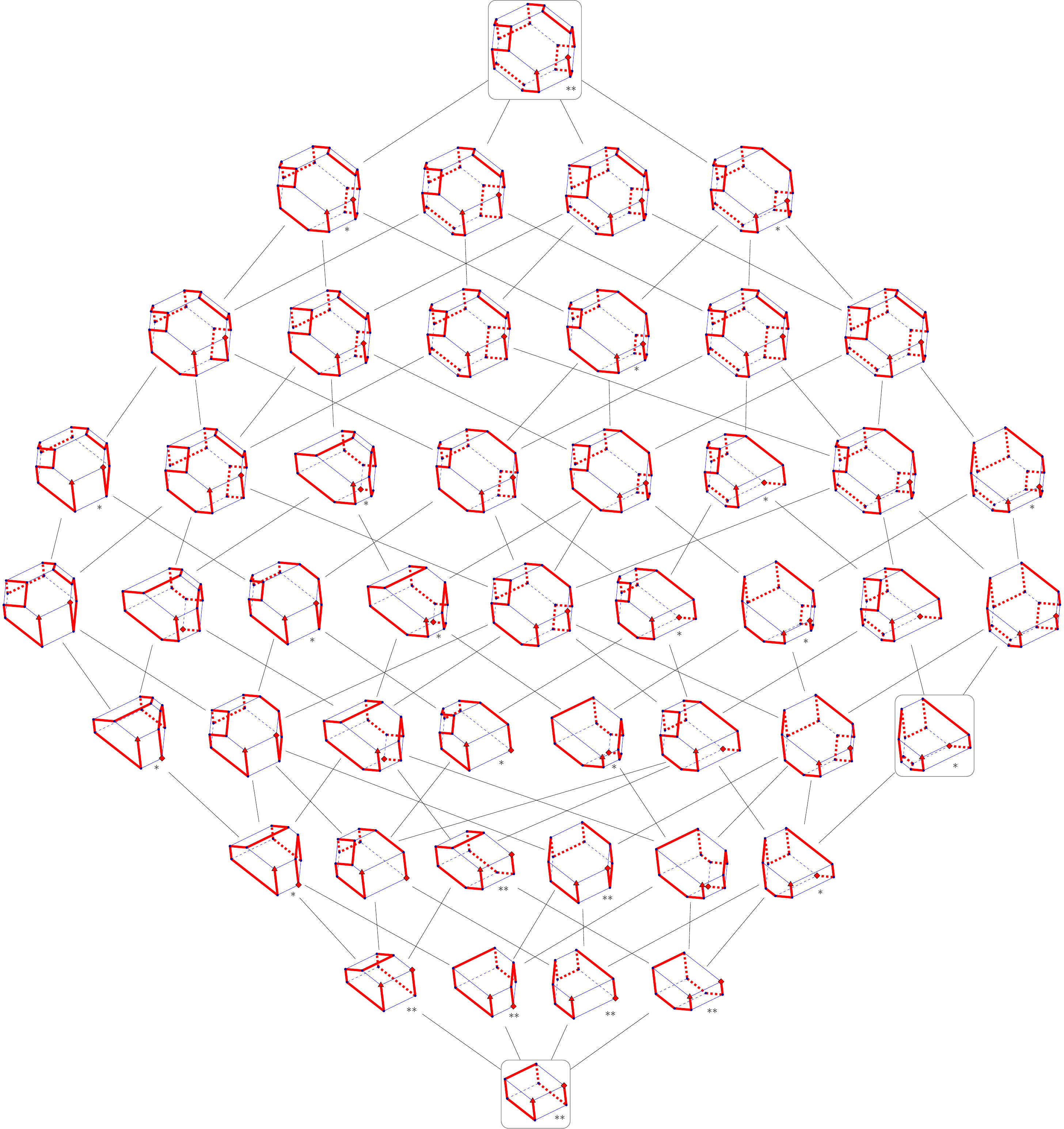}
}
\caption{Lattice congruences of the weak order on~$S_4$, ordered by refinement and realized as polytopes, where only the full-dimensional polytopes are shown.
The polytopes are arranged in the same way as in Figure~\ref{fig:arcs}.
The figure shows the Hamilton path on each quotientope computed by Algorithm~J, with the start and end vertex indicated by a triangle and diamond, respectively.
Permutahedron (top), associahedron (one of four isomorphic variants; middle right) and 3-cube (bottom) are highlighted.
The graphs marked with~* are regular, and those marked with~** are vertex-transitive.
}
\label{fig:quotient}
\end{figure}

\begin{lemma}
\label{lem:jump-order}
Running Algorithm~J with input $L_n:=R_n$, where $R_n$ is the set of representatives of a lattice congruence~$\equiv$ defined in~\eqref{eq:Rn}, then for any two permutations $\pi,\rho\in R_n$ that are visited consecutively, $X(\pi)$ and~$X(\rho)$ form a cover relation in the quotient~$S_n/{\equiv}$.
\end{lemma}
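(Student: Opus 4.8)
The plan is to prove, by induction on~$n$, the following statement, which is a little stronger than the lemma: whenever two permutations $\pi,\rho\in R_n$ are visited consecutively by Algorithm~J, there is a path from~$\pi$ to~$\rho$ in the cover graph of the weak order on~$S_n$ whose edges are all bars of~$\equiv$, except for exactly one edge, which is a non-bar. Granting this, the lemma follows at once: contracting all bars collapses the path to a single edge of the cover graph of~$S_n/{\equiv}$ joining the two distinct classes $X(\pi)$ and~$X(\rho)$, which is precisely the statement that $X(\pi)$ and~$X(\rho)$ form a cover relation in the quotient. The base case $n\leq 1$ is vacuous. For the induction step, Lemma~\ref{lem:cong} tells us that $R_n$ is a zigzag language, so by Theorem~\ref{thm:jump} the permutations of~$R_n$ are visited exactly in the order of the sequence $J(R_n)$ defined in~\eqref{eq:JLnp}, and it suffices to treat each consecutive pair of that sequence, distinguishing the two cases~\eqref{eq:Rn1} and~\eqref{eq:Rn2}.

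In the case~\eqref{eq:Rn2}, i.e.\ when $\ide_n\equiv c_{n-1}(\ide_{n-1})$, the sequence $J(R_n)$ is $c_n(\pi_1),c_n(\pi_2),\ldots$ for $J(R_{n-1})=\pi_1,\pi_2,\ldots$, so a consecutive pair has the form $c_n(\pi_i),c_n(\pi_{i+1})$ with $\pi_i,\pi_{i+1}$ consecutive in $J(R_{n-1})$. Since the restriction~$\equiv^*$ is a lattice congruence of the weak order on~$S_{n-1}$ (Lemma~\ref{lem:restrict}) with representative set~$R_{n-1}$, the induction hypothesis gives a path from~$\pi_i$ to~$\pi_{i+1}$ in the cover graph of~$S_{n-1}$ with exactly one non-bar edge with respect to~$\equiv^*$. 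Appending the value~$n$ to every vertex turns this into a path in the cover graph of~$S_n$ (appending~$n$ preserves every adjacent transposition of values in~$[n-1]$), and since, by definition of the restriction, $\sigma\equiv^*\tau$ holds exactly when $c_n(\sigma)\equiv c_n(\tau)$, each edge retains its bar/non-bar status; hence the lifted path again has exactly one non-bar edge.

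In the case~\eqref{eq:Rn1}, i.e.\ when $\ide_n\not\equiv c_{n-1}(\ide_{n-1})$, we have $J(R_n)=\lvec{c}(\pi_1),\rvec{c}(\pi_2),\lvec{c}(\pi_3),\ldots$, and there are two kinds of consecutive pairs. A pair within a single block is of the form $c_j(\pi),c_{j'}(\pi)$ for a fixed $\pi\in R_{n-1}$; since each of $\lvec{c}(\pi)$ and $\rvec{c}(\pi)$ lists exactly the set~$R_\pi$ in the order along the rail~$r(\pi)$, these two permutations are consecutive elements of~$R_\pi$ along~$r(\pi)$. By Lemma~\ref{lem:rail} the congruence~$\equiv$ cuts~$r(\pi)$ into intervals, and by construction~$R_\pi$ contains exactly one permutation from each of these intervals, so $c_j(\pi)$ and~$c_{j'}(\pi)$ lie in two consecutive intervals; hence the sub-path of~$r(\pi)$ between them consists of bars (the edges inside the two intervals) together with the single edge joining the two intervals, which is a non-bar — exactly one non-bar, as needed. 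A pair straddling two consecutive blocks is of the form $c_1(\pi_i),c_1(\pi_{i+1})$ if the $i$th block is a $\lvec{c}$-block, or $c_n(\pi_i),c_n(\pi_{i+1})$ if it is a $\rvec{c}$-block, with $\pi_i,\pi_{i+1}$ consecutive in $J(R_{n-1})$; here I take the one-non-bar path from~$\pi_i$ to~$\pi_{i+1}$ in the cover graph of~$S_{n-1}$ supplied by the induction hypothesis and lift it by prepending, respectively appending, the value~$n$ to every vertex. Every edge of the $S_{n-1}$-path is an $(a,b)$-edge with $a<b\leq n-1$ lying in a fence~$f(a,b,L)$ with $L\seq\left]a,b\right[$, and inserting the value~$n$ (which lies neither in~$\left]a,b\right[$ nor in~$[a,b]$, as $b<n$) changes neither the pair~$(a,b)$ nor the left-set~$L$ of that fence; so by Lemma~\ref{lem:restrict-fences} the edge is a bar of~$\equiv^*$ if and only if its lift is a bar of~$\equiv$, and the lifted path again carries exactly one non-bar edge.

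The step I expect to be the crux is the ``straddling'' subcase of~\eqref{eq:Rn1}: one has to check carefully that prepending the value~$n$ preserves bar/non-bar status, and the clean way to see this is via the fence labels and Lemma~\ref{lem:restrict-fences}, rather than directly from the definition of~$\equiv^*$ (which only speaks about inserting~$n$ at the last position). The remaining points — that consecutive elements of a block correspond to consecutive intervals of a rail, and that Algorithm~J's output order on~$R_n$ is exactly~$J(R_n)$ — are routine given Lemmas~\ref{lem:cong} and~\ref{lem:rail} and Theorem~\ref{thm:jump}.
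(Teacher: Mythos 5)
Your proof is correct and follows essentially the same approach as the paper's: decompose the consecutive pairs of $J(R_n)$ according to case~\eqref{eq:Rn2}, the same-rail subcase of~\eqref{eq:Rn1}, and the block-straddling subcase of~\eqref{eq:Rn1}, handle the same-rail subcase via Lemma~\ref{lem:rail}, and handle the other two cases by lifting the inductively obtained information about $J(R_{n-1})$. One genuine improvement over the paper's exposition: the paper phrases the inductive hypothesis loosely as ``$\pi_k$ and $\pi_{k+1}$ form a cover relation in the weak order on~$S_{n-1}$,'' which is not literally true (consecutive elements of $J(R_{n-1})$ differ by a jump, not a single adjacent transposition), whereas your strengthened invariant---``there is a path in the cover graph of~$S_{n-1}$ from~$\pi_i$ to~$\pi_{i+1}$ all of whose edges are bars except exactly one''---is precisely what survives lifting. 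Your explicit use of Lemma~\ref{lem:restrict-fences} to justify that bar/non-bar status is preserved under prepending~$n$ (the $c_1$-lift) is also more careful than the paper, which only appeals to the definition of $\equiv^*$ (which is stated in terms of $c_n$, not $c_1$). Minor note: the paper has a typo in the straddling case (it swaps $c_1$ and $c_n$); your identification of the straddling pairs is the correct one.
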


\begin{proof}
Let $R_n$ be a set of representatives of a lattice congruence~$\equiv$ defined in~\eqref{eq:Rn}, and consider the set $L_n:=R_n$, which is a zigzag language by Lemma~\ref{lem:cong}.
If \eqref{eq:Rn1} holds, then by Lemma~\ref{lem:cong} the set~$R_n$ satisfies condition~(z1), so the permutations of~$L_n=R_n$ are generated in the sequence~$J(L_n)$ defined in~\eqref{eq:JLn1}.
Observe that all permutations in~$\lvec{c}(\pi_k)$ or~$\rvec{c}(\pi_k)$, $\pi_k\in R_{n-1}\seq S_{n-1}$, lie on the rail~$r(\pi_k)$.
If $\pi,\rho\in R_n$ are visited consecutively and lie on the same rail, i.e., $\pi=c_i(\pi_k)$ and~$\rho=c_j(\pi_k)$ with $1\leq i<j\leq n$, then there is an integer~$s$ with $i\leq s<j$ such that
\begin{equation*}
  \pi=c_i(\pi_k)\equiv c_{i+1}(\pi_k)\equiv \cdots \equiv c_s(\pi_k)\not\equiv c_{s+1}(\pi_k)\equiv c_{s+2}(\pi_k)\equiv\cdots\equiv c_j(\pi_k)=\rho,
\end{equation*}
so~$X(\pi)$ and~$X(\rho)$ form a cover relation in the quotient~$S_n/{\equiv}$.
Moreover, when transitioning from the last permutation of~$\lvec{c}(\pi_k)$ to the first permutation of~$\rvec{c}(\pi_{k+1})$, or from the last permutation of~$\rvec{c}(\pi_{k+1})$ to the first permutation of~$\lvec{c}(\pi_{k+2})$, then we move from~$c_1(\pi_k)$ to~$c_1(\pi_{k+1})$, or from~$c_n(\pi_{k+1})$ to~$c_n(\pi_{k+2})$, respectively.
Consequently, as $\pi_k$ and~$\pi_{k+1}$, and also $\pi_{k+1}$ and~$\pi_{k+2}$ form a cover relation in the quotient~$S_{n-1}/{\equiv}^*$ by induction, we obtain with the help of Lemma~\ref{lem:projection} that any two consecutive permutations $\pi,\rho$ in~$J(L_n)$ form a cover relation in the quotient~$S_n/{\equiv}$.
On the other hand, if \eqref{eq:Rn2} holds, then by Lemma~\ref{lem:cong} the set~$R_n$ satisfies condition~(z2), so the permutations of~$L_n=R_n$ are generated in the sequence~$J(L_n)$ defined in~\eqref{eq:JLn2}.
In this case, the claim follows immediately by induction.
\end{proof}

\begin{figure}
\includegraphics[width=\textwidth]{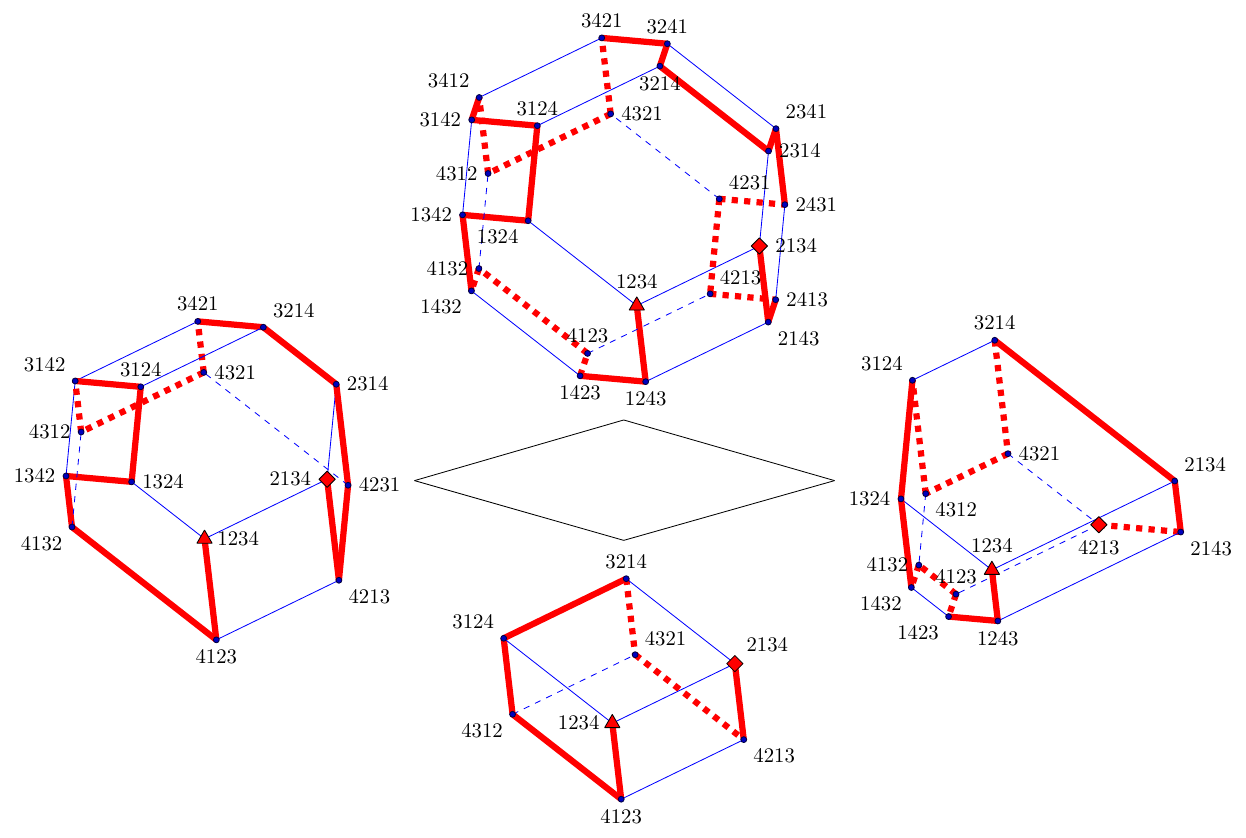}
\caption{Four quotientopes from Figure~\ref{fig:quotient} ordered as a diamond.
These are the permutahedron (top), the associahedron (right), the 3-cube (bottom), and some other polytope (left).
The figure illustrates the consistent choice of representative permutations for the congruence classes, i.e., permutations for lower quotientopes are subsets of permutations for the higher ones.
}
\label{fig:reps}
\end{figure}

Combining Lemmas~\ref{lem:cong} and \ref{lem:jump-order} yields the following theorem.

\begin{theorem}
\label{thm:lattice}
For every lattice congruence~$\equiv$ of the weak order on~$S_n$, let $R_n\seq S_n$ be the set of representatives defined in~\eqref{eq:Rn}.
Then Algorithm~J generates a sequence $J(R_n)=\pi_1,\pi_2,\ldots$ of all permutations from~$R_n$ such that $X(\pi_1),X(\pi_2),\ldots$ is a Hamilton path in the cover graph of the lattice quotient~$S_n/{\equiv}$.
\end{theorem}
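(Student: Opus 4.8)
The plan is to combine the three ingredients already in place, namely Lemma~\ref{lem:cong}, Theorem~\ref{thm:jump}, and Lemma~\ref{lem:jump-order}, with only a line of bookkeeping in between. First I would check that Algorithm~J can legitimately be started from the prescribed initial permutation, i.e., that $\ide_n\in R_n$. This follows by a trivial induction on~$n$: we have $\varepsilon\in R_0$, and in both branches of the construction~\eqref{eq:Rn} the identity $\ide_{n-1}\in R_{n-1}$ gives rise to $c_n(\ide_{n-1})=\ide_n\in R_n$ --- immediately in case~\eqref{eq:Rn2}, and in case~\eqref{eq:Rn1} because each set $R_\pi$ is chosen to contain $c_n(\pi)$.

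Next, by Lemma~\ref{lem:cong} the set $L_n:=R_n$ is a zigzag language (satisfying condition~(z1) or~(z2) according to which branch of~\eqref{eq:Rn} applies), so Theorem~\ref{thm:jump} applies: Algorithm~J started from~$\ide_n$ visits every permutation of~$R_n$ exactly once, in the order of the sequence $J(R_n)=\pi_1,\pi_2,\ldots$ defined by~\eqref{eq:JLnp}. Since, again by Lemma~\ref{lem:cong}, $R_n$ is a set of representatives for~$S_n/{\equiv}$, the assignment $\pi\mapsto X(\pi)$ restricts to a bijection from~$R_n$ onto the vertex set of the cover graph of the quotient; hence $X(\pi_1),X(\pi_2),\ldots$ lists every vertex of that graph exactly once.

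It then remains to see that consecutive terms of this list are adjacent in the cover graph, which is precisely the content of Lemma~\ref{lem:jump-order}: for any $\pi,\rho\in R_n$ visited consecutively by Algorithm~J, the classes $X(\pi)$ and~$X(\rho)$ form a cover relation in~$S_n/{\equiv}$ and therefore span a cover edge of its cover graph. Combining this with the previous paragraph shows that $X(\pi_1),X(\pi_2),\ldots$ is a Hamilton path in the cover graph of~$S_n/{\equiv}$, which is the assertion of the theorem.

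I do not anticipate a genuine obstacle: essentially all the work has already been done in the earlier lemmas (and, behind them, in Lemma~\ref{lem:restriction} and Theorem~\ref{thm:reading}). The only two points that deserve a moment of care are the verification that $\ide_n\in R_n$, so that Theorem~\ref{thm:jump} may be invoked with the start permutation it requires, and the observation that distinctness of the visited vertices~$X(\pi_i)$ comes from $R_n$ being a \emph{set of representatives}, not merely from the fact that Algorithm~J never repeats a permutation.
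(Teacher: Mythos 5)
Your proof is correct and follows exactly the paper's route: the paper states the theorem as an immediate consequence of Lemmas~\ref{lem:cong} and~\ref{lem:jump-order} (together with Theorem~\ref{thm:jump}), and your write-up fills in the same bookkeeping. The small extra check that $\ide_n\in R_n$ is a worthwhile observation that the paper leaves implicit.
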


For every lattice congruence~$\equiv$, Pilaud and Santos~\cite[Corollary~10]{MR3964495} defined a polytope, called the \emph{quotientope} for~$\equiv$, whose graph is exactly the cover graph of the lattice quotient~$S_n/{\equiv}$.
These polytopes generalize many known polytopes, such as hypercubes, associahedra, permutahedra etc.
The following result is an immediate corollary of Theorem~\ref{thm:lattice}, and it is illustrated in Figure~\ref{fig:quotient}.

\begin{corollary}
\label{cor:quotient}
For every lattice congruence~$\equiv$ of the weak order on~$S_n$, Algorithm~J generates a Hamilton path on the graph of the corresponding quotientope.
\end{corollary}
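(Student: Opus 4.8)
The plan is to obtain the statement by simply transporting Theorem~\ref{thm:lattice} through the Pilaud--Santos construction. First I would recall that Pilaud and Santos~\cite[Corollary~10]{pilaud_santos_2018} associate to each lattice congruence~$\equiv$ of the weak order on~$S_n$ a polytope --- the quotientope --- whose graph (i.e., its $1$-skeleton) is \emph{exactly} the cover graph of the lattice quotient~$S_n/{\equiv}$. Hence producing a Hamilton path on the quotientope graph, together with an algorithm computing it, is the same as producing a Hamilton path on the cover graph of~$S_n/{\equiv}$ via such an algorithm.

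Next I would invoke Theorem~\ref{thm:lattice} verbatim: running Algorithm~J on the input $L_n:=R_n$, where $R_n\seq S_n$ is the set of representatives defined in~\eqref{eq:Rn}, yields a sequence $J(R_n)=\pi_1,\pi_2,\ldots$ of all permutations from~$R_n$ such that $X(\pi_1),X(\pi_2),\ldots$ is a Hamilton path in the cover graph of~$S_n/{\equiv}$. Here one uses that $R_n$ is a genuine set of representatives (Lemma~\ref{lem:cong}), so the classes $X(\pi_1),X(\pi_2),\ldots$ enumerate the vertex set~$S_n/{\equiv}$ without repetition, and that consecutive classes form cover relations (Lemma~\ref{lem:jump-order}); together these give a Hamilton path, and it is the output of the concrete greedy procedure of Algorithm~J. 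Combining this with the identification of the cover graph of~$S_n/{\equiv}$ with the quotientope graph from the previous paragraph completes the argument.

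I do not expect any real obstacle: the entire mathematical content sits in Theorem~\ref{thm:lattice} and the chain of lemmas (Lemmas~\ref{lem:restrict}--\ref{lem:jump-order}) feeding into it, while the corollary is a pure translation into polytope language. The only point worth double-checking is the exact statement of~\cite[Corollary~10]{pilaud_santos_2018}, namely the bijective correspondence between the equivalence classes of~$\equiv$ and the vertices of the quotientope (and between contracted/non-contracted cover edges and quotientope edges); once that is cited, the proof is a one-line deduction.
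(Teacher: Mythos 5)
Your proposal is correct and matches the paper exactly: the paper also notes that the quotientope's graph is by definition (Pilaud--Santos, Corollary~10) the cover graph of $S_n/{\equiv}$ and then calls the result an immediate corollary of Theorem~\ref{thm:lattice}, with no additional argument.
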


\begin{remark}
\label{rem:consistent}
Observe that in the definition~\eqref{eq:Rn1}, whenever we encounter an equivalence class~$X\in S_n/{\equiv}$ with $|X\cap r(\pi)|\geq 2$ and $c_1(p(\pi)),c_n(p(\pi))\notin X$, then we have freedom to pick an arbitrary permutation from~$X\cap r(\pi)$ for the set of representatives~$R_\pi$.
By imposing a total order on~$S_n$ (e.g., lexicographic order), we can make these choices unique, and this will make the resulting sets of representatives \emph{consistent} across the entire lattice of congruences ordered by refinement.
Specifically, given two equivalence relations $\equiv$ and~$\equiv'$ where $\equiv$ is a refinement of~$\equiv'$, computing the representatives~$R_n$ and~$R_n'$ according to this rule will result in~$R_n\supseteq R_n'$.
However, the resulting jump ordering~$J(R_n)$ may not be a subsequence of~$J(R_n')$, as argued in~\cite[Remark~3]{MR4391718}.
This consistent choice of representative permutations is illustrated in Figure~\ref{fig:reps}.
\end{remark}

\begin{remark}
\label{rem:cyclic}
In~\cite[Lemma~4]{MR4391718} we showed that if each of the zigzag languages~$R_k$, $2\leq k\leq n-1$, has even cardinality, then the ordering of permutations $J(R_n)$ defined by Algorithm~J is cyclic.
Consequently, if for a given lattice congruence, the number of equivalence classes of each restriction to~$S_k$, $2\leq k\leq n-1$, is even, then Algorithm~J generates a Hamilton \emph{cycle} on the graph of the corresponding quotientope (the converse does not hold in general, but under the additional assumption $|R_2|<|R_3|<\cdots<|R_{n-1}|$).
This happens for instance for the permutahedron and for the hypercube, but not for the associahedron, even though the associahedron is known to admit a Hamilton cycle~\cite{MR920505,MR1723053}.
We are not aware if this condition on the parity of the number of equivalence classes of a lattice congruence can be characterized more easily (e.g., via the arc diagram of the congruence).
We will come back to the question about Hamilton cycles in Section~\ref{sec:open}.
\end{remark}

\section{Regular and vertex-transitive lattice quotients}
\label{sec:reg}

In this section we characterize regular and vertex-transitive quotientopes combinatorially via their arc diagrams, which in particular allows us to count them.
We may either consider these objects in terms of the equivalence classes of the lattice congruence, or in terms of the cover graph of the resulting lattice quotient.
As several congruences may give the same cover graph, the latter distinction is coarser, yielding fewer distinct objects.
Overall, we obtain six different classes of objects, and Table~\ref{tab:trans} summarizes our results for each of them.
The table provides the exact counts for small values of~$n$, various exact and asymptotic counting formulas, as well as references to the theorems where they are established.
In the table, we encounter various familiar counting sequences, namely the squared Catalan numbers, and weighted integer compositions and partitions.
We also establish the precise minimum and maximum degrees for those graph classes, and in the latter result the famous Erd\H{o}s-Szekeres theorem makes its appearance.

\begin{table}
\caption{Number of different classes of quotient graphs that arise from essential lattice congruences and their minimum and maximum degrees.
In this table, $C_n$ denotes the $n$th Catalan number, $c_{n,k}$ denotes the number of integer compositions of~$n$ with exactly $k$ many~2s, and $t_n$ denotes the number of~2s in all integer partitions of~$n$.
The last column contains references to the corresponding sequence numbers in the OEIS~\cite{oeis}.
}
\label{tab:trans}
\setlength{\tabcolsep}{5pt}
\makebox[0cm]{ 
\begin{tabular}{l|l|r@{\hskip 2pt}r@{\hskip 2pt}r@{\hskip 2pt}r@{\hskip 2pt}r@{\hskip 2pt}r|c|c|l}
                     & \multicolumn{2}{r}{$n=2\!$} & 3 & 4 & 5 & 6 & 7 & General formulas/bounds & Ref. & OEIS \\ \hline
quotient graphs      & $|\cQ_n|$  &     1 & 4 & 47 & {\footnotesize 3.322} & {\footnotesize 11.396.000} & ? & $[2^{2^{n-2}}, 2^{2^n-2n}]$ & Thm.~\ref{thm:count-Qn} & A330039 \\
regular              & $|\cR_n|$  & 1 & 4 & 25 & 196 & 1.764 & {\footnotesize 17.424} & $=C^2_{n-1}=16^{n(1+o(1))}$ & Cor.~\ref{cor:count-Rn} & A001246 \\
vertex-transitive    & $|\cV_n|$  & 1 & 4 &  8 & 22 & 52 & 132 & $=\sum\limits_{k\geq 0} 3^k c_{n-1,k}=2.48...^{n(1+o(1))}$ & Cor.~\ref{cor:count-Vn} & A052528 \\
non-iso.\            & $|\cQ_n'|$ & 1 & 3 & 19 & 748 & {\footnotesize 2.027.309} & ? & $\geq 2^n - 2n +1$ & Thm.~\ref{thm:count-Qn'} & A330040 \\
non-iso.\ regular    & $|\cR_n'|$ & 1 & 3 & 10 & 51 & 335 & {\footnotesize 2.909} & ? & & A330042 \\
non-iso.\ vertex-tr. & $|\cV_n'|$ & 1 & 3 &  4 & 8 & 11 & 19 & $=t_{n+1}=e^{\pi\sqrt{2n/3}(1+o(1))}$ & Cor.~\ref{cor:count-Vn'} & A024786 \\
\hline
minimum degree       &            & 1 & 2 & 3 & 4 & 5 & 6 & $=n-1$ & Thm.~\ref{thm:min-deg} & \\
maximum degree       &            & 1 & 2 & 4 & 5 & 7 & 8 & $=2n-\lceil 2\sqrt{n}\rceil$ & Thm.~\ref{thm:max-deg} & A123663 \\
\end{tabular}
}
\end{table}

\subsection{Preliminaries}

We let $\cC_n$ denote the set of all lattice congruences of the weak order on~$S_n$.
Throughout this section, we will denote lattice congruences by capital Latin letters such as~$R\in \cS_n$, and whenever we consider two permutations~$\pi,\rho$ in the same equivalence class of $S_n/R$, we write $\pi\equiv_R \rho$ or simply $\pi\equiv \rho$, if $R$ is clear from the context.
Recall from Theorem~\ref{thm:reading} that every lattice congruence~$R\in \cC_n$ corresponds to a downset $F_R\seq F_n$ of fences in the forcing order, and that such a downset can be represented by its arc diagram, which contains exactly one arc for each fence from~$F_R$.
The \emph{reduced arc diagram} contains only the arcs that correspond to maximal elements in the downset~$F_R$, i.e., to fences that are pairwise incomparable in the forcing order.
Every fence not of the form~$f(a,a+1,\emptyset)$, $a\in[n-1]$, is referred to as \emph{essential}, and we let $F_n^*\seq F_n$ denote the set of all essential fences.
We refer to any lattice congruence~$R$ with $F_R\seq F_n^*$ as \emph{essential}, and we let $\cC_n^*\seq \cC_n$ denote the set of all essential lattice congruences.
Note that by this definition, the arc diagrams of essential lattice congruences do not contain any arcs that connect consecutive points~$a$ and~$a+1$, $a\in[n-1]$.

We refer to the underlying undirected graph of the cover graph of any lattice quotient $S_n/R$, $R\in \cC_n$, as a \emph{quotient graph}~$Q_R$, and we define $\cQ_n:=\{Q_R\mid R\in\cC_n^*\}$.

All 47 essential lattice congruences~$\cC_n^*$ for $n=4$ are shown in Figure~\ref{fig:arcs}, ordered by refinement of the congruences and represented by their arc diagrams, where the arcs of the reduced diagrams are highlighted.
Recall from the previous section that for every essential lattice congruence~$R\in \cC_n^*$, Pilaud and Santos~\cite[Corollary~10]{MR3964495} defined an $(n-1)$-dimensional polytope, called the \emph{quotientope} of~$R$, whose graph is exactly the quotient graph~$Q_R$.
These polytopes are shown in Figure~\ref{fig:quotient}, where the regular and vertex-transitive graphs are marked with~* and~**, respectively.

The following lemma justifies that in our definition of~$\cC_n^*$, we exclude fences that are not essential.
The reason is that including them results in a dimension collapse, i.e., the resulting lattice quotient is isomorphic to some quotient of smaller dimension; see Figure~\ref{fig:dim}.

Given two posets~$(P,<_P)$ and $(Q,<_Q)$, the \emph{Cartesian product} is the poset~$(P\times Q,<)$ with $(p,q)<(p',q')$ if and only if $p<_P p'$ and $q<_Q q'$.
For any set of fences~$F\seq F_n$ and any interval~$[s,t]$, $1\leq s\leq t\leq n$, we define $F|_{[s,t]}:=\{f(a,b,L)\in F\mid s\leq a<b\leq t\}$, i.e., we select all fences from $F$ that lie entirely in this interval.
Moreover, for any integer~$s$ we define $F+s:=\{f(a+s,b+s,L+s)\mid f(a,b,L)\in F\}$ with $L+s:=\{x+s\mid x\in L\}$, i.e., we shift all fences by~$s$.

\begin{lemma}
\label{lem:dim}
Let $R\in \cC_{n+1}$ be a lattice congruence with a non-essential fence~$f(s,s+1,\emptyset)\in F_R$, and define lattice congruences $A\in\cC_s$ and $B\in\cC_{n+1-s}$ by $F_A=F_R|_{[1,s]}$ and $F_B=F_R|_{[s+1,n+1]}-s$.
Moreover, let $R'\in \cC_n$ be the lattice congruence given by
\begin{equation}
\label{eq:FR'}
F_{R'}=F_A\cup \big(F_B+(s-1)\big)\cup D,
\end{equation}
where $D$ is the downset of the fences~$f(s-1,s+1,\emptyset)$ and~$f(s-1,s+1,\{s\})$ in the forcing order for~$S_n$.
Then $S_{n+1}/R$ and~$S_n/R'$ are both isomorphic to the Cartesian product of~$S_s/A$ and~$S_{n+1-s}/B$.
In particular, the lattice quotients~$S_{n+1}/R$ and~$S_n/R'$ are isomorphic.
\end{lemma}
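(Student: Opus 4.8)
The plan is to establish a direct poset isomorphism $S_{n+1}/R \cong (S_s/A) \times (S_{n+1-s}/B)$, and then the same for $S_n/R'$, after which the final statement follows by transitivity of $\cong$. The two arguments are structurally identical, so I would prove a single lemma: if a lattice congruence $C$ of the weak order on $S_m$ contains the non-essential fence $f(t,t+1,\emptyset)$, together with all fences forced to sit inside the two ``blocks'' $[1,t]$ and $[t+1,m]$ but nothing crossing the gap between $t$ and $t+1$ in an essential way, then $S_m/C$ decomposes as a Cartesian product. The key combinatorial observation is that the fence $f(t,t+1,\emptyset)\in F_C$, being a \emph{minimal} element at the bottom of the forcing order that does not force anything below it, and its presence as a bar means every adjacent transposition swapping the values $t$ and $t+1$ (with arbitrary arrangement of the other values) is contracted. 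More importantly, for the congruence $R'$ defined via \eqref{eq:FR'}, the downset $D$ is chosen precisely so that \emph{all} $(t',t'')$-edges with $t'\le s-1 < s+1 \le t''$ — i.e. every cover edge whose transposition ``straddles'' the merged positions $s-1,s,s+1$ — becomes a bar. So first I would unwind the definition of $D$ as the downset of $f(s-1,s+1,\emptyset)$ and $f(s-1,s+1,\{s\})$ in the forcing order and verify that this downset is exactly $\{f(a,b,L): a \le s-1, b\ge s+1\}$, using the description of the covering relations of $\prec$ given right after Theorem~\ref{thm:reading}.

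Next I would identify the vertex sets. By Lemma~\ref{lem:subposet}, each lattice quotient is isomorphic to the induced subposet of minima of equivalence classes. So I would describe the minimum of each class of $S_{m}/C$ explicitly. When $f(t,t+1,\emptyset)$ is contracted and the only ``crossing'' fences present are those forced by it (for $R$, none cross; for $R'$, exactly the straddling ones), the minimum-of-class representatives are exactly the permutations of the form: pick a minimal representative $\pi_A$ of a class of $S_t/A$ on the value set $\{1,\dots,t\}$, pick a minimal representative $\pi_B$ of a class of $S_{m-t}/B$ on the value set $\{t+1,\dots,m\}$ (shifted appropriately), and \emph{concatenate} them — all small values before all large values. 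Here the point is that the bar structure forces every such ``separated'' permutation to be the minimum of its class, and forces every permutation to be equivalent to a separated one: any inversion between a value $\le t$ and a value $\ge t+1$ can be undone by a sequence of bars (this is where the crossing fences are needed for $R'$; for $R$ one argues that no value $\le t$ and $\ge t+1$ can both be ``on the wrong side'', because otherwise the corresponding adjacent transposition would lie in a fence $f(a,b,L)$ with $a\le t<t+1\le b$ which is forced by $f(t,t+1,\emptyset)\in F_R$ and hence is a bar). Then I would check the order relation: two separated permutations $\pi_A\pi_B$ and $\pi_A'\pi_B'$ satisfy $\pi_A\pi_B < \pi_A'\pi_B'$ in the weak order iff $\inv(\pi_A\pi_B)\subseteq \inv(\pi_A'\pi_B')$; since both are separated, there are no cross-block inversions, so the inversion set splits as a disjoint union of the inversions within the first block and within the second, giving $\pi_A<\pi_A'$ in $S_t$ and $\pi_B<\pi_B'$ in $S_{m-t}$. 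Passing to the quotients (using that minima-of-classes capture the quotient order, Lemma~\ref{lem:subposet}) yields $X<Y$ in $S_m/C$ iff the projections are $\le$ in $S_t/A$ and $S_{m-t}/B$ respectively and not both equal, which is exactly the Cartesian product order.

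The main obstacle I anticipate is the bookkeeping in the ``every permutation is equivalent to a separated one'' step, and in particular making precise — via the \emph{fences} captured by Theorem~\ref{thm:reading} rather than via the raw forcing constraints of Figure~\ref{fig:forcing} — that the relevant contractions really happen. For $R$ this requires the argument that any fence $f(a,b,L)$ with $a\le t < t+1\le b$ lies above $f(t,t+1,\emptyset)$ in $\prec$ (directly from the definition $f(a,b,L)\prec f(c,d,M)$ iff $a\le c<d\le b$, $(a,b)\neq(c,d)$, $M=L\cap\,]c,d[$, taking $(c,d)=(t,t+1)$, $M=\emptyset$), hence since $F_R$ is a downset \emph{and} $f(t,t+1,\emptyset)$ is in it... wait — that is the wrong direction; I instead need that such $f(a,b,L)$ is \emph{below} something in $F_R$, so I would use that $f(t,t+1,\emptyset)$ being present forces nothing, and instead argue directly: the hypothesis is that $F_R$ contains $f(s,s+1,\emptyset)$ and (as $A,B$ are defined by restriction to the blocks and $F_R$ is assumed to contain no other ``essential crossing'' content) every fence of $F_R$ is either inside a block or is $f(s,s+1,\emptyset)$ itself — so actually for $R$ the only contracted crossing edge is the $(s,s+1)$-swap, and one shows combinatorially that separated-ness can still be achieved using within-block bars plus this single crossing bar, by an induction on the number of cross-block inversions sliding the boundary. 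For $R'$ the straddling bars make this immediate. I would also need to double-check, via Lemma~\ref{lem:restrict-fences} applied iteratively, that $F_A$ and $F_B$ really are downsets of the respective forcing orders, so that $A\in\cC_s$ and $B\in\cC_{n+1-s}$ are genuine lattice congruences — but this is routine since restriction of a downset under deleting a top (or bottom) point is again a downset. The statement ``$S_{n+1}/R$ and $S_n/R'$ are isomorphic'' is then a one-line consequence.
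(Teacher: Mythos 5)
The high-level plan — describe each quotient class by a separated permutation of the form $\pi_A\,\pi_B$ (all small values before all large ones), then match inversion sets block-by-block to obtain the Cartesian product order, invoking Lemma~\ref{lem:subposet} — is the same as the paper's. But there is a genuine error in the key technical step for $R$, and the workaround you propose would not work.

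You correctly compute, in the parenthetical, that taking $(c,d)=(s,s+1)$ and $M=\emptyset$ in the definition of $\prec$ gives $f(a,b,L)\prec f(s,s+1,\emptyset)$ for \emph{every} fence $f(a,b,L)$ with $a\le s<s+1\le b$, since the condition $M=L\cap\left]s,s+1\right[$ holds vacuously because $\left]s,s+1\right[=\emptyset$. Thus $f(s,s+1,\emptyset)$ sits \emph{above} all crossing fences in $\prec$, and as $F_R$ is a downset containing it, $F_R$ must contain every such $f(a,b,L)$. (This is exactly what the paper states: ``the definition of forcing order implies that $F_R$ also contains all fences $f(c,d,L)$ for all $c\in[1,s]$, $d\in[s+1,n+1]$.'') Every cover edge straddling the boundary is therefore a bar, and the separation step is immediate: within a class, one greedily swaps any out-of-order pair with one value in $[1,s]$ and one in $[s+1,n+1]$.

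After ``wait — that is the wrong direction'' you abandon this correct computation, and assert that ``$f(t,t+1,\emptyset)$ being present forces nothing'' and that ``the only contracted crossing edge is the $(s,s+1)$-swap.'' Both claims are false. (You also smuggle in a hypothesis — that $F_R$ contains no other crossing content — which is not part of the lemma; $R$ is an arbitrary congruence with $f(s,s+1,\emptyset)\in F_R$, and as shown above, $F_R$ then automatically contains \emph{all} crossing fences.) The fallback you propose — reaching a separated representative ``by an induction on the number of cross-block inversions sliding the boundary,'' using only in-block bars and the single $(s,s+1)$-bar — fails concretely: in-block bars never transpose a value from $[1,s]$ with one from $[s+1,n+1]$, and the $(s,s+1)$-bar only transposes the specific pair $s,s+1$, so e.g.\ the permutation $(n+1)\,1\,2\cdots n$ (with $1<s<n$) could never be moved to a separated one. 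The mechanism is precisely the one you computed and then discarded: $f(s,s+1,\emptyset)$ is maximal among crossing fences in $\prec$, so its presence in the downset $F_R$ forces all of them in, in exactly the same way that the downset $D$ forces all straddling fences for $R'$.
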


\begin{figure}
\includegraphics{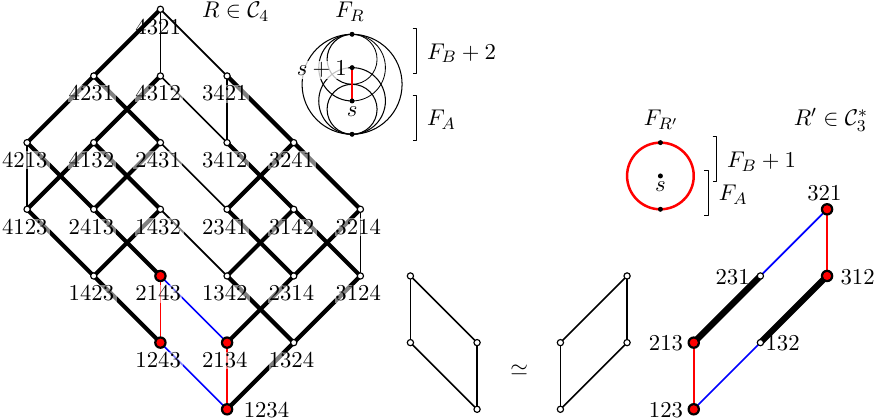}
\caption{Illustration of Lemma~\ref{lem:dim}.
The left hand side shows the lattice congruence from~$\cC_4$ given by the downset of the non-essential fence~$f(2,3,\emptyset)$.
The right hand side shows the lattice congruence from~$\cC_3^*$ given by the downset of the essential fences~$\{f(1,3,\emptyset),f(1,3,\{2\})\}$.
Both lattice quotients are isomorphic to the Cartesian product of~$S_2$ and~$S_2$, whose cover graph is a 4-cycle.
}
\label{fig:dim}
\end{figure}

\begin{proof}
Consider two equivalence classes~$X$ and~$Y$ of~$R$, and two permutations~$\pi\in X$ and~$\rho\in Y$ that differ in an adjacent transposition of two entries~$a$ and~$b$.
As $F_R$ contains the fence~$f(s,s+1,\emptyset)$, the definition of forcing order implies that $F_R$ also contains all fences~$f(c,d,L)$ for all $c\in [1,s]$, $d\in [s+1,n+1]$ and~$L\seq \left]c,d\right[$.
This means there are permutations~$\pi_0\in X$ and~$\rho_0\in Y$, such that in~$\pi_0$ and~$\rho_0$ all entries from~$[1,s]$ appear before all entries from~$[s+1,n+1]$, and $\pi_0$ and~$\rho_0$ differ in an adjacent transposition of~$a$ and~$b$, and either $a,b\in [1,s]$ or $a,b\in [s+1,n+1]$.
We can reach $\pi_0$ and~$\rho_0$ from~$\pi$ and~$\rho$, respectively, by moving down within the equivalence classes~$X$ or~$Y$ towards permutations with fewer inversions, repeatedly swapping any entry from~$[1,s]$ that is to the right of any entry from~$[s+1,n+1]$.
It follows that every cover relation of~$S_{n+1}/R$ has a corresponding cover relation in the Cartesian product of~$S_s/A$ and~$S_{n+1-s}/B$.

Consider two equivalence classes~$X$ and~$Y$ of~$R'$, and two permutations~$\pi\in X$ and~$\rho\in Y$ that differ in an adjacent transposition of two entries~$a$ and~$b$.
By the definition~\eqref{eq:FR'}, the set $F_{R'}$ contain the fences $f(s-1,s+1,\emptyset)$, $f(s-1,s+1,\{s\})$, and all fences in their downset of the forcing order for~$S_n$, so the definition of forcing order yields that $F_{R'}$ also contains all fences~$f(c,d,L)$ for all $c\in [1,s-1]$, $d\in[s+1,n]$ and~$L\seq \left]c,d\right[$.
It follows that either $a,b\in [1,s]$ or $a,b\in [s,n]$.
In the first case, there are permutations~$\pi_0\in X$ and~$\rho_0\in Y$, such that in~$\pi_0$ and~$\rho_0$ all entries from~$[1,s]$ appear at consecutive positions, surrounded by all entries from~$[s+1,n]$, and~$\pi_0$ and~$\rho_0$ differ in an adjacent transposition of~$a$ and~$b$.
In the second case, there are permutations~$\pi^0\in X$ and~$\rho^0\in Y$, such that in~$\pi^0$ and~$\rho^0$ all entries from~$[s,n]$ appear at consecutive positions, surrounded by all entries from~$[1,s-1]$, and $\pi^0$ and~$\rho^0$ differ in an adjacent transposition of~$a$ and~$b$.
Morever, as $\pi,\pi_0,\pi^0\in X$ and $\rho,\rho_0,\rho^0\in Y$, we obtain that every cover relation of~$S_n/R'$ has a corresponding cover relation in the Cartesian product of~$S_s/A$ and~$S_{n+1-s}/B$.
\end{proof}

Given any lattice congruence $R\in\cC_n$ for which $F_R$ contains non-essential fences, we may repeatedly apply Lemma~\ref{lem:dim} to eliminate them, until we arrive at a lattice congruence~$R'\in\cC_m^*$, $m<n$, with an isomorphic quotient graph $Q_{R'}\simeq Q_R$.

\subsection{Exact counts for small dimensions}

With computer help, we determined the number of essential lattice congruences, or equivalently, the number of quotient graphs, for $2\leq n\leq 6$.
The results are shown in Table~\ref{tab:trans}.
We also computed the sets $\cR_n\seq \cQ_n$ and $\cV_n\seq \cQ_n$ of all regular and vertex-transitive quotient graphs, respectively, for $2\leq n\leq 7$, with the help of Theorem~\ref{thm:regular}.

Many of the quotient graphs from~$\cQ_n$ are isomorphic; cf.~\cite[Figure~8]{MR3964495}.
This happens for instance if the corresponding arc diagrams differ only by rotation of reflection, but not only in this case; see Figure~\ref{fig:iso}.
To this end, we let $\cQ_n'$ denote all non-isomorphic quotient graphs from~$\cQ_n$, and we let $\cR_n'$ and $\cV_n'$ be the non-isomorphic regular and vertex-transitive ones.
The corresponding counts for small~$n$ are also shown in Table~\ref{tab:trans}.
We clearly have $\cV_n\seq \cR_n\seq \cQ_n$ and $\cV_n'\seq \cR_n'\seq \cQ_n'$.

\subsection{Counting quotient graphs}
\label{sec:count-Qn}

The following theorem shows that there are double-exponentially many quotient graphs.

\begin{theorem}
\label{thm:count-Qn}
For all $n\geq 3$, we have $2^{2^{n-2}}\leq|\cQ_n|\leq 2^{2^n-2n}$.
\end{theorem}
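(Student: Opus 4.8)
The plan is to establish the two bounds separately, using Theorem~\ref{thm:reading}, which identifies $\cC_n^*$ with the downsets of the forcing order on essential fences, together with the observation that $|\cQ_n| = |\cC_n^*|$ (no two essential congruences are being merged at the level of \emph{congruences} — we count congruences, not graphs, so $\cQ_n$ here really stands in for the count of essential congruences). For the upper bound, the crude estimate is that a congruence is determined by which of the $|F_n^*|$ essential fences it uses, so $|\cC_n^*| \le 2^{|F_n^*|}$. Counting essential fences: a fence $f(a,b,L)$ with $1\le a<b\le n$ and $L\seq \left]a,b\right[$ is non-essential precisely when $b=a+1$ (forcing $L=\emptyset$), so $|F_n^*| = \sum_{1\le a<b\le n} 2^{b-a-1} - (n-1) = \sum_{d=1}^{n-1}(n-d)2^{d-1} - (n-1)$. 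This geometric-type sum evaluates to $2^n - n - 1 - (n-1) = 2^n - 2n$, giving $|\cC_n^*| \le 2^{2^n - 2n}$.

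For the lower bound, I would exhibit an antichain of essential fences of size $2^{n-2}$: every antichain $A$ in the forcing order gives $2^{|A|}$ distinct downsets (take any subset of $A$ and close downward; these are pairwise distinct because distinct subsets of an antichain have distinct down-closures restricted to $A$). A natural candidate is the set of all \emph{maximal} fences $f(1,n,L)$ with $L\seq \left]1,n\right[$; since $\left]1,n\right[$ has $n-2$ elements, there are $2^{n-2}$ such fences, they are all essential (as $n\ge 3$ means $n>2$, so $(1,n)\ne(a,a+1)$ except when $n=2$), and they are pairwise incomparable in $\prec$ because the forcing relation $f(a,b,L)\prec f(c,d,M)$ requires $(a,b)\ne(c,d)$ with $a\le c<d\le b$, which cannot hold between two fences sharing the same pair $(1,n)$. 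Hence $|\cC_n^*| \ge 2^{2^{n-2}}$.

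The main thing to verify carefully is the identification of the count: the theorem is phrased for $|\cQ_n|$, the number of quotient \emph{graphs}, but the argument above bounds the number of essential \emph{congruences} $|\cC_n^*|$. Since distinct congruences can yield isomorphic (indeed identical) cover graphs, we have $|\cQ_n| \le |\cC_n^*|$, so the upper bound $|\cC_n^*|\le 2^{2^n-2n}$ transfers directly to $|\cQ_n|$. For the lower bound on $|\cQ_n|$ one needs the $2^{2^{n-2}}$ congruences constructed from the antichain of maximal fences to produce \emph{distinct graphs}; I would argue this by recovering the congruence from the graph, e.g.\ observing that two such congruences $R,R'$ with $F_R\cap\{\text{maximal fences}\} \ne F_{R'}\cap\{\cdots\}$ have different numbers of equivalence classes (contracting a maximal fence $f(1,n,L)$ merges a specific matching of cover edges not merged by any other fence in the down-closure), so the graphs have different vertex counts and cannot be isomorphic. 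I expect this distinctness check to be the only non-routine point; the two numerical bounds themselves are a short antichain argument and a geometric-sum evaluation.
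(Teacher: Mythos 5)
Your two numerical bounds follow exactly the paper's route: the upper bound counts all essential fences ($|F_n^*| = 2^n-2n$, same computation), and the lower bound uses the antichain of maximal fences $f(1,n,L)$ with $L\seq\left]1,n\right[$, of size $2^{n-2}$ (the paper phrases this as ``all fences with $b-a=n-1$''). So the core argument is the paper's argument.

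One remark on your final paragraph. The worry about distinctness of graphs is unnecessary, and the sketch you give for it would not quite work as stated. The set $\cQ_n=\{Q_R\mid R\in\cC_n^*\}$ is in bijection with $\cC_n^*$ for a simpler reason: the vertices of $Q_R$ are the equivalence classes of $R$, so two distinct congruences produce graphs on literally different vertex sets and hence distinct graphs; the counting of isomorphism classes is deferred to $\cQ_n'$, which the paper treats separately (Theorem~\ref{thm:count-Qn'}). Your proposed fallback argument --- that distinct subsets of the antichain give quotient graphs with different numbers of equivalence classes --- is not sound in general, since two different subsets of the antichain can have the same cardinality and yield downsets of the same size (and indeed isomorphic quotient graphs, cf.\ Figure~\ref{fig:iso}); fortunately this step is not needed.
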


\begin{proof}
The number of fences~$f(a,b,L)\in F_n$ with $b-a=k\in\{1,\ldots,n-1\}$ is exactly~$f_k:=(n-k)2^{k-1}$, as for fixed~$k$, there are $(n-k)$ different choices for~$a$ and~$b$, and for fixed~$a$ and~$b$, there are $2^{k-1}$ many choices for~$L\seq \left]a,b\right[$.
As all fences with $k=n-1$ are essential for $n\geq 3$ and also incomparable in the forcing order, we obtain at least $2^{f_{n-1}}$ distinct downsets.
The total number of essential fences is~$\sum_{k=2}^{n-1}f_k=2^n-2n=:s$, so there are at most $2^s$ distinct downsets.
\end{proof}

To estimate the cardinality of~$\cQ_n'$, we have to factor out symmetries of the arc diagrams, i.e., horizontal and vertical reflections, which account for a factor of at most 4.
However, isomorphic graphs also arise from arc diagrams that do not only differ by those symmetries; see Figure~\ref{fig:iso}.
In particular, we have $|\cQ_n|/|\cQ_n'|>4$ for $n=5$ and $n=6$; see Table~\ref{tab:trans}.

\begin{figure}
\begin{tikzpicture}[scale=.3]
\draw[maxarc] (0,6) arc(90:-90:2.0);
\draw (0,3) arc(90:270:1.5);
\draw (0,6) arc(90:-90:1.5);
\draw (0,6) arc(90:-90:3.0);
\draw[maxarc] (0,4) arc(90:270:2.0);
\draw (0,6) arc(90:270:3.0);
\node[vertex](T0) at (0,0) {};
\node[vertex](T2) at (0,2) {};
\node[vertex](T4) at (0,4) {};
\node[vertex](T6) at (0,6) {};
\end{tikzpicture}
\hspace{2mm}
\begin{tikzpicture}[scale=.3]
\draw[maxarc] (0,4) arc(90:-90:2.0);
\draw (0,3) arc(90:270:1.5);
\draw (0,6) arc(90:-90:1.5);
\draw (0,6) arc(90:-90:3.0);
\draw[maxarc] (0,6) arc(90:-90:2.0);
\draw (0,3) arc(90:-90:1.5);
\draw (0,6) arc(90:270:1.5);
\node[vertex](T0) at (0,0) {};
\node[vertex](T2) at (0,2) {};
\node[vertex](T4) at (0,4) {};
\node[vertex](T6) at (0,6) {};
\draw (6,0) -- (6,8);
\end{tikzpicture}
\hspace{5mm}
\begin{tikzpicture}[scale=.3]
\draw[maxarc] (0,3) arc(90:270:1.5);
\draw[maxarc] (0,3) arc(-90:90:2.5);
\node[vertex](T0) at (0,0) {};
\node[vertex](T2) at (0,2) {};
\node[vertex](T4) at (0,4) {};
\node[vertex](T6) at (0,6) {};
\node[vertex](T8) at (0,8) {};
\end{tikzpicture}
\hspace{2mm}
\begin{tikzpicture}[scale=.3]
\draw[maxarc] (0,3) arc(90:270:1.5);
\draw[maxarc] (0,3) arc(-90:90:1);
\draw[maxarc] (0,5) arc(270:90:1.5);
\node[vertex](T0) at (0,0) {};
\node[vertex](T2) at (0,2) {};
\node[vertex](T4) at (0,4) {};
\node[vertex](T6) at (0,6) {};
\node[vertex](T8) at (0,8) {};
\draw (5,0) -- (5,8);
\end{tikzpicture}
\hspace{5mm}
\begin{tikzpicture}[scale=.3]
\draw[maxarc] (0,2.5) arc(90:270:0.75);
\draw[maxarc] (0,4.5) arc(90:-90:1);
\draw[maxarc] (0,4.5) arc(270:90:1);
\draw[maxarc] (0,8) arc(90:-90:0.75);
\node[vertex](T1) at (0,1) {};
\node[vertex](T2) at (0,2) {};
\node[vertex](T3) at (0,3) {};
\node[vertex](T4) at (0,4) {};
\node[vertex](T5) at (0,5) {};
\node[vertex](T6) at (0,6) {};
\node[vertex](T7) at (0,7) {};
\node[vertex](T8) at (0,8) {};
\end{tikzpicture}
\hspace{2mm}
\begin{tikzpicture}[scale=.3]
\draw[maxarc] (0,2.5) arc(90:270:0.75);
\draw[maxarc] (0,3.5) arc(90:-90:0.5);
\draw[maxarc] (0,3.5) arc(270:90:0.5);
\draw[maxarc] (0,5.5) arc(90:-90:0.5);
\draw[maxarc] (0,5.5) arc(270:90:0.5);
\draw[maxarc] (0,8) arc(90:-90:0.75);
\node[vertex](T1) at (0,1) {};
\node[vertex](T2) at (0,2) {};
\node[vertex](T3) at (0,3) {};
\node[vertex](T4) at (0,4) {};
\node[vertex](T5) at (0,5) {};
\node[vertex](T6) at (0,6) {};
\node[vertex](T7) at (0,7) {};
\node[vertex](T8) at (0,8) {};
\end{tikzpicture}
\caption{Three pairs of lattice congruences from~$\cC_4^*$ (left), $\cC_5^*$ (middle), and~$\cC_8^*$ (right), with distinct arc diagrams but isomorphic quotient graphs.}
\label{fig:iso}
\end{figure}
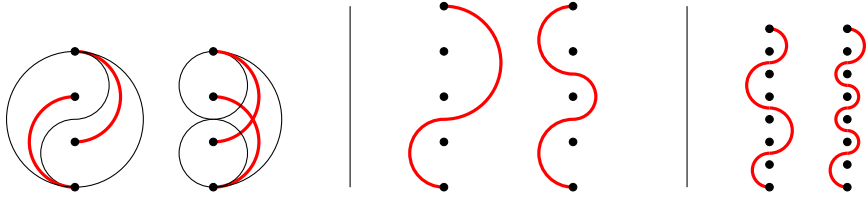

This difference in the growth rates can partially be explained by arc diagrams that induce a graph product structure.
I.e., if we have an arc diagram with two arcs corresponding to the fences~$f(s-1,s+1,\emptyset)$ and~$f(s-1,s+1,\{s\})$, then by Lemma~\ref{lem:dim} the two parts of the arc diagram separated by these two fences can be mirrored independently, or modified as described by Figure~\ref{fig:iso}, yielding the same resulting quotient graph.
Such operations clearly yield many more than 4 symmetries.
We cannot fully explain this, but we provide the following lower bound.

\begin{theorem}
\label{thm:count-Qn'}
For all $n\geq 3$, we have $|\cQ_n'|\geq 2^n-2n+1$.
\end{theorem}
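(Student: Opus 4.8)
The plan is to exhibit a chain of $2^n-2n+1$ essential lattice congruences whose quotient graphs are pairwise non-isomorphic for the simple reason that they have pairwise distinct numbers of vertices. Recall from the proof of Theorem~\ref{thm:count-Qn} that there are exactly $s:=2^n-2n$ essential fences, and from Theorem~\ref{thm:reading} that lattice congruences correspond bijectively to downsets of the forcing order~$\prec$ on~$F_n$. The non-essential fences~$f(a,a+1,\emptyset)$, $a\in[n-1]$, are exactly the maximal elements of~$\prec$, so the set $F_n^*$ of all essential fences, being the set of non-maximal elements, is itself a downset of~$\prec$; hence the essential congruences are precisely the downsets of~$\prec$ contained in~$F_n^*$, i.e., the downsets of the subposet induced on~$F_n^*$.

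First I would fix a linear extension $g_1,g_2,\ldots,g_s$ of~$\prec$ restricted to~$F_n^*$ and set $F_i:=\{g_1,\ldots,g_i\}$ for $0\leq i\leq s$. Each~$F_i$ is a downset of~$F_n^*$, hence of~$F_n$, so by Theorem~\ref{thm:reading} it is the fence set of an essential lattice congruence~$R_i$, and $Q_{R_i}\in\cQ_n$; moreover $F_0\subsetneq F_1\subsetneq\cdots\subsetneq F_s$. The crux is to show that the number of equivalence classes strictly decreases along this chain, i.e., $|S_n/R_i|<|S_n/R_{i-1}|$ for every $i\in[s]$. To this end, observe that the fence~$g_i$ is nonempty (a permutation realizing the ascent~$(a,b)$ with the prescribed values to the left and right of~$a$ and~$b$ exists), so it contains a cover edge~$\pi\lessdot\rho$; since every cover edge of the weak order lies in exactly one fence, namely~$g_i\notin F_{i-1}$, Theorem~\ref{thm:reading} yields $\pi\not\equiv_{R_{i-1}}\rho$, whereas $g_i\in F_i$ yields $\pi\equiv_{R_i}\rho$. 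Because inclusion of fence sets corresponds to refinement of the congruences, the partition of~$R_i$ is a coarsening of that of~$R_{i-1}$ in which at least the two distinct $R_{i-1}$-classes of~$\pi$ and~$\rho$ get merged, so indeed $|S_n/R_i|<|S_n/R_{i-1}|$. Therefore the graphs $Q_{R_0},\ldots,Q_{R_s}$ have pairwise distinct vertex counts and hence are pairwise non-isomorphic, giving $|\cQ_n'|\geq s+1=2^n-2n+1$.

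I do not expect a real obstacle here; the only points requiring a little care are the observation that $F_n^*$ is a downset of~$\prec$ (so that the $F_i$ genuinely are fence sets of essential congruences) and the strict monotonicity of the class count, which rests on the fact that each cover edge of the weak order belongs to a unique fence together with Theorem~\ref{thm:reading}. If one prefers to avoid linear extensions, one may instead take any maximal chain of downsets of~$F_n^*$ from~$\emptyset$ to~$F_n^*$; such a chain has $s+1$ members, each step adding exactly one fence, and the argument above applies verbatim.
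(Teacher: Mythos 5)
Your proposal is correct and follows essentially the same approach as the paper: build a maximal chain of $s+1$ essential lattice congruences (with $s=2^n-2n$) by adding one fence at a time, and observe that the vertex count of the quotient graph strictly decreases along the chain, so the graphs are pairwise non-isomorphic. You spell out two small points the paper states tersely---that $F_n^*$ is itself a downset of the forcing order (since the non-essential fences $f(a,a+1,\emptyset)$ are maximal), and that each added fence strictly decreases the class count because every fence is nonempty and every cover edge lies in exactly one fence, so adding $g_i$ merges at least two distinct $R_{i-1}$-classes---but the underlying argument is the same.
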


\begin{proof}
We argued before that the total number of essential fences in the forcing order is~$2^n-2n=:s$.
This implies that the lattice of congruences ordered by refinement (see Figure~\ref{fig:arcs}) contains a chain $R_0,\ldots,R_s\in\cC_n^*$ of size~$s+1$, where $R_0$ is the maximal element and $R_s$ is the minimal element, and along this chain we have $|F_{R_i}|=i$ for $i=0,\ldots,s$.
Consequently, the number of vertices of the quotient graphs $Q_{R_i}$, $i=0,\ldots,s$, forms a strictly decreasing sequence, starting with $n!$ and ending with $2^{n-1}$.
This is because whenever an additional fence is added, the equivalence classes grow, and so the quotient graph shrinks.
In particular, all those quotient graphs are non-isomorphic, proving that $|\cQ_n'|\geq s+1$.
\end{proof}

\subsection{Regular quotient graphs}
\label{sec:count-Rn}

It turns out that the regular quotient graphs~$\cR_n$ can be characterized and counted precisely via their arc diagrams.
Specifically, we say that an arc is \emph{simple} if it does not connect two consecutive points and if it does not cross the vertical line.
Also, we say that a reduced arc diagram is \emph{simple} if it contains only simple arcs.
Note that the fence $f(a,b,L)$ corresponding to a simple arc either satisfies $L=\emptyset$ or $L=\left]a,b\right[$.
For example, in Figure~\ref{fig:iso}, the leftmost two reduced arc diagrams are simple, whereas the others are not.
In Theorem~\ref{thm:regular} below, we establish that a quotient graph is regular if and only if the corresponding reduced arc diagram is simple.
This yields a closed counting formula involving the squared Catalan numbers; see Corollary~\ref{cor:count-Rn}.

The first lemma allows us to compute degrees of the quotient graph by considering only the minima and maxima of equivalence classes.

\begin{lemma}
\label{lem:neighbor}
Let~$X$ be an equivalence class of a lattice congruence~$R\in\cC_n$.
Consider all descents in $\pi:=\min(X)$ and all permutations $\pi_1',\ldots,\pi_d'$ obtained from~$\pi$ by transposing one of them.
Also, consider all ascents in $\rho:=\max(X)$ and all permutations $\rho_1',\ldots,\rho_a'$ obtained from~$\rho$ by transposing one of them.
Then the down-neighbors of~$X$ in the quotient graph~$Q_R$ are $X(\pi_1'),\ldots,X(\pi_d')$, and they are all distinct, and the up-neighbors of~$X$ in the quotient graph are $X(\rho_1'),\ldots,X(\rho_a')$, and they are all distinct.
In particular, the degree of~$X$ in the quotient graph is the number of descents of~$\min(X)$ plus the number of ascents of~$\max(X)$.
\end{lemma}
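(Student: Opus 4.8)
The plan is to analyze the edges of the quotient graph incident to $X$ by separately handling down-neighbors and up-neighbors, exploiting the symmetry between the two cases (the up-neighbor case being the down-neighbor case applied to the reversed permutations, which have complementary inversion sets). Recall that the cover graph of $S_n/R$ is obtained from the cover graph of the weak order by contracting all cover edges that are bars, and that each equivalence class $X$ is an interval $[\pi,\rho]$ with $\pi=\min(X)$ and $\rho=\max(X)$ by Lemma~\ref{lem:interval}. The down-neighbors of $X$ in $Q_R$ are exactly the equivalence classes $Y\ne X$ for which some cover edge $y\lessdot x$ of the weak order has $x\in X$ and $y\in Y$; such a non-bar edge, after contraction, becomes the edge $XY$.

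First I would show that every down-neighbor of $X$ can be witnessed by a cover edge leaving $\pi=\min(X)$ downward, i.e.\ by transposing a descent of $\pi$. Suppose $y\lessdot x$ with $x\in X$, $y\notin X$. Since $X$ is an interval with minimum $\pi$ and $x\in X$, we have $\pi\le x$, and I want to push the witnessing edge down to $\pi$. The key is that $\pi$, being the minimum of its class, has the property that every one of its down-cover-edges is a non-bar (if $\pi'\lessdot\pi$ were a bar then $\pi'\in X$, contradicting minimality); this is where Lemma~\ref{lem:rail}-style reasoning or a direct appeal to Lemma~\ref{lem:subposet} helps, since in the induced subposet on minima and maxima the class $X$ is represented by $\pi$ and its down-neighbors in $P/{\equiv}$ correspond to cover relations $\min(Y)\lessdot \min(X)$ by the statement quoted after Lemma~\ref{lem:subposet} ("$\min(X)<\min(Y)$ and $\max(X)<\max(Y)$" for $x\lessdot y$). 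So each down-neighbor $Y$ satisfies $\min(Y)\lessdot\min(X)=\pi$ in the weak order, meaning $\pi$ covers $\min(Y)$, which happens exactly by transposing one of the descents of $\pi$. Thus the down-neighbors are among $X(\pi_1'),\dots,X(\pi_d')$.

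Next I would prove these $d$ classes are pairwise distinct and all genuinely distinct from $X$. Distinctness from $X$ is immediate: $\pi_i'\lessdot\pi$ and $\pi_i'\notin X$ since $\pi=\min(X)$. For pairwise distinctness, suppose $X(\pi_i')=X(\pi_j')=:Y$ with $i\ne j$. Then $\pi_i',\pi_j'\in Y$, so $\pi_i'\vee\pi_j'\in Y$ as well, because an equivalence class is closed under joins (being an interval, or directly: $\pi_i'\equiv\pi_i'$, $\pi_j'\equiv\pi_j'$ gives nothing, but $\pi_i'\vee \pi_j' \equiv \pi_i' \vee \pi_j'$ trivially — rather use that $Y$ is an interval $[\min(Y),\max(Y)]$ so the join of two of its elements lies in $Y$). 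Now $\pi_i'\vee\pi_j'$: the inversion set of $\pi_i'$ is $\inv(\pi)$ minus the pair swapped at the $i$th descent, similarly for $\pi_j'$, so $\inv(\pi_i')\cup\inv(\pi_j')=\inv(\pi)$ and its transitive closure is $\inv(\pi)$ (it is already transitive), giving $\pi_i'\vee\pi_j'=\pi$. Hence $\pi\in Y$, so $Y=X$, contradicting distinctness from $X$. Therefore $X(\pi_1'),\dots,X(\pi_d')$ are distinct, and combined with the previous paragraph they are precisely the down-neighbors of $X$.

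Finally I would apply the order-reversing automorphism of the weak order ($\pi\mapsto\pi^{\mathrm{rev}}$, reversing the word, which is an anti-automorphism of the lattice and sends $R$ to a lattice congruence of the reversed weak order with equivalence classes the reverses of those of $R$) to deduce the up-neighbor statement from the down-neighbor statement: the up-neighbors of $X$ are $X(\rho_1'),\dots,X(\rho_a')$ obtained by transposing the ascents of $\rho=\max(X)$, all distinct, by the dual argument (using meets in place of joins). Since down-neighbors and up-neighbors together exhaust the neighbors of $X$ in the cover graph $Q_R$ and no class is simultaneously an up- and a down-neighbor (that would create a directed cycle in the acyclic cover graph of $S_n/R$), the degree of $X$ equals $d+a=\desc(\min(X))+\asc(\max(X))$, as claimed. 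The main obstacle I anticipate is the bookkeeping in the first step — cleanly justifying that a witnessing cover edge for a down-neighbor can always be taken at $\min(X)$ rather than at an arbitrary element of $X$; invoking Lemma~\ref{lem:subposet} and the explicit cover-relation description stated just below it should make this routine rather than delicate.
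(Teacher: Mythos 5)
Your overall plan matches the paper's intent — use Lemma~\ref{lem:interval} (classes are intervals, hence join-/meet-closed) for distinctness, and Lemma~\ref{lem:subposet} plus the descent/ascent description of weak-order cover edges to locate the witnessing edges at $\min(X)$ and $\max(X)$. The join computation $\pi_i'\vee\pi_j'=\pi$ and its use to rule out $X(\pi_i')=X(\pi_j')$ are correct and exactly what is needed for the "distinct" part.

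However, there is a genuine gap in the step that you yourself flagged as the main obstacle. You assert: "each down-neighbor $Y$ satisfies $\min(Y)\lessdot\min(X)=\pi$ \emph{in the weak order}". This is false. Lemma~\ref{lem:subposet} gives an isomorphism between $S_n/{\equiv}$ and the induced subposet on the minima, so $Y\lessdot X$ in the quotient translates into $\min(Y)$ being covered by $\min(X)$ \emph{in that induced subposet}, not in the weak order on~$S_n$ — a cover in an induced subposet is merely $\min(Y)<\min(X)$ with no minimum of a class strictly between. Concretely, in~$S_3$ with the sylvester congruence (contracting the single $(1,3)$-edge $213\lessdot 231$), take $X=\{321\}$ and $Y=\{213,231\}$: then $Y\lessdot X$ in the quotient, but $\min(Y)=213$ is not covered by $\min(X)=321$ in the weak order (there are two elements in between). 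What \emph{is} true, and suffices, is that some element of $Y$ (here $231$, not $\min(Y)$) is covered by $\pi$. To repair the argument: take a maximal chain $\min(Y)=z_0\lessdot\cdots\lessdot z_m=\pi$ in $S_n$. Then $z_{m-1}$ is one of the $\pi_i'$, and $z_{m-1}\notin X$; setting $Z:=X(\pi_i')$, one has $Z<X$, and $\min(Y)\leq z_{m-1}$ gives $Y\leq Z$ in the quotient. Since $Y\lessdot X$ and $Y\leq Z<X$, we get $Y=Z=X(\pi_i')$. With this patch the rest of your argument (distinctness via joins, the reversal symmetry for up-neighbors, and acyclicity to separate up- and down-neighbors) goes through and matches the paper's approach.
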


\begin{proof}
From Lemma~\ref{lem:interval} it follows that for any lattice congruence, the down-neighbors of the minimum of an equivalence class~$X$ all belong to distinct equivalence classes, and the up-neighbors of the maximum of~$X$ all belong to distinct equivalence classes.
Recall that in the weak order on~$S_n$, the down-neighbors of a vertex are reached by adjacent transpositions of descents, and the up-neighbors are reached by adjacent transpositions of ascents.
From this the statement follows with the help of Lemma~\ref{lem:subposet}.
\end{proof}

The next lemma helps us to compute the maximum of an equivalence class quickly.
It is an immediate consequence of the definition of forcing order.

\begin{lemma}
\label{lem:swap-blocks}
Consider a lattice congruence $R\in\cC_n$ and a permutation $\pi$ with an ascent~$(a,b)$.
Let $A$ be a substring ending with~$a$ of entries of~$\pi$ of size at most~$a$, and $B$ a substring starting with~$b$ of entries that are of size at least~$b$, i.e., we have $\pi=L\,A\,B\,R$ for some substrings $L,R$.
If the permutation~$\rho$ obtained by transposing the pair~$(a,b)$ is in the same equivalence class as~$\pi$, i.e., $\pi\equiv \rho$, then they are also in the same equivalence class as the permutation obtained by swapping the entire substrings~$A$ and~$B$, i.e., $\pi\equiv\rho\equiv L\,B\,A\,R$.
\end{lemma}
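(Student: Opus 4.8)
The plan is to reduce the claimed block swap to a sequence of elementary adjacent transpositions, each of which is forced to be a bar by the forcing order once we know that the single transposition of $(a,b)$ is a bar. Write $\pi=L\,A\,B\,R$ with $A=a_1\cdots a_p$ (so $a_p=a$ and $a_i\le a$ for all $i$) and $B=b_1\cdots b_q$ (so $b_1=b$ and $b_j\ge b$ for all $j$). Since $\pi\equiv\rho$ where $\rho$ is obtained by swapping the adjacent pair $a=a_p$ and $b=b_1$, the fence $f(a,b,L_0)$ containing this $(a,b)$-edge lies in $F_R$, where $L_0\seq\left]a,b\right[$ records the values strictly between $a$ and $b$ that sit to the left of both in $\pi$. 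Note that no entry of $A$ or of $B$ lies in $\left]a,b\right[$: entries of $A$ have size at most $a$ and entries of $B$ have size at least $b$, so they are not \emph{strictly} between $a$ and $b$ (the entries equal to $a$ or $b$ can only be $a_p=a$ and $b_1=b$ themselves). Hence $L_0$ is the same regardless of how we permute the entries of $A$ among themselves or of $B$ among themselves, and regardless of moving entries of $A$ or $B$ past the pair $(a,b)$.

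The main step is then to realize the transformation $L\,A\,B\,R \leadsto L\,B\,A\,R$ as a composition of $(a,b)$-edge moves of the \emph{same} fence $f(a,b,L_0)$, together with type~i reorderings within $A$ and within $B$. Concretely, I would proceed greedily: repeatedly locate, in the current permutation, an occurrence of an adjacent pair where an entry of $B$ sits immediately to the left of an entry of $A$ that is still "in the way", and argue that bringing $b$ and $a$ back into adjacency via type~i constraints (reordering the entries of $A$ to put $a$ last in the remaining $A$-block, and the entries of $B$ to put $b$ first in the remaining $B$-block — all of which are forced since these are reorderings of values on one side of the $(a,b)$-pair) produces again an $(a,b)$-edge of the same fence $f(a,b,L_0)$, hence again a bar. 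Swapping that edge moves one $A$-entry past the whole $B$-block (or symmetrically). Iterating $pq$ such moves migrates the entire block $A$ to the right of $B$; a final type~i cleanup restores the original internal orders of $A$ and of $B$. All permutations produced along the way are equivalent to $\pi$ by transitivity, so in particular $L\,B\,A\,R\equiv\pi\equiv\rho$.

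The step I expect to be the main obstacle is bookkeeping the invariant that at every stage the relevant adjacent transposition really does belong to the fence $f(a,b,L_0)$ — i.e., that the set of $\left]a,b\right[$-values to the left of the pair never changes. This is where the size constraints on $A$ and $B$ are essential: because those blocks contribute no values in the open interval $\left]a,b\right[$, shuffling them and sliding $a,b$ through them leaves $L_0$ invariant, so Theorem~\ref{thm:reading} (together with the definition of a fence) keeps every intermediate edge a bar. An alternative, slicker route — which may be cleaner to write up — is to invoke the forcing constraints directly: an $(a,b)$-bar forces, via type~i, all $(a,b)$-edges with the same left/right pattern of intermediate values, and via type~ii one can push any single larger-or-smaller value across; applying this repeatedly to the entries of $A$ (all $\le a$) and of $B$ (all $\ge b$) moves them one at a time across the pair. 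Either way, the lemma follows once the invariance of $L_0$ under these moves is checked, which is exactly what the hypotheses on $|A|$ and $|B|$ guarantee.
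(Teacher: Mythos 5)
Your key observation --- that neither $A$ nor $B$ contributes any value to the open interval $\left]a,b\right[$, so the ``intermediate pattern'' $L_0$ is preserved under all the moves --- is exactly the crux, and it is correct. However, the mechanism you propose for carrying out the block swap is flawed. A type~i constraint relates \emph{two $(a,b)$-edges}: it says that if $(\pi,\rho)$ is an $(a,b)$-bar and $(\pi',\rho')$ is the $(a,b)$-edge obtained from it by reordering values on one side, then $(\pi',\rho')$ is \emph{also} a bar. It does \emph{not} say that the reordering edges $(\pi,\pi')$ or $(\rho,\rho')$ are themselves bars, so it does not put $\pi$ and $\pi'$ into the same equivalence class. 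Your greedy plan --- ``reorder $A$ to put $a$ last, reorder $B$ to put $b$ first, all of which are forced'' --- assumes precisely this wrong conclusion; the intermediate permutations you pass through during those reorderings are not in general equivalent to $\pi$, so your chain of transitivity breaks. (Your parenthetical alternative invoking type~ii constraints is closer to a correct argument, but as sketched it does not fix the main route and is not worked out.)

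The proof the paper intends (it labels the lemma ``an immediate consequence of the definition of forcing order'') is cleaner and avoids the local constraints entirely. Walk from $LABR$ to $LBAR$ by adjacent transpositions, e.g.\ move $b=b_1$ left past $A$, then $b_2$ left past $A$, and so on. Every transposition along the way swaps some $a_i\in A$ with some $b_j\in B$, where $a_i\le a<b\le b_j$. The corresponding fence is $f(a_i,b_j,L')$ for the current left-pattern~$L'$, and --- this is where your observation enters --- since entries of $A$ and $B$ never lie in $\left]a,b\right[$, we always have $L'\cap\left]a,b\right[=L_0$. Hence $f(a_i,b_j,L')\prec f(a,b,L_0)$ in the forcing order (larger interval, matching restriction). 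Since $f(a,b,L_0)\in F_R$ and $F_R$ is a downset by Theorem~\ref{thm:reading}, each of these fences is in~$F_R$, so every transposition along the path is a bar and every intermediate permutation is equivalent to~$\pi$. This gives $\pi\equiv\rho\equiv LBAR$ directly, without ever needing to reorder within $A$ or $B$.
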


\begin{wrapfigure}{r}{0.45\textwidth}
\flushright
\includegraphics{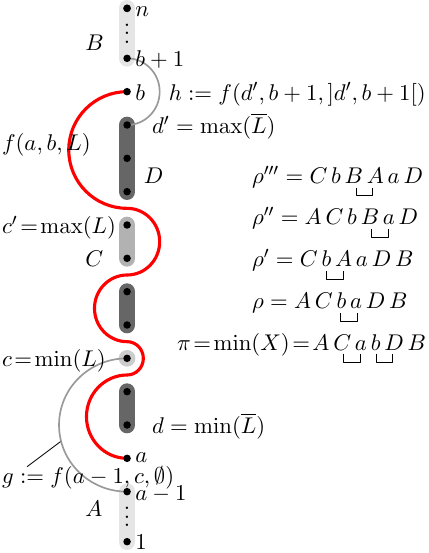}
\captionsetup{width=.85\linewidth}
\caption{Illustration of the proof of Lemma~\ref{lem:non-simple}.
Descents in the permutations are marked by square brackets.
}
\vspace{-8mm}
\label{fig:non-simple}
\end{wrapfigure}

There is a corresponding version of Lemma~\ref{lem:swap-blocks} for swapping substrings around a descent~$(b,a)$, to quickly compute the minimum of an equivalence class, but we omit stating this symmetric variant explicitly here.

We first rule out non-simple arc diagrams as candidates for giving a regular quotient graph.

\begin{lemma}
\label{lem:non-simple}
If an essential lattice congruence~$R\in\cC_n^*$ has a non-simple arc in its reduced arc diagram, then the quotient graph~$Q_R$ is not regular.
\end{lemma}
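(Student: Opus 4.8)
The goal is to show that a non-simple arc in the reduced diagram forces two vertices of the quotient graph to have different degrees. Recall that a simple arc is one whose fence $f(a,b,L)$ has $L=\emptyset$ or $L=\left]a,b\right[$ and moreover $b>a+1$; a non-simple arc is either one that crosses the vertical line (i.e.\ $L\notin\{\emptyset,\left]a,b\right[\}$), or one connecting consecutive points, but the latter is excluded since $R$ is essential. So I may assume there is a maximal fence $f(a,b,L)\in F_R$ with $\emptyset\subsetneq L\subsetneq \left]a,b\right[$. Using Lemma~\ref{lem:neighbor}, the degree of an equivalence class is the number of descents of its minimum plus the number of ascents of its maximum, so it suffices to exhibit two equivalence classes whose degrees differ. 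The natural strategy is to build a specific permutation $\pi$ realizing the $(a,b)$-bar of this fence and compute the minimum and maximum of $X(\pi)$ explicitly, exploiting the block-swap Lemma~\ref{lem:swap-blocks} and its descent-analogue to identify these extrema.

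**Key steps.** First I would choose the combinatorially simplest witness: pick one value $x\in L$ and one value $y\in\left]a,b\right[\setminus L$, and construct a permutation $\pi$ in which the entries are arranged so that $x$ sits just left of the block $\{a,b\}$, $y$ sits just right of it, the remaining values of $[n]$ are placed to make as many positions as possible forced, and $(a,b)$ is an ascent. Since $f(a,b,L)\in F_R$, the corresponding edge is a bar, so $\pi\equiv\rho$ where $\rho$ transposes $a,b$. Second, I would use Lemma~\ref{lem:swap-blocks} (and its symmetric descent version) to push toward $\max(X(\pi))$ and $\min(X(\pi))$: the presence of the split $L\neq\emptyset,\left]a,b\right[$ means that when we move to the maximum, the value $x\in L$ stays left of $a$ and $b$ while $y\in\left]a,b\right[\setminus L$ stays right, creating an asymmetry in the ascent/descent structure at the extrema that a simple arc would not create. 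Third, I would count: compute $\asc(\max(X(\pi)))+\desc(\min(X(\pi)))$ for this class and compare it against the degree of a ``generic'' class elsewhere in the quotient graph (for instance a singleton class, or the class obtained by a controlled local modification), showing the two numbers are unequal. Figure~\ref{fig:non-simple} is cited for exactly this kind of explicit descent bookkeeping, so the computation is meant to be short once the right $\pi$ is fixed.

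**Main obstacle.** The delicate point is not the block-swap machinery but the bookkeeping: I must be sure that the fence $f(a,b,L)$ is genuinely a \emph{maximal} element of the downset $F_R$ (this is what ``reduced arc diagram'' means), so that no larger fence further collapses the classes and erases the asymmetry, and I must verify that the extra values placed in $\pi$ to ``lock'' positions do not accidentally introduce forcing (via Theorem~\ref{thm:reading} / type~ii constraints) that makes the two classes I compare isomorphic after all. Concretely, the hard part is choosing $\pi$ so that $\min(X(\pi))$ and $\max(X(\pi))$ are computed \emph{exactly}, not just bounded; the crossing of the arc guarantees that in $\max(X(\pi))$ some value smaller than $b$ and bigger than $a$ is pinned on each side of the $\{a,b\}$-block, and this pinning is precisely what changes the ascent count compared with the simple case. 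Once that is pinned down, comparing against any equivalence class whose extrema are unconstrained (e.g.\ one containing the identity neighborhood, which always has degree dictated purely by the global structure) yields the required degree discrepancy, so $Q_R$ is not regular.
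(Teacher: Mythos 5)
Your plan follows the paper's proof essentially step for step: use the singleton class $\{\ide_n\}$ (degree $n{-}1$, forced by essentiality) as the baseline, build an equivalence class $X$ from the crossing fence $f(a,b,L)$, compute $\min(X)$ and $\max(X)$ exactly via Lemma~\ref{lem:neighbor}, Lemma~\ref{lem:swap-blocks}, and the maximality of $f(a,b,L)$ in the reduced diagram, and show $\deg(X)=n$. One small but load-bearing correction: it is not enough to pin a single $x\in L$ left of $\{a,b\}$ and a single $y\in\left]a,b\right[\setminus L$ right of it — to land on the fence $f(a,b,L)$ (rather than some other $(a,b)$-fence that might not lie in $F_R$) you must place \emph{all} of $L$ to the left of $a,b$ and \emph{all} of $\left]a,b\right[\setminus L$ to the right, and sorting each block increasingly is what makes the descent count of $\min(X)$ exactly two; the paper's choice $\pi=A\,C\,a\,b\,D\,B$ does precisely this. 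Also, be prepared for the computation of $\max(X)$ to split into four subcases depending on whether the boundary fences $f(a-1,\min L,\emptyset)$ and $f(\max(\left]a,b\right[\setminus L),\,b+1,\,\left]\cdot,\cdot\right[)$ lie in $F_R$ — in every subcase $\max(X)$ has exactly one descent, but one must check this, it is not automatic from the block swaps.
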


\begin{proof}
As $R$ is essential, $F_R$ does not contain any fence of the form~$f(s,s+1,\emptyset)$, $s\in[n-1]$.
Consequently, the equivalence class containing the identity permutation~$\ide_n$ does not contain any other permutations and so has degree~$n-1$ in~$Q_R$ by Lemma~\ref{lem:neighbor}.
In the following we identify an equivalence class~$X$ whose degree in~$Q_R$ is~$n$, which proves that $Q_R$ is not a regular graph.
This part of the proof is illustrated in Figure~\ref{fig:non-simple}.

Consider a non-simple arc in the reduced arc diagram of~$R$, and consider the corresponding fence~$f(a,b,L)\in F_R$, $a<b$.
We define $\ol{L}:=\left]a,b\right[\setminus L$.
The assumption that the arc is not simple means that $L$ and $\ol{L}$ are both non-empty.
We define $c:=\min(L)$, $c':=\max(L)$, $d:=\min(\ol{L})$, and $d':=\max(\ol{L})$, and we write~$C$ and~$D$ for the increasing sequences of numbers in the sets~$L$ and~$\ol{L}$, respectively.
We also define the sequences $A:=(1,\ldots,a-1)$ and $B:=(b+1,\ldots,n)$.
Now consider the equivalence class~$X\in S_n/R$ which contains the permutations $\pi:=A\,C\,a\,b\,D\,B$ and $\rho:=A\,C\,b\,a\,D\,B$.

Clearly, $\pi$ and~$\rho$ differ in an adjacent transposition of the entries~$a$ and~$b$, and we have $\pi\equiv \rho$ due to the fence~$f(a,b,L)\in F_R$.
We first prove that $\pi=\min(X)$.
By Lemma~\ref{lem:neighbor}, we need to check the descents of~$\pi$, and there are exactly two of them, namely~$(c',a)$ and~$(b,d)$.
None of them can be transposed to reach a permutation in~$X$, as neither the fence $f(a,c',\left]a,c'\right[\cap L)$, nor the fence $f(d,b,\left]d,b\right[\cap L)$ is in~$F_R$, as both are above $f(a,b,L)$ in the forcing order, and if one of them was in~$F_R$, then the arc corresponding to $f(a,b,L)$ would not be in the reduced arc diagram.
This proves that $\pi$ is the minimum of~$X$.

Now consider the permutation~$\rho$.
It has only one descent~$(b,a)$, and so $n-1$ ascents.
The ascents~$(c',b)$ and~$(a,d)$ in~$\rho$ cannot be transposed to reach a permutation in~$X$, as neither the fence $f(c',b,\left]c',b\right[\cap L)$ nor the fence $f(a,d,\left]a,d\right[\cap L)$ is in~$F_R$, as both are above $f(a,b,L)$ in the forcing order.
Similarly, the ascents that lie entirely within~$C$ or~$D$ cannot be transposed, as for any such ascent~$(r,s)$, the corresponding fence $f(r,s,\left]r,s\right[\cap L)$ is above $f(a,b,L)$ in the forcing order.
Moreover, none of the ascents~$(s,s+1)$ that lie entirely within~$A$ or~$B$ can be transposed, as $F_R$ is essential by assumption and so contains none of the fences $f(s,s+1,\emptyset)$, $s\in[n-1]$.
It remains to consider the ascents~$(a-1,c)$ and~$(d',b+1)$.
They can possibly be transposed to reach a permutation in~$X$, but only if the fence~$g:=f(a-1,c,\emptyset)$ or the fence $h:=f(d',b+1,\left]d',b+1\right[)$ is in~$F_R$, which may or may not be the case.
If $g\notin F_R$ and $h\notin F_R$, then we have $\rho=\max(X)$, and so $\desc(\max(X))=1$.
If $g\in F_R$ and $h\notin F_R$, then Lemma~\ref{lem:swap-blocks} shows that $\rho':=C\,b\,A\,a\,D\,B=\max(X)$, and again we get $\desc(\max(X))=1$, as the only descent in $\rho'$ is~$(b,1)$. 
If $g\notin F_R$ and $h\in F_R$, then Lemma~\ref{lem:swap-blocks} shows that $\rho'':=A\,C\,b\,B\,a\,D=\max(X)$, and again we get $\desc(\max(X))=1$, as the only descent in $\rho''$ is~$(n,a)$.
If $g\in F_R$ and $h\in F_R$, then Lemma~\ref{lem:swap-blocks} shows that $\rho''':=C\,b\,B\,A\,a\,D=\max(X)$, and again we get $\desc(\max(X))=1$, as the only descent in $\rho'''$ is~$(n,1)$.

We have shown that $\desc(\min(X))=2$ and $\desc(\max(X))=1$, and so $\asc(\max(X))=n-2$.
Therefore, by Lemma~\ref{lem:neighbor}, the degree of~$X$ in~$Q_R$ is $\desc(\min(X))+\asc(\max(X))=2+(n-2)=n$, which shows that $Q_R$ is not regular.
This completes the proof.
\end{proof}

We now aim to prove that a simple reduced arc diagram implies an $(n-1)$-regular quotient graph.
For this we need two auxiliary lemmas.

\begin{lemma}
\label{lem:minmax-boundary}
Consider a lattice congruence $R\in\cC_n$ and an equivalence class~$X$ such that $\min(X)$ has $n$ at the rightmost position, and $\max(X)=\cdots c\,n\,d\cdots$, where $c,d\in[n-1]$.
Then $(c,d)$ is a descent in $p(\max(X))\in S_{n-1}$.
Similarly, suppose that $\max(X)$ has $n$ at the leftmost position, and $\min(X)=\cdots a\,n\,b\cdots$, where $a,b\in[n-1]$.
Then $(a,b)$ is an ascent in $p(\min(X))\in S_{n-1}$.
\end{lemma}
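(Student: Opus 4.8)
\emph{Plan.} I will prove the first statement and obtain the second from the order‑reversing symmetry $\pi\mapsto\pi^{\mathrm{rev}}$ of the weak order: since reversing words is an order‑reversing bijection of $S_n$, the relation $R^{\mathrm{rev}}$ (with $\pi\equiv_{R^{\mathrm{rev}}}\rho$ iff $\pi^{\mathrm{rev}}\equiv_R\rho^{\mathrm{rev}}$) is again a lattice congruence, and reversal exchanges minima with maxima, ascents with descents, and the leftmost with the rightmost position; so applying the first statement to $R^{\mathrm{rev}}$ and the reversed class $X^{\mathrm{rev}}$ yields the second. So assume $\min(X)$ ends in $n$ and $\max(X)=u\,c\,n\,d\,v$ with $c,d\in[n-1]$ and strings $u,v$; the goal is $c>d$. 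I would set up notation as follows: let $\equiv^*$ be the restriction of $R$ to $S_{n-1}$, a lattice congruence by Lemma~\ref{lem:restrict}; by Lemma~\ref{lem:restriction} the projection $p(X)$ is an equivalence class of $\equiv^*$, hence by Lemma~\ref{lem:interval} an interval $[\mu,\nu]$ in the weak order on $S_{n-1}$, and by Lemma~\ref{lem:restrict-minmax} we have $\mu=p(\min(X))$ and $\nu=p(\max(X))=u\,c\,d\,v$. Since $\min(X)$ ends in $n$, it equals $c_n(\mu)$.

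The first and, I expect, decisive step is to show $c_n(\nu)\in X$. I would take a maximal chain $\mu=\sigma_0\lessdot\sigma_1\lessdot\cdots\lessdot\sigma_m=\nu$ inside the interval $p(X)\seq S_{n-1}$. Each consecutive pair lies in the class $p(X)$ of $\equiv^*$, so $\sigma_i\equiv^*\sigma_{i+1}$, which by the definition of the restriction means $c_n(\sigma_i)\equiv c_n(\sigma_{i+1})$ in $R$; since $c_n(\sigma_0)=c_n(\mu)=\min(X)\in X$, transitivity gives $c_n(\nu)=c_n(\sigma_m)\in X$.

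Then I would finish with a short fence computation. Let $k$ be the position of $n$ in $\max(X)$, so $\max(X)=c_k(\nu)$ and $2\leq k\leq n-1$ (since $c$ precedes and $d$ follows $n$). Along the rail $r(\nu)$ we have $c_n(\nu)\lessdot\cdots\lessdot c_1(\nu)$ in the weak order, hence $\min(X)\leq c_n(\nu)<c_{k+1}(\nu)<c_k(\nu)=\max(X)$, and since $X$ is an interval (Lemma~\ref{lem:interval}) this forces $c_{k+1}(\nu)\in X$. Now $c_k(\nu)=u\,c\,n\,d\,v$ and $c_{k+1}(\nu)=u\,c\,d\,n\,v$ differ by an adjacent transposition of $d$ and $n$, so this cover edge is a bar, and by Theorem~\ref{thm:reading} the fence $f(d,n,L_0)$ with $L_0=\bigl(\{\text{entries of }u\}\cup\{c\}\bigr)\cap\left]d,n\right[$ belongs to $F_R$. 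On the other hand $c_k(\nu)=u\,c\,n\,d\,v=\max(X)$ and $c_{k-1}(\nu)=u\,n\,c\,d\,v$ differ by the adjacent transposition of the ascent $(c,n)$, so $c_{k-1}(\nu)>\max(X)$ lies outside $X$, this cover edge is not a bar, and by Theorem~\ref{thm:reading} the fence $f(c,n,L_1)$ with $L_1=\{\text{entries of }u\}\cap\left]c,n\right[$ is not in $F_R$. Suppose now $c<d$ for contradiction (note $c\neq d$). Then $c\notin\left]d,n\right[$, so $L_0=\{\text{entries of }u\}\cap\left]d,n\right[$, and as $\left]d,n\right[\seq\left]c,n\right[$ we get $L_1\cap\left]d,n\right[=L_0$; by the definition of the forcing order this means $f(c,n,L_1)\prec f(d,n,L_0)$, and since $F_R$ is a downset containing $f(d,n,L_0)$ it must also contain $f(c,n,L_1)$, a contradiction. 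Hence $c>d$, i.e.\ $(c,d)$ is a descent in $p(\max(X))$.

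The main obstacle is exactly the second step: the hypothesis constrains only $\min(X)$, while the conclusion concerns $\max(X)$, so the information must be transported from one end of the equivalence class to the other, and the only leverage for this is the restriction $\equiv^*$ together with the interval structure of its classes. Once $c_n(\nu)\in X$ is secured, the remaining work — identifying the two relevant fences and comparing them in the forcing order — is routine.
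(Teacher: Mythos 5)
Your proof is correct, and it follows the same overall strategy as the paper's: first transport the information from $\min(X)$ to $\max(X)$ by showing $c_n(p(\max(X)))\in X$, then use the forcing structure to conclude that $(c,d)$ must be a descent. The only genuine difference is in the last step. The paper cites Lemma~\ref{lem:swap-blocks}: if $(c,d)$ were an ascent in $\tau$ (the down-neighbor of $\max(X)$ along the rail), then swapping the blocks $A=c\,d$ and $B=n$ around the ascent $(d,n)$ would put a permutation strictly above $\max(X)$ into $X$, a contradiction. You instead unpack this directly: you exhibit the fence $f(d,n,L_0)$ coming from the bar $c_{k+1}(\nu)\lessdot c_k(\nu)$, the fence $f(c,n,L_1)$ coming from the non-bar $c_k(\nu)\lessdot c_{k-1}(\nu)$, and show that under the hypothesis $c<d$ the former would force the latter in the forcing order, contradicting that $F_R$ is a downset. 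This is a valid and rigorous alternative; it essentially reproves the relevant instance of Lemma~\ref{lem:swap-blocks} from Theorem~\ref{thm:reading} instead of invoking it. One small inefficiency: your chain argument for $c_n(\nu)\in X$ is unnecessary. Since $\mu,\nu\in p(X)$ lie in the same $\equiv^*$-class, $\mu\equiv^*\nu$ directly, hence $c_n(\mu)\equiv c_n(\nu)$ by the definition of restriction, and $c_n(\mu)=\min(X)\in X$ gives $c_n(\nu)\in X$ immediately. Your reversal argument for the second statement is correct and is precisely the paper's "analogous" claim made explicit.
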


\begin{proof}
Consider an equivalence class~$X$ such that $\pi:=\min(X)$ has $n$ at the rightmost position, and $\rho:=\max(X)=\cdots c\,n\,d\cdots$, where $c,d\in[n-1]$.
Let $\pi':=p(\pi)\in S_{n-1}$ and $\rho':=p(\rho)\in S_{n-1}$.
By Lemma~\ref{lem:restrict-minmax}, $\pi'$ and~$\rho'$ are the minimum and maximum of the equivalence class~$p(X)$ of the restriction~$R^*$, and as $\pi=c_n(\pi')=\pi'\,n$, we also have that $\sigma:=c_n(\rho')=\rho'\,n\in X$.
Note that $\rho$ is obtained from~$\sigma$ by moving $n$ to the left until the entry~$c$ is directly left of it.
In particular, the down-neighbor~$\tau$ of~$\rho$ obtained by transposing~$n$ and~$d$ satisfies $\tau\equiv \rho$ (recall Lemma~\ref{lem:interval}).
By Lemma~\ref{lem:swap-blocks}, it follows that $(c,d)$ must be a descent in~$\tau$, as otherwise, the up-neighbor~$\tau'$ of~$\rho$ obtained by transposing~$c$ and~$n$ would also satisfy $\tau'\equiv \rho$, contradicting the fact that $\rho$ is the maximum of~$X$.
Consequently $(c,d)$ is also a descent in~$\sigma=\rho'\,n$ and in $\rho'=p(\max(X))$.
The proof of the second part of the lemma is analogous.
\end{proof}

For any permutation $\pi\in S_n$ and any integers $a,b\in [n]$ with $a<b$, we let $L(\pi,a,b)$ denote the set of all entries of~$\pi$ that are to the left of~$a$, and whose values are in the interval~$]a,b[$.
For example, for $\pi=817632459$ we have $L(\pi,1,3)=\emptyset$, $L(\pi,2,5)=\{3\}$, and $L(\pi,3,7)=\{6\}$.

\begin{lemma}
\label{lem:minmax-middle}
Consider a lattice congruence $R\in\cC_n$ and an equivalence class~$X$ such that $\pi:=\min(X)=\cdots a\,n\,b\cdots$ and $\rho:=\max(X)=\cdots c\,n\,d\cdots$, where $a,b,c,d\in[n-1]$.
Let $\pi'$ be the last permutation in~$X$ obtained from~$\pi$ by moving~$n$ to the left, and let $\rho'$ be the last permutation in~$X$ obtained from $\rho$ by moving~$n$ to the right.
Then we have the following:
\begin{enumerate}[label=(\roman*), leftmargin=8mm, noitemsep, topsep=3pt plus 3pt]
\item the entry left of~$n$ in $\pi'$ is~$c$;
\item the entry right of~$n$ in~$\rho'$ is~$b$;
\item if $c\neq a$, then $c$ is to the left of~$a$ in~$\pi$, and we have $c,b>x$ for all $x$ between~$c$ and~$n$ in~$\pi$, in particular $a<b$;
\item if $b\neq d$, then $b$ is to the right of~$d$ in~$\rho$, and we have $c,b>x$ for all $x$ between~$n$ and~$b$ in~$\rho$, in particular $c>d$;
\item for any entry~$x$ between~$c$ and~$n$ in~$\pi$, the fence $f(x,n,L(\pi,x,n))$ is in~$F_R$;
\item for any entry~$x$ between~$n$ and~$b$ in~$\rho$, the fence $f(x,n,L(\rho,x,n))$ is in~$F_R$;
\item the fences $f(b,n,L(\pi,b,n))$ and $f(x,c,L(\pi,x,c))$, where $x$ is the entry right of~$c$ in~$\pi$, are not in~$F_R$;
\item the fences $f(c,n,L(\rho,c,n))$ and $f(x,b,L(\rho,x,b))$, where $x$ is the entry left of~$b$ in~$\rho$, are not in~$F_R$.
\end{enumerate}
\end{lemma}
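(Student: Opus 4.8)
The plan is to analyze the ``staircase'' of permutations in $X$ that one traverses by sliding the entry $n$ from its position in $\min(X)$ to the left, and from its position in $\max(X)$ to the right, and to read off what the forcing order forces along the way. Throughout, the key tool is Lemma~\ref{lem:neighbor} (degrees are governed by descents of the minimum and ascents of the maximum, and in particular the minimum has no "usable" descent beyond those that leave $X$, and symmetrically for the maximum), together with Lemma~\ref{lem:swap-blocks} and its descent-analogue, which let us swap whole blocks of small/large entries around $n$ once a single transposition across $n$ stays inside $X$.

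First I would establish (v) and (vi), since these are the structural backbone. Start from $\pi=\min(X)=\cdots a\,n\,b\cdots$ and repeatedly move $n$ left as long as the resulting permutation is still in $X$; since $X$ is an interval (Lemma~\ref{lem:interval}), the permutations obtained form a chain and $\pi'$ is well-defined. Each left-move of $n$ past some entry $x$ is an $(x,n)$-transposition, and it lies inside $X$ iff the corresponding fence $f(x,n,L)$ is in $F_R$, where $L$ is exactly the set of entries $<n$ and in $]x,n[$ lying to the left of $x$ in the current permutation; but moving $n$ does not change the relative order of the entries $\neq n$, so that set is $L(\pi,x,n)$, giving (v). Statement (vi) is the mirror image starting from $\rho=\max(X)$ and moving $n$ right, using the descent-version of Lemma~\ref{lem:swap-blocks}.

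Next I would pin down $\pi'$ and $\rho'$ to obtain (i)--(ii) and then (iii)--(iv), (vii)--(viii). For (i): $\pi'$ is the last element of the left-chain, so if $c^*$ is the entry immediately left of $n$ in $\pi'$, then the $(c^*,n)$-transposition in $\pi'$ is \emph{not} a bar, i.e.\ $\pi'$ has a descent $(n,\cdot)$ that can be transposed only by moving $n$ right (back into $X$), and moving $c^*$ past $n$ to the left leaves $X$. Applying Lemma~\ref{lem:swap-blocks} at the ascent $(c^*,n)$ of $\pi'$, any entry $<c^*$ appearing in the block just left of $c^*$ could be swapped across; pushing this through, $\pi'$ is forced to coincide with $\rho$ on everything to the left of $n$ after the relevant block-swaps, which identifies $c^*=c$; more carefully, one shows $X(\text{the up-neighbor of }\pi'\text{ by swapping }c^*,n)$ must equal $X(\rho)$-adjacent data, forcing $c^*=c$ because $\rho=\max(X)$ has $n$ directly right of $c$. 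This is essentially the argument already used in the proof of Lemma~\ref{lem:minmax-middle} via Lemma~\ref{lem:swap-blocks}, extended one step. Statement (ii) is symmetric. Given (i)--(ii), statement (iii) follows: if $c\neq a$ then along the left-chain from $\pi$ the entry $n$ passed over $c$, so $c$ is left of $a$ in $\pi$; every entry $x$ strictly between $c$ and $n$ in $\pi$ got jumped over, so by (v) the fence $f(x,n,L(\pi,x,n))$ is in $F_R$, and a non-simple/comparability argument in the forcing order (an $(x,n)$-fence forces the $(y,n)$-fences for $x<y<n$, and also interacts with $f(b,n,\cdot)$) forces $c,b>x$; the inequality $a<b$ then drops out because $b$ sits directly right of $n$ in $\pi$ and $a$ directly left, and if $a>b$ then $(n,b)$ would combine with the forced fences to make $(a,n)$ a bar, contradicting $\pi=\min(X)$. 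Statement (iv) is the mirror.

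Finally, (vii) and (viii): by Lemma~\ref{lem:neighbor}, $\pi=\min(X)$ has the property that none of its descents can be transposed to stay in $X$; the descent $(n,b)$ in $\pi$ corresponds to the fence $f(b,n,L(\pi,b,n))$, so that fence is not in $F_R$; and the descent $(c^{\dagger},\cdot)$ where $c^{\dagger}$ is right of $c$ — actually, after moving $n$ left in $\pi$ we would reach a permutation with $c$ adjacent to the entry $x$ right of $c$ forming a descent $(c,x)$ or, in $\pi$ itself, the relevant non-bar is the pair making $f(x,c,L(\pi,x,c))$; this fence is not in $F_R$ because it lies above $f(b,n,\cdot)$-related fences in the forcing order in a way that, combined with (v), would force $\pi$ not to be minimal. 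Statement (viii) is symmetric via ascents of $\max(X)$.

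\smallskip
The main obstacle I anticipate is (i)--(ii): cleanly proving that the left-slide of $n$ from $\min(X)$ terminates with exactly the entry $c$ (the one adjacent to $n$ in $\max(X)$) to its left, rather than some other entry. This requires combining the block-swap Lemma~\ref{lem:swap-blocks} with the fact that $\rho=\max(X)$ and $\pi=\min(X)$ are the \emph{extreme} elements of the interval $X$, and carefully tracking which fences the forcing order activates; the subtlety is that the left-slide and the block-swaps may interleave, so one must argue that no entry smaller than $c$ can end up immediately left of $n$ (else $\pi'$ would not be ``last'') and no entry larger than the current one blocks the slide prematurely (using (v) and forcing-order comparabilities). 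Once (i)--(ii) are nailed down, the remaining items are bookkeeping with the forcing order and Lemma~\ref{lem:neighbor}.
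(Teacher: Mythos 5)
Your high-level plan (slide $n$ from $\min(X)$ leftward and from $\max(X)$ rightward, read off which fences are forced along the way using Lemma~\ref{lem:swap-blocks} and the forcing order) is the same as the paper's, and your treatment of (v)--(vi) is essentially correct. However, there is a genuine gap exactly where you flag it: you never actually prove (i)--(ii). Your argument says that after applying Lemma~\ref{lem:swap-blocks} repeatedly ``$\pi'$ is forced to coincide with $\rho$ on everything to the left of $n$,'' which is both vague and, taken literally, false in general --- the prefix of $\pi'$ to the left of $n$ need not equal the prefix of $\rho$; only the entry immediately left of $n$ must agree. You also refer circularly to ``the argument already used in the proof of Lemma~\ref{lem:minmax-middle}.'' The missing idea is a specific contradiction argument: assume the entry left of $n$ in $\pi'$ is $e\neq c$, show $e$ lies to the right of $c$ in $\pi'$, deduce $(n,e)\notin\inv(\rho)$ (hence $e$ is left of $c$ and of $n$ in $\rho$, hence $c<e$), and then construct a chain of permutations from $\pi'$ up to $\rho$ that keeps $e$ and $n$ adjacent by pairing each swap $(e',e)\to(e,e')$ (with $e'<e$) with the corresponding swap $(e',n)\to(n,e')$ via Lemma~\ref{lem:swap-blocks}; since $c$ and $e$ must eventually swap, the same pairing makes $(c,n)$ a bar, contradicting $(n,c)\notin\inv(\rho)$.

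Your argument for (iii) and (vii) is also weaker than it needs to be once (i) is in hand. The paper's derivation is much more direct: from (i), $c$ lies left of $a$ in $\pi$; since $n$ cannot move past $c$ (as $(n,c)\notin\inv(\rho)$) nor past $b$ (as $\pi=\min(X)$), Lemma~\ref{lem:swap-blocks} forces $b,c>e$ for the maximal $e$ between $c$ and $n$. For (vii), $f(b,n,L(\pi,b,n))\notin F_R$ because otherwise $\pi$ would have a down-neighbor in $X$; and $f(x,c,L(\pi,x,c))\notin F_R$ because $c>x$ (from (iii)) would again give $\pi$ a down-neighbor in $X$. You instead invoke fuzzy ``non-simple/comparability arguments'' and ``forcing-order comparabilities'' that are not spelled out and would need substantive work to justify.

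In short: the skeleton matches the paper, but the proof of (i)--(ii), which you correctly identify as the crux, is not present; and (iii) and (vii) should be derived directly from (i) rather than by appealing to unspecified forcing-order manipulations.
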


\begin{figure}
\includegraphics{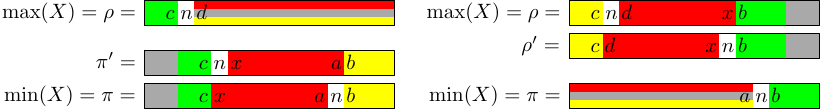}
\caption{Illustration of Lemma~\ref{lem:minmax-middle}.}
\label{fig:minmax}
\end{figure}

\begin{proof}
We only need to prove~(i), (iii), (v) and~(vii), as the other four statements are symmetric.

We first prove~(i).
Suppose for the sake of contradiction that this is not the case and that there is another entry~$e\neq c$ left of~$n$ in~$\pi'$.
Note that $e$ must be to the right of~$c$ in~$\pi'$ and $\pi$, as otherwise $(n,c)$ would be an inversion in $\pi'$, but not in $\rho$, contradicting the fact that $\rho$ is the maximum of~$X$.
We let $\pi''\notin X$ be the permutation obtained from $\pi'$ by transposing~$e$ and $n$, i.e., $\inv(\pi'')=\inv(\pi')\cup\{(n,e)\}$.
It follows that $(n,e)\notin\inv(\rho)$, as otherwise $\pi''$ would be contained in the interval~$[\pi,\rho]=X$.
This means that $e$ is left of~$n$ and left of~$c$ in~$\rho$.
This implies that $c<e$, as otherwise $(c,e)$ would be an inversion in~$\pi'$, but not in~$\rho$.

We now move up in the weak order from~$\pi'$ to~$\rho$, creating a sequence of permutations that all contain the ascent~$(e,n)$, as follows:
Starting from~$\pi'$, we repeatedly choose an arbitrary ascent that we can transpose to stay inside~$X$.
First note that $(n,x)$, $x\in[n-1]$, is never an ascent, so $n$ never moves to the right.
Also $(e,n)$ can never be transposed, as $(n,e)\notin\inv(\rho)$.
Whenever we encounter a transposition that involves the entry~$e$, then it must be a transposition of the form $(e',e)\rightarrow (e,e')$ with $e'<e$ (otherwise the inversion set would shrink), and we then immediately also perform the transposition $(e',n)\rightarrow (n,e')$, keeping~$e$ and~$n$ next to each other, and reaching a permutation in~$X$ by Lemma~\ref{lem:swap-blocks}.
As $(c,e)\notin\inv(\pi')$ and $(c,e)\in\inv(\rho)$, we must encounter a step in which~$c$ and~$e$ are transposed.
However, by the same reasoning, we can then also transpose~$c$ and~$n$ to reach another permutation in~$X$, a contradiction to $(n,c)\notin\inv(\rho)$.
This completes the proof of~(i).

We now prove~(iii).
The fact that $c$ is left of~$a$ in~$\pi$ is an immediate consequence of~(i).
Note that $\pi'$ is obtained from~$\pi$ by moving~$n$ to the left until $n$ is directly right of~$c$, and $n$ cannot move further as $(n,c)\notin\inv(\rho)$.
Conversely, $\pi$ is obtained from~$\pi'$ by moving~$n$ to the right until $n$ is directly left of~$b$, and $n$ cannot move further as $\pi$ is the minimum of~$X$.
In particular, the entry~$n$ can be moved across the largest entry~$e$ between~$c$ and~$n$ in~$\pi$.
Lemma~\ref{lem:swap-blocks} therefore shows that $b,c>e$, as otherwise $n$ could move to the left of~$c$ in~$\pi'$ or to the right of~$b$ in~$\pi$.

We now prove~(v).
This follows immediately by considering all permutations encountered in~$X$ between~$\pi$ to~$\pi'$, by moving $n$ to the left.

It remains to prove~(vii).
It is clear that $f(b,n,L(\pi,b,n))$ is not in~$F_R$, as we could otherwise transpose $b$ and $n$ in $\pi$, reaching a down-neighbor of~$\pi$ in~$X$.
It is also clear that $f(x,c,L(\pi,x,c))$ is not in~$F_R$, as we know that $c>x$ from~(iii), and so we could transpose~$c$ and~$x$, reaching another down-neighbor of~$\pi$ in~$X$.
\end{proof}

With these lemmas in hand, we are now ready to establish a tight lower bound for the minimum degree of quotient graphs~$\cQ_n$.

\begin{theorem}
\label{thm:min-deg}
For every essential lattice congruence $R\in\cC_n^*$, the minimum degree of the quotient graph~$Q_R$ is~$n-1$.
\end{theorem}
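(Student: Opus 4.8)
The plan is to combine a trivial upper bound with an inductive lower bound that exploits the restriction~$R^*$. For the upper bound: since $R$ is essential, no fence $f(s,s+1,\emptyset)$ lies in~$F_R$, so the class of the identity permutation~$\ide_n$ is a singleton, and by Lemma~\ref{lem:neighbor} its degree in~$Q_R$ is $\desc(\ide_n)+\asc(\ide_n)=0+(n-1)=n-1$; hence the minimum degree of~$Q_R$ is at most~$n-1$. For the matching lower bound I would induct on~$n$, the cases $n\le 2$ being immediate (for $n=2$ the only essential congruence is trivial and $Q_R$ is a single edge). In the inductive step I would first observe that the restriction~$R^*$ is again an essential lattice congruence on~$S_{n-1}$: by Lemma~\ref{lem:restrict-fences} its fences are the $f^*(a,b,L)$ with $f(a,b,L)\in F_R$ and $b<n$, and such a fence is non-essential only if $(a,b)=(a,a+1)$ and $L=\emptyset$, which would force the non-essential fence $f(a,a+1,\emptyset)$ into~$F_R$. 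By the induction hypothesis the minimum degree of~$Q_{R^*}$ is $n-2$, which via Lemma~\ref{lem:neighbor} is exactly the statement $\desc(\min Y)\ge\desc(\max Y)$ for every class~$Y$ of~$R^*$.

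Next, fix a class~$X$ of~$R$ and set $\pi:=\min X$, $\rho:=\max X$. By Lemmas~\ref{lem:restriction} and~\ref{lem:restrict-minmax}, $\pi':=p(\pi)$ and $\rho':=p(\rho)$ are the minimum and maximum of the class $p(X)$ of~$R^*$, so $\desc(\pi')+\asc(\rho')\ge\desc(\rho')+\asc(\rho')=n-2$. The key elementary observation is that inserting the value~$n$ into a permutation of~$S_{n-1}$ leaves one of the statistics $\asc,\desc$ unchanged and raises the other by exactly one; writing $\desc(\pi)=\desc(\pi')+\delta_\pi$ and $\asc(\rho)=\asc(\rho')+\delta_\rho$ with $\delta_\pi,\delta_\rho\in\{0,1\}$, Lemma~\ref{lem:neighbor} gives $\deg_{Q_R}(X)=\desc(\pi')+\asc(\rho')+\delta_\pi+\delta_\rho\ge (n-2)+\delta_\pi+\delta_\rho$. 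So everything reduces to ruling out $\delta_\pi=\delta_\rho=0$. A short bookkeeping check shows $\delta_\pi=0$ precisely when $n$ is the last entry of~$\pi$ or $n$ was inserted between the two entries of a descent of~$\pi'$, and symmetrically $\delta_\rho=0$ precisely when $n$ is the first entry of~$\rho$ or $n$ was inserted between the two entries of an ascent of~$\rho'$.

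This leaves four configurations to exclude, and this is where essentiality and the two preparatory lemmas enter. If $n$ is last in~$\pi$ and first in~$\rho$, then $n-1$ precedes~$n$ in~$\pi$ but follows it in~$\rho$; along any saturated chain from~$\pi$ to~$\rho$ in the weak order, which lies inside the interval $X=[\pi,\rho]$ by Lemma~\ref{lem:interval}, the single cover that introduces the inversion $(n,n-1)$ must be an adjacent transposition of the values $n-1$ and $n$, producing an $(n-1,n)$-bar, hence $f(n-1,n,\emptyset)\in F_R$ by Theorem~\ref{thm:reading}, contradicting essentiality. If $n$ is last in~$\pi$ while $\rho=\cdots c\,n\,d\cdots$, then Lemma~\ref{lem:minmax-boundary} forces $(c,d)$ to be a \emph{descent} of~$\rho'$, contradicting that $n$ lies in an ascent of~$\rho'$; the mirror configuration ($n$ first in~$\rho$, $\pi=\cdots a\,n\,b\cdots$) is excluded the same way. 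Finally, if $\pi=\cdots a\,n\,b\cdots$ and $\rho=\cdots c\,n\,d\cdots$ with $(a,b)$ a descent of~$\pi'$ and $(c,d)$ an ascent of~$\rho'$, then part~(iii) of Lemma~\ref{lem:minmax-middle} forces $c=a$ (otherwise $a<b$) and part~(iv) forces $b=d$ (otherwise $c>d$), so that $a>b$ and $a<b$ hold simultaneously, again a contradiction. Hence $\delta_\pi+\delta_\rho\ge 1$ in all cases, so $\deg_{Q_R}(X)\ge n-1$, which together with the upper bound finishes the induction.

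I expect the main obstacle to be precisely this final four-way case analysis, and in particular the ``$n$ in the middle of both $\min X$ and $\max X$'' case, which is exactly why Lemmas~\ref{lem:minmax-boundary} and~\ref{lem:minmax-middle} were isolated beforehand; once those are available the individual cases are short, and the only other delicate point is getting the effect of inserting~$n$ on $\asc$ and $\desc$ (the definition of $\delta_\pi,\delta_\rho$) exactly right.
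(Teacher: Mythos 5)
Your proposal is correct and follows essentially the same approach as the paper: induction on~$n$ via the restriction~$R^*$, reduction to $\desc(\min X)+\asc(\max X)$ by Lemma~\ref{lem:neighbor}, and a case analysis on the position of~$n$ in $\min X$ and $\max X$ that invokes Lemmas~\ref{lem:minmax-boundary} and~\ref{lem:minmax-middle} in exactly the places the paper does. The only cosmetic differences are that you package the case analysis through the increments $\delta_\pi,\delta_\rho$ rather than explicit per-case formulas, and you handle the "$n$ last in $\min X$, $n$ first in $\max X$" configuration by a direct saturated-chain argument where the paper instead invokes Lemma~\ref{lem:rail}.
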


Pilaud and Santos proved in~\cite{MR3964495} that for every essential lattice congruence, $Q_R$ is the graph of an $(n-1)$-dimensional polytope.
This in particular implies Theorem~\ref{thm:min-deg}.
Nevertheless, in this paper we provide a purely combinatorial proof of the theorem, with the goal of later improving the estimates in the proof when proving Theorem~\ref{thm:regular}.

\begin{proof}
We prove the theorem by induction on~$n$.
For $n=1$ there is only the trivial lattice congruence for the weak order on~$S_n=\{1\}$, and the corresponding quotient graph is an isolated vertex which indeed has minimum degree~$n-1=0$.
This settles the base case for the induction.

Let $n\geq 2$ and consider an essential lattice congruence~$R\in\cC_n^*$.
The equivalence class~$X$ that contains the identity permutation~$\ide_n$, contains no other permutations by the assumption that $R$ is essential, and so the degree of~$X=\{\ide_n\}$ in~$Q_R$ is exactly~$n-1$ by Lemma~\ref{lem:neighbor}.
By Lemma~\ref{lem:neighbor}, it therefore suffices to show that for every equivalence class~$X$ of $R$ and its minimum~$\pi:=\min(X)$ and maximum~$\rho:=\max(X)$, we have that $\desc(\pi)+\asc(\rho)\geq n-1$.

For this consider the position of the entry~$n$ in both~$\pi$ and~$\rho$.
By the assumption that~$R$ is essential, we know that $f(n-1,n,\emptyset)\notin F_R$, which implies $\ide_n\not\equiv c_{n-1}(\ide_{n-1})$.
Applying Lemma~\ref{lem:rail} shows that $n$ cannot be simultaneously at the rightmost position of~$\pi$ and at the leftmost position of~$\rho$.
Consequently, we are in one of five possible cases:
\begin{enumerate}[label=(\alph*), leftmargin=8mm, noitemsep, topsep=3pt plus 3pt]
\item both $\pi$ and $\rho$ have $n$ at the rightmost position.
\item both $\pi$ and $\rho$ have $n$ at the leftmost position.
\item $\pi$ has $n$ at the rightmost position, and $\rho=\cdots c\,n\,d\cdots$, where $c,d\in[n-1]$.
\item $\rho$ has $n$ at the leftmost position, and $\pi=\cdots a\,n\,b\cdots$, where $a,b\in[n-1]$.
\item $\pi=\cdots a\,n\,b\cdots$ and $\rho=\cdots c\,n\,d\cdots$, where $a,b,c,d\in[n-1]$.
\end{enumerate}
Cases~(c) and~(d) are exactly the ones discussed in Lemma~\ref{lem:minmax-boundary}, and case~(e) is exactly the one discussed in Lemma~\ref{lem:minmax-middle}.
We only prove~(a), (c) and~(e), as the proof of~(b) is analogous to~(a), and the proof of~(d) is analogous to~(c).

By Lemma~\ref{lem:restrict-minmax}, Lemma~\ref{lem:neighbor}, and by induction we know that
\begin{equation}
\label{eq:ind-lb}
\desc(p(\pi))+\asc(p(\rho))\geq n-2.
\end{equation}

First consider case~(a) above.
As $n$ is at the rightmost position in~$\pi$ and~$\rho$, we have
\begin{equation}
\label{eq:daXt}
\desc(\pi)+\asc(\rho)=\desc(p(\pi))+\asc(p(\rho))+1,
\end{equation}
where the +1 comes from the ascent involving~$n$ in~$\rho$.
Combining~\eqref{eq:ind-lb} and~\eqref{eq:daXt} yields
\begin{equation}
\label{eq:dega}
\desc(\pi)+\asc(\rho)\geq (n-2)+1=n-1,
\end{equation}
with equality if and only if~\eqref{eq:ind-lb} holds with equality.

Now consider case~(c) above.
As $n$ is at the rightmost position in~$\pi$ and $n$ is between~$c$ and~$d$ in~$\rho$, we have
\begin{equation}
\label{eq:daXb}
\desc(\pi)+\asc(\rho)=\desc(p(\pi))+\asc(p(\rho))+1-\asc(c\,d),
\end{equation}
where the +1 comes from the ascent~$(c,n)$ in~$\rho$.
By Lemma~\ref{lem:minmax-boundary}, $(c,d)$ is a descent, so $\asc(c\,d)=0$, and hence combining~\eqref{eq:ind-lb} and~\eqref{eq:daXb} yields
\begin{equation}
\label{eq:degc}
\desc(\pi)+\asc(\rho)\geq (n-2)+1=n-1,
\end{equation}
with equality if and only if~\eqref{eq:ind-lb} holds with equality.

Now consider case~(e) above.
In this case we have
\begin{equation}
\label{eq:daXm}
\desc(\pi)+\asc(\rho)=\desc(p(\pi))+1-\desc(a\,b)+\asc(p(\rho))+1-\asc(c\,d),
\end{equation}
where the +1s come from the descent~$(n,b)$ in~$\pi$ and the ascent~$(c,n)$ in~$\rho$, respectively.
We first consider the subcase that~$c=a$ and~$b=d$.
In this case $(a,b)=(c,d)$ is either a descent or an ascent, so in any case $\desc(a\,b)+\asc(c\,d)=1$, and hence combining~\eqref{eq:ind-lb} and~\eqref{eq:daXm} yields
\begin{equation}
\label{eq:dege1}
\desc(\pi)+\asc(\rho)\geq (n-2)+1=n-1,
\end{equation}
with equality if and only if~\eqref{eq:ind-lb} holds with equality.

We now consider the subcase that~$c=a$ and $b\neq d$.
From Lemma~\ref{lem:minmax-middle}~(iv) we obtain that~$c>d$, i.e., $\asc(c\,d)=0$, and hence combining~\eqref{eq:ind-lb} and~\eqref{eq:daXm} yields
\begin{equation}
\label{eq:dege2}
\desc(\pi)+\asc(\rho)\geq (n-2)+2-\desc(a\,b)\geq n-1,
\end{equation}
with equality if and only if~\eqref{eq:ind-lb} holds with equality and~$\desc(a\,b)=1$.

The subcase~$c\neq a$ and $b=d$ is similar, and yields
\begin{equation}
\label{eq:dege3}
\desc(\pi)+\asc(\rho)\geq (n-2)+2-\asc(c\,d)\geq n-1,
\end{equation}
with equality if and only if~\eqref{eq:ind-lb} holds with equality and~$\asc(c\,d)=1$.

It remains to consider the subcase $c\neq a$ and $b\neq d$.
From Lemma~\ref{lem:minmax-middle}~(iii) and~(iv) we know that~$a<b$ and~$c>d$, i.e., $\desc(a\,b)=0$ and $\asc(c\,d)=0$, and hence combining~\eqref{eq:ind-lb} and~\eqref{eq:daXm} yields
\begin{equation}
\label{eq:dege4}
\desc(\pi)+\asc(\rho)\geq (n-2)+2=n>n-1.
\end{equation}
This completes the proof of the theorem.
\end{proof}

\begin{wrapfigure}{r}{0.4\textwidth}
\flushright
\includegraphics{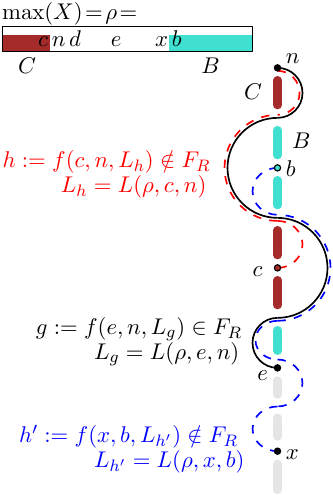}
\captionsetup{width=.85\linewidth}
\caption{Illustration of the proof of Theorem~\ref{thm:regular}.}
\iffull
\vspace{-16mm}
\fi
\ifnotfull
\vspace{-8mm}
\fi
\label{fig:regular}
\end{wrapfigure}

We are now in position to prove the main result of this section, a characterization of regular quotient graphs via their arc diagram.

\begin{theorem}
\label{thm:regular}
The regular quotient graphs~$\cR_n$ are obtained from exactly those lattice congruences~$\cC_n^*$ that have a simple reduced arc diagram.
\end{theorem}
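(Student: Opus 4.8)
The plan is to prove the two directions separately. One direction is already done: Lemma~\ref{lem:non-simple} shows that a non-simple reduced arc diagram produces a non-regular quotient graph. So the remaining task is the converse: if $R\in\cC_n^*$ has a simple reduced arc diagram, then $Q_R$ is regular, and indeed $(n-1)$-regular (matching the minimum degree established in Theorem~\ref{thm:min-deg}). By Lemma~\ref{lem:neighbor} this reduces to showing that $\desc(\min(X))+\asc(\max(X))=n-1$ for every equivalence class~$X$ of~$R$.

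First I would check that simplicity of the reduced arc diagram passes to the restriction~$R^*$. By Lemma~\ref{lem:restrict-fences} the fences of~$R^*$ are precisely the fences of~$F_R$ not incident to the point~$n$, and a direct inspection of the forcing order shows that a fence $f(a,b,L)$ with $b<n$ is maximal in~$F_R$ if and only if it is maximal in~$F_{R^*}$; hence the maximal fences of~$R^*$ are exactly the maximal fences of~$R$ avoiding the point~$n$, which are simple, and~$R^*$ is again essential. Thus the hypotheses are preserved and an induction on~$n$ is available.

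The main line of argument is then to re-run the induction from the proof of Theorem~\ref{thm:min-deg}, but tracking \emph{equality} throughout. The base cases are trivial. In the inductive step, the induction hypothesis gives that $Q_{R^*}$ is $(n-2)$-regular, so by Lemmas~\ref{lem:restriction}, \ref{lem:restrict-minmax} and~\ref{lem:neighbor} inequality~\eqref{eq:ind-lb} holds with equality for every class. Substituting this into the case analysis~(a)--(e) of that proof, equations~\eqref{eq:dega}, \eqref{eq:degc} and~\eqref{eq:dege1} immediately yield $\desc(\min(X))+\asc(\max(X))=n-1$ in cases~(a), (b), (c), (d) and in the subcase $c=a$, $b=d$ of~(e). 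Only the three mixed subcases of~(e) remain, and this is precisely the place where the simplicity hypothesis is used essentially, i.e.\ where the estimates of Theorem~\ref{thm:min-deg} must be sharpened. Concretely I would show: (1)~the subcase $c\neq a$ and $b\neq d$ cannot occur; (2)~in the subcase $c=a$, $b\neq d$ one has $a>b$, so $\desc(a\,b)=1$ and~\eqref{eq:dege2} becomes an equality; and symmetrically (3)~in the subcase $c\neq a$, $b=d$ one has $c<d$, so $\asc(c\,d)=1$ and~\eqref{eq:dege3} becomes an equality.

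The proof of~(1)--(3) is a fence-chasing argument built on Lemma~\ref{lem:minmax-middle}. Taking the entry immediately next to the interior occurrence of~$n$ in $\min(X)$ (resp.\ in $\max(X)$), part~(v) (resp.\ part~(vi)) puts the corresponding rail fence into~$F_R$; since that rail fence has a non-empty $L$-set (it contains the neighbour of~$n$), running up the chain of rail fences to the simple maximal fence of~$F_R$ above it pins this maximal fence down to a left or right arc sharing an endpoint with the rail fence, and one then plays it off against the fences that parts~(vii) and~(viii) forbid from~$F_R$; combined with the strict inequalities $a<b$ and $c>d$ coming from parts~(iii) and~(iv), this forces a contradiction in case~(1) and the asserted orientation in cases~(2) and~(3) (the picture is Figure~\ref{fig:regular}). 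I expect the hard part to be organizing this last step: it requires a finite but delicate case distinction --- on whether each simple maximal fence encountered is a left or a right arc, and on how its endpoints interleave with $a,b,c,d$ --- together with repeated translation between the ``$L$ empty or full'' rigidity of simple arcs, the forcing order, and the positions of entries relative to~$n$ in $\min(X)$ and $\max(X)$. Everything else --- the reduction to two implications, the inheritance of simplicity under restriction, and the bookkeeping in cases~(a)--(d) --- is routine given Theorem~\ref{thm:min-deg} and the lemmas already established.
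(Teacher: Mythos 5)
Your proposal follows essentially the same line as the paper's proof: Lemma~\ref{lem:non-simple} handles one direction, and for the converse you rerun the induction of Theorem~\ref{thm:min-deg} tracking equality, observe that simplicity and essentiality pass to the restriction, reduce to the three mixed subcases of case~(e), and sharpen them via Lemma~\ref{lem:minmax-middle} together with forcing among simple arcs --- which is exactly what the paper does. The only imprecision is in your sketch of the final fence-chasing step: the paper works with the \emph{maximum} entry~$e$ between $n$ and $b$ in $\max(X)$ (not the entry adjacent to~$n$, which the paper explicitly discards as irrelevant), and what makes the resulting fence $f(e,n,L_g)$ non-simple is that $c$ lies in~$L_g$ \emph{and} $b$ lies in its complement $\left]e,n\right[\setminus L_g$ --- both sides nonempty --- rather than merely $L_g$ being nonempty as you state.
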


After this paper was published, we learnt that Theorem~\ref{thm:regular} can be derived from Theorem~1.13 in~\cite{demonet_et_al_2018}, which characterizes regular quotient graphs in terms of the minimal sets of join-irreducible elements that need to be contracted to generate the congruence.

\begin{proof}
By Lemma~\ref{lem:non-simple}, if the reduced arc diagram of a lattice congruence~$R\in\cC_n^*$ is not simple, then the quotient graph~$Q_R$ is not regular.
In the following we will prove the converse, that if the reduced arc diagram of~$R$ is simple, then the quotient graph $Q_R$ is $(n-1)$-regular.
We argue by induction on~$n$, using that by Lemma~\ref{lem:restrict-fences}, the arc diagram of the restriction of a lattice congruence is obtained by removing the highest point labeled~$n$ and all arcs incident with it.
In particular, removing the highest point of a simple reduced arc diagram produces another simple reduced arc diagram.

For the induction proof we closely follow the proof of Theorem~\ref{thm:min-deg} given before, and show that all inequalities in that proof are actually tight if we add the assumption of a simple reduced arc diagram.
So consider a lattice congruence~$R\in\cC_n^*$ with a simple reduced arc diagram, an arbitrary equivalence class~$X$ of~$R$, and let $\pi:=\min(X)$ and $\rho:=\max(X)$.
We aim to prove that $\desc(\pi)+\asc(\rho)=n-1$, assuming by induction that~\eqref{eq:ind-lb} holds with equality, i.e., we have
\begin{equation}
\label{eq:ind-lb-tight}
\desc(p(\pi))+\asc(p(\rho))=n-2.
\end{equation}

We now consider the same cases (a)--(e) as in the proof of Theorem~\ref{thm:min-deg}.
The cases~(a) and~(b) are easy, as~\eqref{eq:dega} holds with equality by~\eqref{eq:ind-lb-tight}.
Similarly, the cases~(c) and~(d) are easy, as~\eqref{eq:degc} holds with equality by~\eqref{eq:ind-lb-tight}.
It remains to consider the case~(e).
The subcase~$c=a$ and~$b=d$ is again easy, as~\eqref{eq:dege1} holds with equality because by~\eqref{eq:ind-lb-tight}.
We consider the remaining three subcases of~(e) and show the following:
\begin{enumerate}[label=(e\arabic*), leftmargin=8mm, noitemsep, topsep=3pt plus 3pt]
\item If $c=a$ and $b\neq d$ we have that $\desc(a\,b)=1$.
From this it follows that~\eqref{eq:dege2} holds with equality by~\eqref{eq:ind-lb-tight}.
\item If $c\neq a$ and $b=d$ we have that $\asc(c\,d)=1$.
From this it follows that~\eqref{eq:dege3} holds with equality by~\eqref{eq:ind-lb-tight}.
\item The subcase $c\neq a$ and $b\neq d$ cannot occur (recall the strict inequality~\eqref{eq:dege4}).
\end{enumerate}
In proving (e1)--(e3), we will use the assumption that the reduced arc diagram is simple.
Note that claims (e1)--(e3) follow immediately from the next two claims:
\begin{enumerate}[label=(e\arabic*'), leftmargin=10mm, noitemsep, topsep=3pt plus 3pt]
\item If $b\neq d$ and the arc diagram is simple, then $b<c$.
\item If $c\neq a$ and the arc diagram is simple, then $c<b$.
\end{enumerate}
Indeed, if $c=a$ and $b\neq d$, then (e1') gives $b<c=a$, showing that $\desc(a\,b)=1$, proving~(e1).
Similarly, if $c\neq a$ and $b=d$, then (e2') gives $c<b=d$, showing that $\asc(c\,d)=1$, proving~(e2).
Lastly, if $c\neq a$ and $b\neq d$, then (e1') and (e2') together give~$b<c$ and~$c<b$, a contradiction, so this case cannot occur.

We begin proving~(e1'); see Figure~\ref{fig:regular}.
By Lemma~\ref{lem:minmax-middle}~(iv), $b$ is to the right of~$d$ in~$\rho$.
Let $e$ be the maximum entry between~$n$ and~$b$ in~$\rho$, and let $x$ be the entry directly left of~$b$.
It may happen that $e=d$, or $e=x$ or both, but this is irrelevant.
In fact, the entry~$d$ will not play any role in our further arguments.
We clearly have $e\geq x$.
Applying Lemma~\ref{lem:minmax-middle}~(iv), we obtain that $c,b>e$.
Suppose for the sake of contradiction that~$b>c$.
Combining the previous inequalities, we get $x\leq e<c<b<n$, i.e., we have the situation shown in Figure~\ref{fig:regular}.
From Lemma~\ref{lem:minmax-middle}~(vi), we obtain that the fence $g:=f(e,n,L_g)$ with $L_g:=L(\rho,e,n)$ is in~$F_R$.
We let $C$ denote the set of values that are strictly larger than~$e$ and not to the right of~$c$ in~$\rho$.
Similarly, we let $B$ denote the set of values that are strictly larger than~$e$ and not to the left of~$b$ in~$\rho$.
By these definitions and the maximal choice of~$e$, we get $L_g=C$ and $\ol{L_g}:=\left]e,n\right[\setminus L_g=B$. 

As $c\in L_g$ and $b\in\ol{L_g}$, the arc corresponding to the fence~$g$ that connects~$e$ with~$n$ has~$c$ on its left and~$b$ on its right, i.e., this arc is not simple.
It follows that this arc cannot be in the reduced arc diagram of~$R$.
This means there must be another fence~$g'=f(u,v,\left]u,v\right[\cap L_g)$, $e\leq u<v\leq n$, represented by a simple arc, that forces $g$ in the forcing order.
Clearly, as this arc is simple, we have that
\begin{equation}
\label{eq:ints}
\left]u,v\right[\cap L_g=\emptyset \quad \text{or} \quad \left]u,v\right[\cap L_g=\left]u,v\right[,
\end{equation}
i.e., $]u,v[$ is an interval of consecutive numbers from~$B$ or~$C$, respectively.

From Lemma~\ref{lem:minmax-middle}~(viii), we also know that the fences $h:=f(c,n,L_h)$ with $L_h:=L(\rho,c,n)$ and $h':=f(x,b,L_{h'})$ with $L_{h'}:=L(\rho,x,b)$ are \emph{not} in~$F_R$.
Observe that $L_h=\left]c,n\right[\cap L_g$ and $L_{h'}\cap\left]e,b\right[=L_g\cap\left]e,b\right[$, i.e., the arcs corresponding to the fences~$h$ and $h'$ pass to the left and right of the points in the intervals~$]c,n[$ or~$]e,b[$, respectively, exactly in the same way as the arc corresponding to the fence~$g$; see Figure~\ref{fig:regular}.
It follows that the interval~$[u,v]$ cannot be contained in the interval~$[e,b]$, as otherwise $g'$ would force~$h'$ in the forcing order, and we know that $h'\notin F_R$.
Similarly, it follows that the interval~$[u,v]$ cannot be contained in the interval~$[c,n]$, as otherwise $g'$ would force~$h$ in the forcing order, and we know that $h\notin F_R$.
We conclude that~$u<c$ and~$v>b$.
This however, would mean that $c$ is contained in the interval~$\left]u,v\right[\cap L_g$, but $b$ is not (as $b\notin L_g$), so none of the two conditions in~\eqref{eq:ints} can hold, which means that the fence~$g'$ cannot exist.
(In other words, the arc corresponding to~$g'$ would also have to be non-simple so that~$g'$ could force~$g$.)
We arrive at a contradiction to the assumption~$b>c$.
This completes the proof of~(e1').

The proof of~(e2') is analogous to the proof of~(e1'), and uses Lemma~\ref{lem:minmax-middle}~(iii) instead of~(iv), (v) instead of~(vi), and (vii) instead of~(viii).
We omit the details.
\end{proof}

\begin{wrapfigure}{r}{0.45\textwidth}
\flushright
\includegraphics{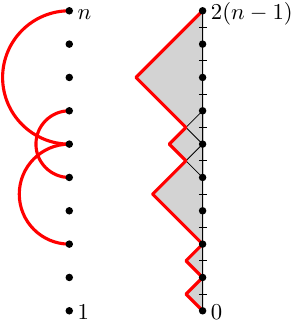}
\captionsetup{width=.92\linewidth}
\caption{Bijection between non-nesting arc diagrams on $n$ points and Dyck paths with $2(n-1)$ steps.}
\vspace{-4mm}
\label{fig:cat}
\end{wrapfigure}

From Theorem~\ref{thm:regular}, we obtain the following corollary.

\begin{corollary}
\label{cor:count-Rn}
The number of regular quotient graphs~$\cR_n$ is $|\cR_n| = C_{n-1}^2$, where $C_n$ is the $n$th Catalan number $C_n:=\frac{1}{n+1}\binom{2n}{n}$.
\end{corollary}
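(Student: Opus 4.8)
The plan is to read the count off Theorem~\ref{thm:regular} by enumerating antichains of the forcing order. First I would observe that, by Theorem~\ref{thm:regular}, $|\cR_n|$ equals the number of essential lattice congruences with a simple reduced arc diagram (distinct congruences give quotient graphs on distinct vertex sets, so counting graphs and counting congruences coincide here). By Theorem~\ref{thm:reading} a congruence is determined by its downset $F_R\seq F_n$, and a downset is determined by the antichain of its maximal elements --- which is exactly what the reduced arc diagram records. The fences $f(a,a+1,\emptyset)$ joining consecutive points are precisely the maximal elements of $(F_n,\prec)$, so a downset generated by simple arcs (which never join consecutive points) contains none of them and is automatically essential. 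Hence the whole problem reduces to counting the antichains of $(F_n,\prec)$ all of whose members are simple arcs.

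Next I would split such an antichain into its \emph{right} arcs, the fences $f(a,b,\emptyset)$ with $b-a\ge2$, and its \emph{left} arcs, the fences $f(a,b,\left]a,b\right[)$ with $b-a\ge2$. These two families do not interact under forcing: if $f(a,b,\emptyset)\prec f(c,d,M)$ then $M=\emptyset\cap\left]c,d\right[=\emptyset$, so $f(c,d,M)$ is again a right arc, and symmetrically a left arc is $\prec$-comparable only to left arcs. Therefore an antichain of simple arcs is exactly a pair (antichain of right arcs, antichain of left arcs) with no constraint between the two parts, giving $|\cR_n|=r_n\ell_n$. Since for both families $f(a,b,\cdot)\prec f(c,d,\cdot)$ holds if and only if $[c,d]\subsetneq[a,b]$ (the $L$-coordinate being forced), the map $[a,b]\mapsto[a,b]$ is a poset isomorphism between right and left arcs, so $r_n=\ell_n$ and $|\cR_n|=r_n^2$. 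It remains to show $r_n=C_{n-1}$.

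Finally I would identify a right-arc antichain with a set of intervals $[a,b]\seq[1,n]$ of length at least $2$, no two nested --- a non-nesting arc diagram on $n$ points --- and invoke the bijection of Figure~\ref{fig:cat} with Dyck paths of $2(n-1)$ steps. Concretely, scanning the points $1,\dots,n$ and recording over each of the $n-1$ gaps the number of arcs crossing it produces a nonnegative lattice path from $0$ to $0$ with steps $+1,-1,0$, a flat step carrying two options (either no arc both begins and ends at that point, or one of each); the non-nesting condition makes this a bijection, because it forces the $i$th arc to start and to end at the $i$th start-point and end-point respectively, and after the substitution $b\mapsto b-2$ the length-$1$ span constraint disappears, leaving two-coloured Motzkin paths of length $n-2$, which are counted by $C_{n-1}$. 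This yields $r_n=C_{n-1}$ and hence $|\cR_n|=C_{n-1}^2$; the case $n=2$ (no simple arcs at all) is consistent, as $C_1^2=1$. The hard part is this last bijection --- checking its invertibility and the nonnegativity bookkeeping, i.e.\ pinning down the correspondence of Figure~\ref{fig:cat}; everything before it is routine manipulation of the forcing order.
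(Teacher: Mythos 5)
Your proposal is essentially the paper's proof: it reduces via Theorem~\ref{thm:regular} to counting simple reduced arc diagrams, observes that these factor into an antichain of arcs passing on one side and an antichain of arcs passing on the other (with no forcing constraints between the two families), and identifies each factor with a Catalan family of size $C_{n-1}$. The extra bookkeeping you supply --- that the non-essential fences $f(a,a+1,\emptyset)$ are $\prec$-maximal and hence cannot appear in a downset whose generating antichain consists of simple arcs, and that the two one-sided families are order-isomorphic under $f(a,b,\emptyset)\leftrightarrow f(a,b,\left]a,b\right[)$ --- is implicit in the paper's one-paragraph argument, and your route to $C_{n-1}$ via two-colored Motzkin paths of length $n-2$ is a cosmetic variant of the paper's direct bijection with Dyck paths of length $2(n-1)$ (Figure~\ref{fig:cat}); note only that your ``left''/``right'' naming is flipped relative to the paper's convention, which takes $L=\emptyset$ to be a left arc.
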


\begin{proof}
By Theorem~\ref{thm:regular}, we need to count simple reduced arc diagrams on $n$ points.
Clearly, arcs passing to the left of the points are independent from arcs passing to the right of the points, so the result is proved by showing that diagrams where all arcs pass on the same side are counted by the Catalan numbers~$C_{n-1}$.
Note that the arcs are all simple, so no arc connects two consecutive points.
Circular arcs in such a diagram are non-nesting, by the assumption that the diagram is reduced, as nested arcs correspond to fences that are comparable in the forcing order.
A bijection between such non-nesting circular arc diagrams on $n$ points and Dyck paths with~$2(n-1)$ steps is illustrated in Figure~\ref{fig:cat}.
\end{proof}

\subsection{Maximum degree}

The next theorem establishes an exact formula for the maximum degrees of quotient graphs.

\begin{theorem}
\label{thm:max-deg}
For every lattice congruence $R\in\cC_n$, the maximum degree of the quotient graph~$Q_R$ is at most $2n-\lceil 2\sqrt{n}\rceil$.
Moreover, there is a lattice congruence with a vertex of this degree.
\end{theorem}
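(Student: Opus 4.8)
The plan is to prove the upper bound via Lemma~\ref{lem:neighbor}, which says that the degree of an equivalence class~$X$ in~$Q_R$ equals $\desc(\min(X))+\asc(\max(X))$. Write $\pi:=\min(X)$ and $\rho:=\max(X)$, and let $d:=\desc(\pi)$, $a:=\asc(\rho)$; since $\desc(\pi)+\asc(\pi)=n-1$ and likewise for~$\rho$, the degree is $d+a$ and we must show $d+a\leq 2n-\lceil 2\sqrt n\rceil$, i.e.\ $\asc(\pi)+\desc(\rho)\geq \lceil 2\sqrt n\rceil -1$. First I would record the structural constraint relating $\pi$ and $\rho$: since $\pi\leq\rho$ in the weak order and they lie in the same interval~$X$, every inversion of~$\pi$ is an inversion of~$\rho$; equivalently, every ascent of~$\rho$ (a pair $(a_i,a_{i+1})$ read off positions of~$\rho$, viewed as a pair of \emph{values}) is a non-inversion, hence a relation that is also realized as a non-inversion in~$\pi$. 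Concretely, if $(x,y)$ with $x<y$ is such that $y$ precedes $x$ in $\pi$ (an inversion of~$\pi$), then $y$ precedes $x$ in $\rho$ too, so $(x,y)$ can never be an adjacent ascent of~$\rho$; conversely the ascents of~$\pi$ are unrestricted but we want a \emph{lower} bound on their count together with $\desc(\rho)$.

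The key combinatorial step is an Erd\H os--Szekeres argument. Consider the sequence of \emph{positions} of the values $1,2,\dots,n$ in~$\rho$: define $p(v)$ to be the position of value~$v$ in~$\rho$. I claim $\desc(\rho)$ is at least the number of descents of the sequence $p(1),p(2),\dots,p(n)$ minus something controlled, and more usefully, that $\asc(\pi)+\desc(\rho)$ bounds from below the length of a monotone subsequence in a suitable auxiliary sequence. The cleanest route: look at $\pi$ and $\rho$ simultaneously. For each value $v\in[n]$ let $f(v)$ be its position in~$\pi$ and $g(v)$ its position in~$\rho$. Because $\pi\leq\rho$ in the weak order, if $f(u)<f(v)$ and $f$ is an ``ascent direction'' then... — rather than chase this, the robust move is: the longest increasing run together with information from both permutations yields, by Erd\H os--Szekeres applied to an appropriate length-$n$ sequence, a monotone block of length $\geq\lceil\sqrt n\rceil$, and such a block forces at least $\lceil\sqrt n\rceil-1$ ascents in~$\pi$ or descents in~$\rho$ (a monotone-increasing block of consecutive values at consecutive positions contributes ascents; monotone-decreasing contributes descents on the other side). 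Splitting into the two monotone directions and applying Erd\H os--Szekeres to get a block of size $\geq\lceil 2\sqrt n\rceil/2$ in \emph{each} of two complementary subsequences is what produces the total $\lceil 2\sqrt n\rceil-1$; I expect the precise bookkeeping here — matching the factor $2$ and the ceiling exactly so the bound is tight — to be the main obstacle.

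For the matching lower bound (existence of a congruence attaining the maximum), I would exhibit an explicit arc diagram. Take $n=m^2$ (or the general $n$ with $m=\lceil\sqrt n\rceil$) and build a lattice congruence whose extremal equivalence class~$X$ has $\min(X)$ consisting of $m$ increasing blocks each of length about $m$, arranged so that $\min(X)$ has exactly $m-1$ ascents (the block boundaries) and $\max(X)$ is obtained by reversing the block \emph{order} while keeping each block internally increasing, so that $\max(X)$ has exactly $m-1$ descents (again at the boundaries); one checks via Theorem~\ref{thm:reading} and the fence/forcing description that the set of cover edges inside~$X$ is closed and corresponds to a genuine downset~$F_R$ — essentially the fences needed are those permitting arbitrary permutation within and between the ``free'' parts, and these form a downset because larger fences in the forcing order are the shorter-span ones. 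Then Lemma~\ref{lem:neighbor} gives $\deg(X)=(n-1-(m-1))+(n-1-(m-1))=2n-2m = 2n-\lceil 2\sqrt n\rceil$ when $n=m^2$; for general~$n$ one pads a block by one element, verifies the ceiling arithmetic, and confirms no other class can do better by the upper bound just proved. The main risk in this half is verifying that the chosen family of fences really is a downset of the forcing order and that $X$ as described is actually an interval (equivalence class) of the resulting congruence; both reduce to the type~i/type~ii forcing rules of Figure~\ref{fig:forcing} and the explicit covering relations of~$\prec$ stated after Theorem~\ref{thm:reading}.
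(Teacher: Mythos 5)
Your plan has the right skeleton (Lemma~\ref{lem:neighbor} plus Erd\H{o}s--Szekeres for the upper bound, and a grid-based extremal construction for tightness), which matches the paper's approach. But there are two genuine gaps, one in each half, and you flag the first one yourself (``I expect the precise bookkeeping here\ldots to be the main obstacle''); that obstacle is exactly where the argument lives.

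\smallskip

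\emph{Upper bound.} You reduce to bounding $\asc(\pi)+\desc(\rho)$ from below, and you note $\inv(\pi)\subseteq\inv(\rho)$, but you never convert these facts into a concrete use of monotone subsequences. The missing mechanism is: (1) if $\rho$ has a decreasing subsequence of length~$r$, then each of its $r$ elements sits in a distinct maximal ascending run of $\rho$, so $\desc(\rho)\geq r-1$, i.e.\ $\asc(\rho)\leq n-r$; (2) if $\rho$ has an increasing subsequence of length~$s$, then because non-inversions of $\rho$ are non-inversions of $\pi$ (this is the direction you get from $\inv(\pi)\subseteq\inv(\rho)$), those same $s$ values form an increasing subsequence of $\pi$, giving $\desc(\pi)\leq n-s$. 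Adding these yields $\desc(\pi)+\asc(\rho)\leq 2n-(r+s)$ for \emph{any} choice of such subsequences, and then you need the additive Seidenberg form of Erd\H{o}s--Szekeres (Lemma~\ref{lem:ES+} in the paper), $r+s\geq\lceil 2\sqrt{n}\rceil$, applied to $\rho$ alone with $r,s$ the lengths of the longest decreasing and increasing subsequences. Your proposed route (``a block of size $\geq\lceil 2\sqrt{n}\rceil/2$ in each of two complementary subsequences'') is not the right invocation: the standard Erd\H{o}s--Szekeres only guarantees that \emph{one} of $r,s$ is $\geq\lceil\sqrt{n}\rceil$, which gives $r+s\geq\lceil\sqrt{n}\rceil+1$, strictly weaker; you need the AM--GM refinement $r+s\geq 2\sqrt{rs}\geq 2\sqrt{n}$. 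Also, your remark about ``consecutive values at consecutive positions'' conflates monotone subsequences with monotone runs; the subsequences are not consecutive, and what matters is the run-decomposition argument in (1).

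\smallskip

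\emph{Lower bound.} Your construction as stated is internally inconsistent: if $\min(X)$ consists of $m$ increasing blocks of length $\approx m$, those blocks alone contribute about $m(m-1)\approx n-m$ ascents, not $m-1$. What you want (and what the paper does) is the opposite orientation: read the $s=\lceil\sqrt{n}\rceil$ columns of a filled $r\times s$ grid top-to-bottom to get $\pi$ with $s$ \emph{decreasing} blocks and hence $\desc(\pi)=n-s$, and read the rows top-to-bottom, left-to-right to get $\rho$ with $r$ increasing blocks and hence $\asc(\rho)=n-r$, where $r=\lceil 2\sqrt{n}\rceil-s$. The fences to include are those $f(a,b,L)$ coming from diagonal pairs $a,b$ (with $b$ one row up and one column right of $a$), with $L$ the entries to the left of $b$ in its row, and then take the downset; one then checks $\pi=\min(X)$, $\rho=\max(X)$ by observing that all descents of $\pi$ have value-gap $s$ and all ascents of $\rho$ have value-gap $1$, so none of the corresponding fences lie in the downset. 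Your plan gestures at Theorem~\ref{thm:reading} but leaves exactly this verification (and the correct orientation of the blocks) open.
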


For proving Theorem~\ref{thm:max-deg}, we need the following variant of the famous Erd\H{o}s-Szekeres theorem.

\begin{lemma}
\label{lem:ES+}
Consider a sequence of distinct integers of length~$n$, and let $r$ and $s$ be the length of the longest monotonically increasing and decreasing subsequences, respectively.
Then we have $r+s\geq \lceil 2\sqrt{n}\rceil$.
\end{lemma}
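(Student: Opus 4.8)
The plan is to run the classical pigeonhole argument behind the Erd\H{o}s--Szekeres theorem and then convert the resulting product bound into the desired sum bound via the arithmetic--geometric mean inequality. Write the sequence as $a_1,\ldots,a_n$. For each index $i\in[n]$, let $x_i$ be the length of a longest monotonically increasing subsequence of $a_1,\ldots,a_n$ that ends with the entry~$a_i$, and let $y_i$ be the length of a longest monotonically decreasing subsequence that ends with~$a_i$. By definition $1\leq x_i\leq r$ and $1\leq y_i\leq s$ for every~$i$.

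The key step is to show that the map $i\mapsto (x_i,y_i)$ is injective. Consider two indices $i<j$. Since the entries are distinct, either $a_i<a_j$ or $a_i>a_j$. In the first case any increasing subsequence ending at~$a_i$ can be extended by~$a_j$, so $x_j\geq x_i+1$; in the second case any decreasing subsequence ending at~$a_i$ can be extended by~$a_j$, so $y_j\geq y_i+1$. In either case $(x_i,y_i)\neq(x_j,y_j)$, proving injectivity. Consequently the $n$ distinct pairs $(x_i,y_i)$ all lie in the grid $[1,r]\times[1,s]$, which has $rs$ points, so $rs\geq n$.

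Finally, the arithmetic--geometric mean inequality gives $r+s\geq 2\sqrt{rs}\geq 2\sqrt{n}$, and since $r+s$ is an integer this yields $r+s\geq\lceil 2\sqrt{n}\rceil$, as claimed. There is essentially no real obstacle in this argument; the only points that warrant a moment of care are that the injectivity case analysis treats the two comparison cases symmetrically (one feeding $x$, the other feeding $y$), and that the final rounding up is legitimate precisely because the left-hand side $r+s$ is an integer while $2\sqrt{n}$ need not be.
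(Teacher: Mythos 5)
Your proof is correct and follows essentially the same approach as the paper's, namely Seidenberg's injective-pair argument combined with the arithmetic--geometric mean inequality and a final integrality observation; the only difference is cosmetic (swapped variable names).
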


The Erd\H{o}s-Szekeres theorem is usually stated in the slightly weaker form that one of~$r$ \emph{or} $s$ is at least $\lceil\sqrt{n}\rceil$.
The proof of our lemma follows Seidenberg's proof~\cite{MR106189} (see also~\cite{MR1380525}).

\begin{proof}
Let $x_1,\ldots,x_n$ be the sequence we consider.
For $i=1,\ldots,n$, let $a_i$ and $b_i$ be the lengths of the longest increasing or decreasing subsequences ending with $x_i$.
Note that for $1\leq i<j\leq n$ we either have $x_i<x_j$, and then we know that $a_i<a_j$, or we have $x_i>x_j$, and then we know that $b_i<b_j$.
Consequently, all pairs $(a_i,b_i)$ must be distinct, and we have $1\leq a_i\leq r$ and $1\leq b_i\leq s$, implying that $n\leq rs$.
From the arithmetic/geometric mean inequality we obtain $r+s=2(r+s)/2\geq 2\sqrt{rs}\geq 2\sqrt{n}$.
As~$r$ and~$s$ must be integers, this implies the lower bound $r+s\geq \lceil 2\sqrt{n}\rceil$.
\end{proof}

\begin{proof}[Proof of Theorem~\ref{thm:max-deg}]
Consider any permutation~$\rho$ in the weak order on~$S_n$, and consider another permutation $\pi<\rho$ in its downset.
Consider the longest monotonically decreasing subsequence of~$\rho$, and let $r$ denote its length.
Note that $\rho$ has at most $n-1-(r-1)=n-r$ ascents, regardless of the values between the elements of the subsequence.
Similarly, consider the longest monotonically increasing subsequence of~$\rho$, and let $s$ denote its length.
Observe that the elements of this subsequence appear in the same relative order in~$\pi$, so $\pi$ has at most $n-1-(s-1)=n-s$ descents, regardless of the values between the elements of the subsequence.
Overall, we have $\desc(\pi)+\asc(\rho)\leq 2n-(r+s)$.
Applying Lemma~\ref{lem:neighbor} and Lemma~\ref{lem:ES+} completes the proof of the upper bound in the theorem.

\begin{figure}
\includegraphics{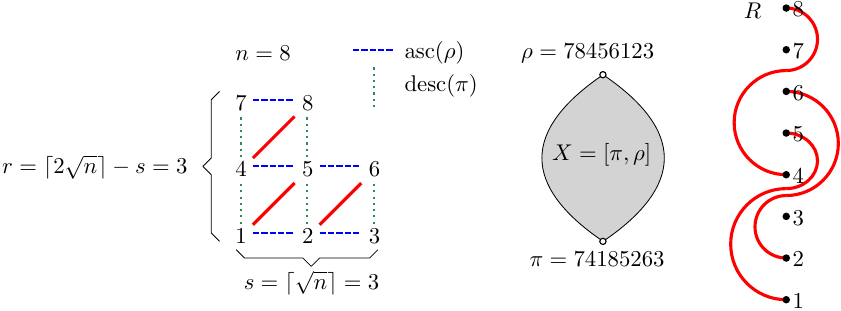}
\caption{Illustration of the proof of Theorem~\ref{thm:max-deg}.
The right part of the figure shows the reduced arc diagram of the lattice congruence~$R$.}
\label{fig:erdos}
\end{figure}

It remains to construct a lattice congruence~$R\in\cC_n$ that has an equivalence class~$X$ with $\desc(\min(X))+\asc(\max(X))=2n-\lceil 2\sqrt{n}\rceil$.
This construction is illustrated in Figure~\ref{fig:erdos}.
Fill the numbers $1,2,\ldots,n$ into a table with $s:=\lceil\sqrt{n}\rceil$ columns, row by row from bottom to top, and from left to right in each row.
The topmost row may not be filled completely.
It can be checked that the number of rows~$r$ of the table is $r=\lceil 2\sqrt{n}\rceil-s$.
Now consider the permutation~$\pi$ obtained by reading the columns of the table from left to right, and from top to bottom in each column.
It satisfies $\desc(\pi)=(n-1)-(s-1)=n-s$.
Also consider the permutation~$\rho$ obtained by reading the rows of the table from top to bottom, and from left to right in each row.
It satisfies $\asc(\rho)=(n-1)-(r-1)=n-r$.
We now construct a lattice congruence~$R$ that has an equivalence class~$X$ with $\min(X)=\pi$ and $\max(X)=\rho$.
From Lemma~\ref{lem:neighbor}, we then obtain that the degree of~$X$ in the quotient graph~$Q_R$ is $\desc(\pi)+\asc(\rho)=2n-(r+s)=2n-\lceil 2\sqrt{n}\rceil$.

We construct~$R$ by specifying a set of fences in the forcing order, and then take the downset of all those fences as~$F_R$.
The fences are constructed as follows:
For each pair of numbers~$a$ and~$b$ in our table where~$b$ is one row above and one column to the right of~$a$, we let~$L$ be the set of all numbers left of~$b$ in the same row as~$b$, and we add the fence~$f(a,b,L)$.
Now $F_R$ is obtained by taking the downset of all those fences in the forcing order.
Observe that as our initial fences~$f(a,b,L)$ all satisfy $b-a=s+1$, all fences~$f(a,b,L)$ in~$F_R$ satisfy $b-a\geq s+1$.
Using the definition of fences and Theorem~\ref{thm:reading}, it can be verified directly that~$\rho$ and~$\pi$ belong to the same equivalence class.
To see that $\pi$ is the minimal element of its equivalence class, note the all descents of~$\pi$ have difference~$s$, so none of these fences is in~$R$, meaning that none of the edges leading to a down-neighbor of~$\pi$ is a bar.
Similarly, to see that $\rho$ is the maximal element of its equivalence class, note that all ascents of~$\rho$ have difference~1, so none of these fences is in~$R$, meaning that none of the edges leading to an up-neighbor of~$\rho$ is a bar.
\end{proof}

\subsection{Vertex-transitive quotient graphs}
\label{sec:count-Vn}

It turns out that all vertex-transitive quotient graphs~$\cV_n$ and~$\cV_n'$ can be characterized and counted precisely via weighted integer compositions and partitions, respectively; see Theorems~\ref{thm:comp} and~\ref{thm:part} and Corollaries~\ref{cor:count-Vn} and~\ref{cor:count-Vn'} below.
As $\cV_n\seq\cR_n$, by Theorem~\ref{thm:regular} we only need to consider simple reduced arc diagrams as candidates for vertex-transitive quotient graphs.
However, as we shall see, we will have to impose further restrictions on the diagram.
Specifically, we refer to an arc corresponding to a fence~$f(a,b,L)$ with $L=\emptyset$ as a \emph{left arc}, and with~$L=\left]a,b\right[$ as a \emph{right arc}.
Also, we say that an arc connecting two points~$s-1$ and~$s+1$, $s\in[2,n-1]$, is \emph{short}.
Moreover, we say that the reduced arc diagram is \emph{empty}, if it contains no arcs.
We will see that all reduced arc diagrams that yield vertex-transitive graphs are suitable concatenations of smaller diagrams that are either empty or contain only short left or right arcs.

The \emph{Cartesian product} of two graphs $G=(V,E)$ and $H=(W,F)$, denoted $G\cprod H$, is the graph with vertex set $V\times W$ and edges connecting $(v,w)$ with $(v',w')$ whenever $v=v'$ and $(w,w')$ is an edge in~$F$, or $w=w'$ and $(v,v')$ is an edge in~$E$.
We write $G\simeq H$ if $G$ and $H$ are isomorphic graphs.
We say that a graph is \emph{prime} if it is not a Cartesian product of two graphs with fewer vertices each.
The following lemma captures a few simple observations that we will need later.

\begin{lemma}[{\cite[page~29+Corollary~4.16+Theorem~4.19]{MR1788124}}]
\label{lem:cart-sim}
The following statements hold for arbitrary connected graphs $G,G',H,H'$:
\begin{enumerate}[label=(\roman*),noitemsep,parsep=0ex,leftmargin=4.5ex]
\item We have $G\cprod H\simeq H\cprod G$.
\item If $G\cprod H\simeq G'\cprod H'$ and both $H$ and~$H'$ are prime, then we have $G\simeq G'$ and $H\simeq H'$, or $G\simeq H'$ and $H\simeq G'$.
\item $G$ and $H$ are vertex-transitive, if and only if $G\cprod H$ is vertex-transitive.
\end{enumerate}
\end{lemma}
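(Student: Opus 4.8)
The plan is to treat the three parts separately, since (i) and the ``only if'' half of (iii) are elementary while (ii) and the ``if'' half of (iii) are quotations of classical structure theory for the Cartesian product of connected graphs. For (i) I would simply check that the coordinate swap $(v,w)\mapsto(w,v)$ is the desired isomorphism: by definition $(v,w)$ and $(v',w')$ are adjacent in $G\cprod H$ exactly when $v=v'$ and $ww'\in F$, or $w=w'$ and $vv'\in E$, and this condition is symmetric in the two coordinates, so the swap preserves adjacency and non-adjacency in both directions.

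For the ``only if'' direction of (iii), the key point is that whenever $\varphi$ is an automorphism of $G$ and $\psi$ an automorphism of $H$, the map $(v,w)\mapsto(\varphi(v),\psi(w))$ is an automorphism of $G\cprod H$, since it sends each of the two edge types of the product to itself. Hence, given vertex-transitive $G$ and $H$ and any two vertices $(v,w)$ and $(v',w')$ of the product, picking $\varphi$ with $\varphi(v)=v'$ and $\psi$ with $\psi(w)=w'$ yields an automorphism carrying the first vertex to the second.

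The remaining assertions --- part (ii) and the ``if'' direction of (iii) --- I would derive from two classical facts about the Cartesian product restricted to connected graphs: the \emph{unique prime factorization theorem} (every connected graph is a product of prime graphs, uniquely up to the order and isomorphism types of the factors), and the \emph{structure of the automorphism group} of such a product (every automorphism is a permutation of pairwise isomorphic prime factors followed by an automorphism applied in each coordinate). For (ii) I would write $G$ and $G'$ as products of primes, adjoin the single prime factor $H$, respectively $H'$, and compare the two resulting prime factorizations of the graph $G\cprod H\simeq G'\cprod H'$; by uniqueness the two multisets of prime factors coincide, and a short bookkeeping step then yields the stated alternative for the pairs $G,H$ and $G',H'$. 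For the ``if'' direction of (iii) I would fix a prime factorization of $G\cprod H$ and use the automorphism structure together with vertex-transitivity of $G\cprod H$ to see that every individual prime factor is vertex-transitive; since the prime factors of $G$ and of $H$ together make up the prime factorization of $G\cprod H$, each prime factor of $G$ and of $H$ is vertex-transitive, and the already-proved ``only if'' direction of (iii) then shows that $G$ and $H$ are vertex-transitive.

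The main obstacle is not the bookkeeping but having these two structure results available: the unique prime factorization theorem (Sabidussi, Vizing) and the description of the automorphism group are substantial theorems, and the step ``a prime factor of a vertex-transitive connected graph is itself vertex-transitive'' is in particular the crux of the ``if'' direction of (iii). Since these results are standard and are quoted here from the cited monograph, in the write-up I would invoke them directly rather than reprove them, so that the proof reduces to the elementary arguments above together with precise pointers to the literature.
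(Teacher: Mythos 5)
Since the paper does not actually prove this lemma --- it only cites Imrich and Klav\v{z}ar's monograph --- there is no internal proof to compare against, and your plan of verifying (i) and the ``only if'' direction of (iii) directly while quoting the unique prime factorization theorem and the automorphism structure theorem for (ii) and the ``if'' direction of (iii) is exactly the right reading of the citation. The coordinate-swap isomorphism for (i) and the product-of-automorphisms argument for the ``only if'' half of (iii) are both correct and complete as sketched.

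The one genuine gap is the ``short bookkeeping step'' you invoke for (ii): equality of the multisets of prime factors does not yield the stated dichotomy without additional hypotheses. Concretely, take $G=K_2\cprod P_3$, $H=P_3$, $G'=P_3\cprod P_3$, $H'=K_2$, with $P_3$ the path on three vertices. Then $G\cprod H\simeq G'\cprod H'\simeq K_2\cprod P_3\cprod P_3$ and both $H$ and $H'$ are prime, yet $G\not\simeq G'$ and $G\not\simeq H'$ (already the vertex counts $6$, $9$, $2$ are pairwise distinct), so neither alternative in (ii) holds. What unique prime factorization really gives --- and what the paper actually uses in the proof of Theorem~\ref{thm:part} --- is that a finite Cartesian product of prime connected graphs is determined up to isomorphism by the multiset of its prime factors. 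To obtain (ii) in the form stated one needs some further assumption, for instance that $G$ and $G'$ are also prime, or that $G\cprod H$ has exactly two prime factors. In your write-up I would therefore either record (ii) in the multiset form, which is what the application requires, or add the missing hypothesis and keep the dichotomy; as it stands the ``bookkeeping'' would not close.
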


Consider a lattice congruence $R\in\cC_n^*$ such that $F_R$ contains two fences~$f(s-1,s+1,\emptyset)$ and~$f(s-1,s+1,\{s\})$ for some~$s\in[2,n-1]$.
Note that any essential fence of the form~$f(a,b,L)$, with $a\in[1,s]$, $b\in[s,n]$ and~$L\seq\left]a,b\right[$ is in the downset of one of the these two fences in the forcing order.
In other words, the reduced arc diagram of~$R$ contains no arc from a point in~$[1,s]$ to a point in~$[s,n]$, except the short left arc and short right arc that connect the points~$s-1$ and~$s+1$.
Moreover, by Lemma~\ref{lem:dim}, the quotient graph~$Q_R$ is obtained as the Cartesian product of the quotient graphs of the two lattice congruences~$A\in\cC_s$ and~$B\in\cC_{n+1-s}$ whose reduced arc diagrams contain exactly the arcs of the reduced arc diagram of~$R$ restricted to the intervals~$[1,s]$ and~$[s,n]$, respectively.
We say that in the reduced arc diagram of~$R$, the short left arc and right arc that connect the points~$s-1$ and~$s+1$ form a \emph{loop centered at $s$}, and we say that the reduced arc diagram of~$R$ is the \emph{product} of the reduced arc diagrams of~$A$ and~$B$.
In this way, the product of two reduced arc diagrams is obtained by gluing together their endpoints, and placing a loop centered at the gluing point.

With slight abuse of notation, we use $S_n$ to also denote the cover graph of the weak order on~$S_n$.
The 5-cycle~$C_5$ is obtained as the quotient graph for the lattice congruence~$R\in\cC_3^*$ given either by $F_R=\{f(1,3,\emptyset)\}$, or by $F_R=\{f(1,3,\{2\})\}$.
Clearly, both~$S_n$ and~$C_5$ are vertex-transitive, and the reduced arc diagram of the former has $n$ points and is empty, and the reduced arc diagram of the latter has 3 points and either one short left arc or one short right arc that connects the first with the third point.
In the following we argue that all vertex-transitive quotient graphs~$\cV_n$ have arc diagrams that are products of these two basic diagrams.
We first rule out any other arc diagrams as candidates for giving a vertex-transitive quotient graph.

Recall that the quotient graph~$Q_R$ has as vertices all equivalence classes of~$R$, and an edge between any two classes~$X$ and~$Y$ that contain a pair of permutations differing in an adjacent transposition.


\begin{lemma}
\label{lem:non-empty}
Let $n \geq 4$, and let $R \in \cC_n^*$ be a lattice congruence whose reduced arc diagram is simple and has no loops.
If the reduced arc diagram is not empty, then $Q_R$ is not vertex-transitive.
\end{lemma}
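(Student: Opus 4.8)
The plan is to produce a graph‑isomorphism invariant that is not constant on the vertices of $Q_R$, which (as $Q_R$ is connected) contradicts vertex‑transitivity. The invariant I would use is $\#_5(u)$, the number of $5$-cycles through a vertex $u$: I will compute $\#_5(\{\ide_n\})$ exactly and then exhibit a second vertex with a different value. Throughout I use that $Q_R$ is $(n-1)$-regular (Theorem~\ref{thm:regular}, since the reduced arc diagram is simple).

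Since $R$ is essential, $\{\ide_n\}$ is a singleton class, and its $n-1$ neighbours in $Q_R$ are the classes $X(s_i\ide_n)$, $i\in[n-1]$, where $s_i$ transposes positions $i$ and $i+1$. Each $X(s_i\ide_n)$ has minimum $s_i\ide_n$ — the edge down to $\ide_n$ is no bar because $R$ is essential — and since the inversion sets $\{(i+1,i)\}$ are pairwise incomparable, Lemmas~\ref{lem:interval} and~\ref{lem:subposet} show these neighbours are pairwise non‑adjacent; in particular $\{\ide_n\}$ lies on no triangle. For the $5$-cycles: the permutahedron has exactly $n-2$ hexagonal faces through $\ide_n$, one per value‑block $\{i,i+1,i+2\}$, $i\in[n-2]$, and, exactly as in the analysis underlying Lemma~\ref{lem:non-simple}, such a face stays a hexagon in $Q_R$ unless precisely one of the fences $f(i,i+2,\emptyset)$, $f(i,i+2,\{i+1\})$ lies in $F_R$, in which case it becomes a chordless pentagon; the no‑loop hypothesis forbids both lying in $F_R$. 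These faces therefore yield exactly $m$ pentagons through $\{\ide_n\}$, where $m$ is the number of short fences contained in $F_R$. Finally, there are no further $5$-cycles through $\{\ide_n\}$: a cover edge of $S_n/R$ lifts to a cover edge of $S_n$, so a $5$-cycle through $\{\ide_n\}$ — which is the global minimum of the lattice — would produce a class $Z>\{\ide_n\}$ joined to $\{\ide_n\}$ by saturated quotient‑chains of lengths $2$ and $3$; a chain of length $k$ from $\{\ide_n\}$ to $Z$ forces $\inv(\min Z)\ge k$, while the length‑$2$ chain, together with the fact that transposing two consecutive values is never a bar (that fence has span $1$), pins $\inv(\min Z)$ down to $2$ — a contradiction. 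Hence $\#_5(\{\ide_n\})=m$.

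Now for the second vertex. If $m=0$, then $\{\ide_n\}$ lies on no $5$-cycle; but $Q_R$ is not bipartite (contracting any bar of the bipartite cover graph of $S_n$ creates an odd cycle), and one can in fact locate a hexagonal face of $S_n$ containing a unique bar — e.g.\ starting from a maximal fence $f(a,b,\emptyset)$ of $F_R$ and the value‑block $\{a,c,b\}$ for a middle value $c$ with $a<c<b$ — which collapses to a pentagon, so some vertex of $Q_R$ lies on a $5$-cycle and $\#_5$ is non‑constant. If $m\ge1$, then, after applying the reversal symmetry $\pi\mapsto\pi^{\mathrm{rev}}$ (which sends $R$ to an essential, simple, loop‑free congruence with the same quotient graph), we may assume $F_R$ contains a left short fence $f(s-1,s+1,\emptyset)$; the forcing order then also places $f(a,b,\emptyset)$ into $F_R$ for every $[a,b]\supseteq[s-1,s+1]$, as well as non‑simple fences such as $f(s-2,s+1,\{s-1\})$. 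I would use these to exhibit a two‑element equivalence class $X$ — realizing a ``maximal short bar'' — with $\#_5(X)\ne m$; in the base case $n=4$ one may take $X=X(s_s\ide_4)$ and check directly that $X$ lies on two pentagons (one from the short fence at $s$, one from the forced fence $f(s-2,s+1,\{s-1\})$ viewed from its other endpoint) whereas $\{\ide_4\}$ lies on one. In all cases $\#_5$ then takes two distinct values on $Q_R$, so $Q_R$ is not vertex‑transitive.

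The step I expect to be the main obstacle is the construction of the witness class $X$ when $m\ge1$: the fences forced into $F_R$ make the bars \emph{cascade}, so a careless choice of representative permutation can have a large and unwieldy equivalence class. One must therefore choose the underlying permutation carefully — for instance so that every descent is a non‑bar and the block $\{s-1,s,s+1\}$ is positioned so that only the single $(s-1,s+1)$-transposition is a bar — and then verify, via Theorem~\ref{thm:reading} and Lemma~\ref{lem:neighbor}, that $X$ is a genuine $2$-element class with the stated number of $5$-cycles. The remaining ingredients — the local picture at $\{\ide_n\}$, the chain‑length argument ruling out extra $5$-cycles there, and the non‑bipartiteness of $Q_R$ — are routine.
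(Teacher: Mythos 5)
Your plan — find a local, graph-theoretic invariant that is non-constant on the vertices — is the same high-level strategy as the paper's, and your computation of the $4$-cycles through $\{\ide_n\}$ and of the collapsed hexagonal faces is on target. The problem is the step purporting to show $\#_5(\{\ide_n\})=m$. The chain-length argument you give would, if valid, rule out \emph{all} $5$-cycles through $\{\ide_n\}$, including the $m$ collapsed hexagons you have just exhibited, which is already a warning sign. Concretely, the ``pinning'' claim is false: if $F_R$ contains the short fence $f(i,i+2,\{i+1\})$, the hexagonal $2$-face on the value block $\{i,i+1,i+2\}$ collapses to a pentagon whose top is the singleton class $Z=\{s_is_{i+1}s_i\ide_n\}$, for which $\inv(\min Z)=3$; yet $Z$ is joined to $\{\ide_n\}$ by the length-$2$ chain through $X(s_i\ide_n)$. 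The error is in assuming a cover $X\lessdot Y$ in the quotient forces $\min X\lessdot\min Y$ in the weak order; Lemma~\ref{lem:subposet} gives only $\min X<\min Y$. Essentiality of $R$ does not repair this: $X(s_i\ide_n)$ need not be a singleton (here it contains $s_{i+1}s_i\ide_n$), and even when it is, $\min Z$ need not cover it. Consequently $\#_5(\{\ide_n\})=m$ is not established — and even if it were, you would still have to rule out $5$-cycles through $\{\ide_n\}$ that are not boundaries of $2$-faces, which the chain argument does not do.

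The second gap is the one you acknowledge: the witness class with $\#_5\neq m$ is only sketched, and for $m\geq 1$ deferred to a base case $n=4$. That construction is precisely where the paper's proof does its real work: after the count at $\{\ide_n\}$, it selects a class from the permutations adjacent to a chosen maximal arc of the reduced diagram and runs a four-case analysis (Cases~1--4, Tables~\ref{tab:case1}--\ref{tab:case4b}), comparing for the two vertices how many pairs of incident edges bound a $4$-gon versus a $\{5,6\}$-gon. Lumping $5$- and $6$-faces together, rather than isolating $5$-cycles, is a deliberate simplification that sidesteps exactly what your route stumbles on (separating face $5$-cycles from non-face $5$-cycles and from $6$-faces). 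So the invariant you choose is genuinely different in spirit, but the supporting lemma for $\#_5(\{\ide_n\})$ is incorrect as stated, and the case analysis for the witness vertex — the hard part — is absent; the proof is not complete.
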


\begin{proof}
Given a permutation~$\pi\in S_n$ and four distinct entries $a,b,c,d\in[n]$ with $a<b$ and $c<d$ such that~$\pi$ is incident with an $(a,b)$-edge and a $(c,d)$-edge in the cover graph of the weak order on~$S_n$, then $\pi$ forms a 4-cycle in this graph, given by all four permutations obtained from~$\pi$ by transposing $a$ with~$b$, and $c$ with~$d$ in all possible ways.
We denote this 4-cycle by~$C(\pi,(a,b),(c,d))$.
Similarly, given $\pi$ and three distinct entries $a,b,c\in[n]$ with $a<b<c$ such that~$\pi$ is incident with an $(x,y)$-edge and an $(x,z)$-edge, where $\{x,y,z\}=\{a,b,c\}$, then $\pi$ forms a 6-cycle in the cover graph, given by all six permutations obtained from~$\pi$ by permutating $a,b,c$ in all possible ways.
We denote this 6-cycle by~$C(\pi,(a,b,c))$.
Let $L$ denote the set of all entries to the left of all of~$a,b,c$ in~$\pi$.
The 6-cycle $C(\pi,(a,b,c))$ has two edges belonging to the fence~$f(a,b,L\cap\left]a,b\right[)$ and two edges belonging to the fence~$f(b,c,L\cap\left]b,c\right[)$, and we abbreviate these edge sets by~$E_{12}$ and~$E_{23}$, respectively.
It also has exactly one edge belonging to the fence $f(a,c,L\cap\left]a,c\right[)$ and one edge belonging to the fence $f(a,c,L\cap\left]a,c\right[\cup\{b\})$, and we abbreviate these edge sets by~$E_{13\emptyset}$ and $E_{132}$, respectively.
These edge sets of the 4-cycles and 6-cycles mentioned before capture how the type~i and type~ii forcing constraints (recall Figure~\ref{fig:forcing}) act on those cycles.
In the following arguments, we have to distinguish carefully between cycles in the weak order on~$S_n$, and cycles in the quotient graph~$Q_R$.
In particular, a 6-cycle in the weak order may result in a 6-, 5-, or 4-cycle in~$Q_R$, or collapse to a single edge or vertex in~$Q_R$, depending on which of the four aforementioned edges are bars.

By Theorem~\ref{thm:regular}, all vertices in the quotient graph~$Q_R$ have degree~$n-1$.
The strategy of our proof is to consider two particular vertices in the graph, and each of the $\binom{n-1}{2}$ pairs of edges incident with each of those vertices.
Every such pair of edges defines a 4-, 5-, or 6-cycle in~$Q_R$ containing these two edges.
In the corresponding quotientope, these cycles bound the 2-dimensional faces incident to that vertex.
We will show that the number of 4-cycles and $\{5,6\}$-cycles incident to the two vertices is different, implying that the graph is not vertex-transitive.
One of the two vertices we consider is the equivalence class that contains only the identity permutation~$\ide_n$.
The $n-1$ edges incident with it are $(i,i+1)$-edges for $i=1,\ldots,n-1$ (recall Lemma~\ref{lem:neighbor} and that $R$ is assumed to be essential).
There are $n-2$ pairs of an $(i,i+1)$-edge and an $(i+1,i+2)$-edge, and every such pair of edges defines either a 5-cycle or a 6-cycle in~$Q_R$:
Indeed, the edges in~$E_{12}$ and~$E_{23}$ of the 6-cycle $C(\ide_n,(i,i+1,i+2))$ are not bars, as $R$ is essential.
Moreover, by the assumption that the diagram of~$R$ contains no loops, at most one of the fences $f(i,i+2,\emptyset)$ or $f(i,i+2,\{i+1\})$ is in~$F_R$, so at most one of the edges in~$E_{13\emptyset}$ or~$E_{132}$ of the 6-cycle is a bar.
It follows that the corresponding cycle in~$Q_R$ is a 5-cycle or a 6-cycle.
The remaining $\binom{n-1}{2}-(n-2)=\binom{n-2}{2}$ pairs of an $(i,i+1)$-edge and a $(j,j+1)$-edge, $j>i+1$, incident with $\ide_n$ all form a 4-cycle in~$Q_R$:
Indeed, none of the edges of the 4-cycle $C(\ide_n,(i,i+1),(j,j+1))$ are bars, as $R$ is essential.

In the remainder of this proof we identify any arc in the diagram of~$R$ with the fence in the downset~$F_R$ of the forcing order that it represents.
An arc being in the diagram means that the corresponding fence is contracted, i.e., its edges are bars, meaning that the permutations that are the endpoints of such a bar are in the same equivalence class.
Conversely, an arc not being in the diagram means that the corresponding fence is not contracted, i.e., its edges are not bars.

As the reduced arc diagram of~$R$ is not empty, we consider the arc~$f(a,b,L)$ incident to the highest point.
As all arcs are simple, we may assume by symmetry that it is a left arc, i.e., $L=\emptyset$, and if there is also a right arc incident to this point, then we may assume that the left arc is at least as long as the right arc.

\begin{enumerate}[label=(\roman*), leftmargin=8mm, noitemsep, topsep=3pt plus 3pt]
\item The left arc $f(a,b,\emptyset)$ is in the diagram.
\item The endpoints of all arcs are below or at point~$b$.
\item If there is a right arc ending at point~$b$, then its starting point~$a'$ satisfies $a'\geq a$.
\item No two arcs in the diagram are nested, by the assumption that the diagram is reduced, as nested arcs correspond to fences that are comparable in the forcing order.
In particular, no left arc $f(a',b,\emptyset)$, $a'>a$, is in the diagram.
\item All arcs are simple, in particular, no arc connects two consecutive points, as $R$ is assumed to be essential.
\end{enumerate}

We define the sequences $A:=(1,\ldots,a-1)$, $B:=(a+2,\ldots,b-1)$, and $C:=(b+1,\ldots,n)$.
The various cases considered in the following proof are illustrated in Figure~\ref{fig:non-empty}.

\begin{figure}
\includegraphics{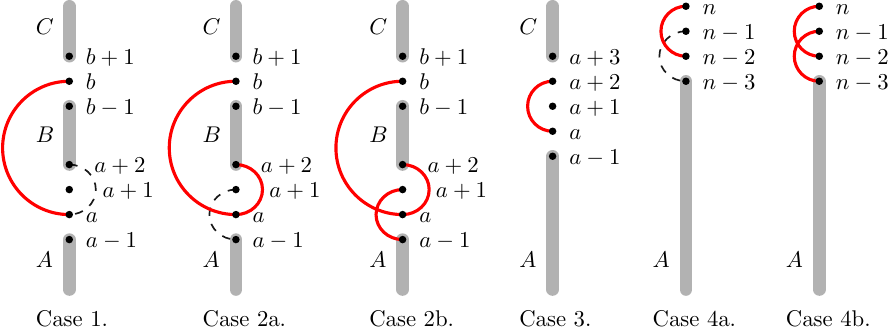}
\caption{Case distinctions in the proof of Lemma~\ref{lem:non-empty}.
Arcs in the diagram are drawn with solid lines, arcs that are \emph{not} in the diagram are indicated by dashed lines.
}
\label{fig:non-empty}
\end{figure}

\textbf{Case 1:} $b-a \geq 3$ and the short right arc $f(a,a+2,\{a+1\})$ is not in the diagram of~$R$.
Consider the equivalence class $X_1$ containing the permutation $\pi_1:=A\,a\,b\,(a+1)\,B\,C$.
It has exactly one descent, namely $(b,a+1)$, and as the left arc $f(a+1,b,\emptyset)$ is not in the diagram by~(iv), we obtain that $\pi_1$ is the minimum of~$X_1$ (recall Lemma~\ref{lem:neighbor}).
Consider the permutation $\rho_1:=b\,A\,a\,(a+1)\,B\,C$, obtained from~$\pi_1$ by transposing the substring~$A\,a$ with~$b$, and so by~(i) and Lemma~\ref{lem:swap-blocks}, $\rho_1$ is also contained in~$X_1$.
Moreover, all $n-2$ ascents in~$\rho_1$ are $(i,i+1)$, for $i\in[n-1]\setminus\{b-1,b\}$, and $(b-1,b+1)$ (if $b<n$), and so by~(ii) and~(v), $\rho_1$ is the maximum of~$X_1$ (recall Lemma~\ref{lem:neighbor}).

\begin{table}[h!]
\caption{Summary of arguments in case~1 in the proof of Lemma~\ref{lem:non-empty}.
Edges that are not bars are marked with (i)--(v), referencing the argument for why that arc is not in the diagram of~$R$.
The edge marked with (*) is not a bar, and the argument is given after the table.
Edges marked with ? are irrelevant for our arguments.
The 6-cycles marked with~[1] are only valid if $b<n$, and those marked with~[2] are only valid if $b<n-1$.
}
\label{tab:case1}
\begin{tabular}{l|l|llll}
\multicolumn{6}{l}{$\rho_1=\max(X_1)=b\,A\,a\,(a+1)\,B\,C$} \\
\multicolumn{6}{l}{$\pi_1=\min(X_1)=A\,a\,b\,(a+1)\,B\,C$} \\ \hline
edges inc.\ only with $\ide_n$ & edges inc.\ only with $X_1$ & & & & \\
$(b-1,b)$ & $(a+1,b)$ & & & & \\
$(b,b+1)$ \hspace{2mm}[1] & $(b-1,b+1)$ \hspace{2mm}[1] & & & & \\ \hline
6-cycles inc.\ only with $\ide_n$ & 6-cycles inc.\ only with $X_1$ & $E_{12}$ & $E_{23}$ & $E_{13\emptyset}$ & $E_{132}$ \\
$C(\ide_n,(b-2,b-1,b))$                 & $C(\pi_1,(a,a+1,b))$                & (v)  & (iv) & ?  & (*)  \\
$C(\ide_n,(b-1,b,b+1))$ \hspace{2mm}[1] & $C(\pi_1,(a+1,a+2,b))$              & (v)  & (iv) & (iv) & ?    \\
$C(\ide_n,(b,b+1,b+2))$ \hspace{2mm}[2] & $C(\pi_1,(b-2,b-1,b+1))$ \hspace{2mm}[1] & (v)  & (ii) & ? & (ii) \\
                                        & $C(\pi_1,(b-1,b+1,b+2))$ \hspace{2mm}[2] & (ii) & (v)  & ? & (ii) \\
\end{tabular}
\end{table}

For the moment we assume that $b<n-1$.
As Table~\ref{tab:case1} shows, there are two edges incident with~$\ide_n$ that are labelled with a transposition that does not appear at any edge incident with~$X_1$.
Conversely, there are two edges incident with~$X_1$ that are labelled with a transposition that does not appear at any edge incident with~$\ide_n$.
Together with the other edges incident with~$X_1$, we obtain three pairs of edges incident only with~$\ide_n$ that define a 6-cycle in the weak order on~$S_n$, and four pairs of edges incident only with~$X_1$ that define a 6-cycle.
All the latter 6-cycles are $\{5,6\}$-cycles in the quotient graph~$Q_R$, showing that the number of $\{5,6\}$-cycles incident with a vertex of~$Q_R$ is by one higher for~$X_1$ than for~$\ide_n$.
The argument that at most one edge from each 6-cycle is a bar, is given at the bottom right of the table, separately for each of the various sets of edges on each cycle.
E.g., for the 6-cycle $C(\pi_1,(a,a+1,b))$, the two edges in~$E_{12}$ belong to the fence $f(a,a+1,\emptyset)$, and the corresponding arc is not in the diagram by~(v).
It remains to argue about case~(*) in the table, i.e., the arc $f(a,b,\{a+1\})$.
This arc is non-simple, and so it is not in the reduced diagram.
Moreover, in the forcing order it can only be forced by a simple arc $f(a',b,\emptyset)$ with $a'\geq a+1$, which is impossible by~(iv), or by the simple short right arc $f(a,a+2,\{a+1\})$, which is impossible by the extra assumption we imposed at the beginning of case~1.
It follows that the arc $f(a,b,\{a+1\})$ is not in the diagram of~$R$.

In the cases $b=n-1$ and $b=n$ one or two of the 6-cycles in the first two columns of Table~\ref{tab:case1} are invalid, but the remaining 6-cycles still result in a surplus of $\{5,6\}$-cycles incident with~$X_1$.

\textbf{Case 2:} $b-a \geq 3$ and the short right arc $f(a,a+2,\{a+1\})$ is in the diagram of~$R$.
Consider the equivalence class~$X_2$ containing the permutation $\pi_2:=A\,(a+1)\,a\,b\,B\,C$.
This permutation has two descents, $(a+1,a)$ and $(b,a+2)$, and it can be checked that $\pi_2=\min(X_2)$.

\textit{Subcase 2a:} We now additionally assume that if $a>1$, then the short left arc $f(a-1,a+1,\emptyset)$ is not in the diagram of~$R$.
Using this assumption, it can be checked that $\rho_2:=\max(X_2)=A\,(a+1)\,b\,B\,C\,a$.
As before, we now consider all transpositions that appear as edge labels at only either $\ide_n$ or $X_2$, and we consider the resulting pairs of transpositions that define a 6-cycle in the weak order on~$S_n$, yielding the 6-cycles shown in Table~\ref{tab:case2a}, where some of them exist only under the extra conditions on~$a$ and~$b$ stated in the table.

\begin{table}[h!]
\caption{Summary of arguments in case~2a.
Some 6-cycles are only valid under the following extra conditions: [1] $b<n$, [2] $b<n-1$, [3] $b>a+3$, [4] $a>1$, [5] $a>2$.
}
\label{tab:case2a}
\begin{tabular}{l|ll}
\multicolumn{3}{l}{$\rho_2=\max(X_2)=A\,(a+1)\,b\,B\,C\,a$} \\
\multicolumn{3}{l}{$\pi_2=\min(X_2)=A\,(a+1)\,a\,b\,B\,C$} \\ \hline
edges inc.\ only with $\ide_n$ & edges inc.\ only with $X$ & \\
$(a-1,a)$ \hspace{2mm}[4] & $(a-1,a+1)$ \hspace{2mm}[4] & \\
$(a+1,a+2)$ & $(a+1,b)$ & \\
$(b-1,b)$ \hspace{2mm}[3] & $(a+2,b)$ \hspace{2mm}[3] & \\
$(b,b+1)$ \hspace{2mm}[1] & $(b-1,b+1)$ \hspace{2mm}[1] & \\ \hline
6-cycles inc.\ only with $\ide_n$ & 6-cycles inc.\ only with $X$ & \\
$C(\ide_n,(a-2,a-1,a))$ \hspace{2mm}[5] & $C(\pi_2,(a-2,a-1,a+1))$ \hspace{2mm}[5] & \\
$C(\ide_n,(a,a+1,a+2))$ & $C(\rho_2,(a-1,a+1,b))$ \hspace{2mm}[4] & (*) \\
$C(\ide_n,(a+1,a+2,a+3))$ \hspace{2mm}[3] & $C(\pi_2,(a,a+1,b))$        & (**) \\
$C(\ide_n,(b-2,b-1,b))$ \hspace{2mm}[3] & $C(\rho_2,(a+1,a+2,b))$ \hspace{2mm}[3] & \\
$C(\ide_n,(b-1,b,b+1))$ \hspace{2mm}\hspace{2mm}[1]+[3] & $C(\pi_2,(a+2,a+3,b))$ \hspace{2mm}[3] & \\
$C(\ide_n,(b,b+1,b+2))$ \hspace{2mm}[2] & $C(\pi_2,(b-2,b-1,b+1))$ \hspace{2mm}[1]+[3] & \\
                                        & $C(\pi_2,(b-1,b+1,b+2))$ \hspace{2mm}[2] & \\
\end{tabular}
\end{table}

Unlike in case~1, where we argued that there are \emph{more} $\{5,6\}$-cycles incident with~$X_1$ than with~$\ide_n$, in case~2 we argue that there are \emph{fewer} $\{5,6\}$-cycles incident with~$X_2$ than with~$\ide_n$.
For this we consider the two 6-cycles marked with~(*) and~(**) in the table, and argue that each of them is contracted to a 4-cycle in the quotient graph~$Q_R$.
Indeed, for the 6-cycle $C(\rho_2,(a-1,a+1,b))$, the edges in~$E_{12}$ are not bars by the assumption that the short left arc $f(a-1,a+1,\emptyset)$ is not in the diagram of~$R$, the edges in~$E_{23}$ are not bars by~(iv), the edge in~$E_{13\emptyset}$ is a bar, as the left arc $f(a-1,b,\emptyset)$ is forced by the left arc $f(a,b,\emptyset)$ in the forcing order, and the edge in~$E_{132}$ is a bar, as the arc $f(a-1,b,\{a+1\})$ is forced by the short right arc $f(a,a+2,\{a+1\})$.
For the 6-cycle $C(\pi_2,(a,a+1,b))$, the edges in~$E_{12}$ are not bars by~(v), the edges in~$E_{23}$ are not bars by~(iv), the edge in $E_{13\emptyset}$ is a bar by~(i), and the edge in~$E_{132}$ is a bar, as the arc $f(a,b,\{a+1\})$ is forced by the short right arc $f(a,a+2,\{a+1\})$.

As one can check from the table, the deficiency of $\{5,6\}$-cycles incident with~$X_2$ compared to~$\ide_n$ continues to hold even when some of the 6-cycles in Table~\ref{tab:case2a} are invalid as a consequence of some or all of the conditions [1]--[5] being violated, as the cycle marked with~(**) is always valid.

\textit{Subcase 2b:} We now assume that $a>1$ and that the short left arc $f(a-1,a+1,\emptyset)$ is in the diagram of~$R$.
Using this assumption, it can be checked that $\rho_2':=\min(X_2)=(a+1)\,b\,B\,C\,A\,a$.
Proceeding similarly to before, we obtain a table that differs from Table~\ref{tab:case2a} exactly by omitting all lines marked~[4] or~[5], i.e., we obtain the same conclusion that there is a deficiency of $\{5,6\}$-cycles incident with~$X_2$ compared to~$\ide_n$.

\textbf{Case 3:} $a<n-2$ and $b=a+2$.
In this case we reconsider the equivalence class~$X_1$ defined in case~1, yielding the following simplified Table~\ref{tab:case3}.

\begin{table}[h!]
\caption{Summary of arguments in case~3.
The 6-cycle marked with~[1] is only valid if $a<n-3$.
}
\label{tab:case3}
\begin{tabular}{l|ll}
\multicolumn{3}{l}{$\rho_1=\max(X_1)=(a+2)\,A\,a\,(a+1)\,C$} \\
\multicolumn{3}{l}{$\pi_1=\min(X_1)=A\,a\,(a+2)\,(a+1)\,C$} \\ \hline
edges inc.\ only with $\ide_n$ & edges inc.\ only with $X_1$ & \\
$(a+2,a+3)$ & $(a+1,a+3)$ & \\ \hline
6-cycles inc.\ only with $\ide_n$ & 6-cycles inc.\ only with $X_1$   & \\
$C(\ide_n,(a+2,a+3,a+4))$ & $C(\rho_1,(a,a+1,a+3)$                   & (*) \\
                          & $C(\pi_1,(a+1,a+3,a+4))$ \hspace{2mm}[1] & \\
\end{tabular}
\end{table}

The cycle $C(\ide_n,(a+2,a+3,a+4))$ is also a 6-cycle incident with~$\ide_n$ in the quotient graph~$Q_R$ by~(ii) and~(v).
We now show that the 6-cycle $C(\rho_1,(a,a+1,a+3))$ marked with~(*) is a 5-cycle incident with $X_1$ in~$Q_R$, which proves that~$X_1$ is incident with one more 5-cycle than~$\ide_n$.
Indeed, the edges in~$E_{12}$ of the marked cycle are not bars by~(v), the edges in~$E_{23}$ are not bars by~(ii), the edge in~$E_{132}$ is not a bar, as the right arc $f(a,a+3,\{a+1,a+2\})$ is not in the diagram of~$R$ by~(ii), and none of the short right arcs $f(a,a+2,\{a+1\})$ or $f(a+1,a+3,\{a+2\})$ that might force it is in the diagram by the assumption that the diagram has no loops, or by~(ii), respectively.
Furthermore, the edge in~$E_{13\emptyset}$ is a bar, as the arc $f(a,a+3,\{a+2\})$ is forced by the short left arc $f(a,a+2,\emptyset)$, which is in the diagram by~(i).

\textbf{Case 4:} $a=n-2$ and $b=a+2=n$.
In this case the short right arc $f(n-2,n,\{n-1\})$ is not in the diagram of~$R$, by the assumption that the diagram contains no loops.

\textit{Subcase 4a:} We now additionally assume that the short left arc $f(n-3,n-1,\emptyset)$ is not in the diagram of~$R$.
In this subcase, we consider two equivalence classes~$X_4$ and~$Y_4$ that are distinct from~$\ide_n$.
The first equivalence class~$X_4$ is the one containing the permutation $\pi_4:=A\,n\,(n-1)\,(n-2)$, and one can check that $\pi_4=\min(X_4)$ and that $\rho_4:=\max(X_4)=n\,A\,(n-1)\,(n-2)$.
The second equivalence class~$Y_4$ contains only a single permutation $\sigma_4=\min(Y_4)=\max(Y_4)=A\,(n-1)\,n\,(n-2)$.
There is only a single 6-cycle incident with only either~$X_4$ or~$Y_4$, namely $C(\rho_4,(n-3,n-2,n-1))$, and one can argue that it is a $\{5,6\}$-cycle in the quotient graph~$Q_R$, implying that there are more $\{5,6\}$-cycles incident with~$X_4$ than with~$Y_4$ in~$Q_R$; see Table~\ref{tab:case4a}.

\begin{table}[h!]
\caption{Summary of arguments in case~4a.
}
\label{tab:case4a}
\begin{tabular}{l|l}
$\rho_4=\max(X_4)=n\,A\,(n-1)\,(n-2)$ \\
$\pi_4=\min(X_4)=A\,n\,(n-1)\,(n-2)$ & $\sigma_4=\min(Y_4)=\max(Y_4)=A\,(n-1)\,n\,(n-2)$ \\ \hline
edges inc.\ only with $X_4$ & edges inc.\ only with $Y_4$ \\
$(n-2,n-1)$ & $(n-2,n)$ \\ \hline
6-cycles inc.\ only with $X_4$ & 6-cycles inc.\ only with $Y_4$ \\
$C(\rho_4,(n-3,n-2,n-1))$ & \\
\end{tabular}
\end{table}

\textit{Subcase 4b:} We now assume that the short left arc $f(n-3,n-1,\emptyset)$ is in the diagram of~$R$.
We consider the equivalence class~$X_4'$ that contains the permutation $\pi_4':=A\,(n-1)\,(n-2)\,n$.
One can check that $\pi_4'=\min(X_4')$ and that $\rho_4':=\max(X_4')=(n-1)\,A\,(n-2)\,n$.
There is only a single 6-cycle incident with only either~$\ide_n$ or~$X_4'$, namely $C(\rho_4',(n-3,n-2,n))$; see Table~\ref{tab:case4b}.
We now argue that this is a $5$-cycle in the quotient graph~$Q_R$, implying that there are more $5$-cycles incident with~$X_4'$ than with~$\ide_n$ in~$Q_R$.
Indeed, the edges in~$E_{12}$ are not bars by~(v), the edges in~$E_{23}$ are not bars, as the short right arc $f(n-2,n,\{n-1\})$ is not in the diagram by the assumption that it has no loops.
Morever, the edge in~$E_{13\emptyset}$ is a bar, as the arc $f(n-3,n,\{n-1\})$ is forced by the short left arc $f(n-3,n-1,\emptyset)$.
Finally, the edge in~$E_{132}$ is not a bar, as the right arc $f(n-3,n,\{n-2,n-1\})$ is not in the reduced diagram by~(iii), and the two short right arcs $f(n-3,n-1,\{n-2\})$ and $f(n-2,n,\{n-1\})$ that may force it in the forcing order are not in the diagram by the assumption of loop-freeness.

\begin{table}[h!]
\caption{Summary of arguments in case~4b.
}
\label{tab:case4b}
\begin{tabular}{l|l}
\multicolumn{2}{l}{$\rho_4'=\max(X_4')=(n-1)\,A\,(n-2)\,n$} \\
\multicolumn{2}{l}{$\pi_4'=\min(X_4')=A\,(n-1)\,(n-2)\,n$} \\ \hline
edges inc.\ only with $\ide_n$ & edges inc.\ only with $X_4'$ \\
$(n-1,n)$ & $(n-2,n)$ \\ \hline
6-cycles inc.\ only with $\ide_n$ & 6-cycles inc.\ only with $X_4'$ \\
                                  & $C(\rho_4',(n-3,n-2,n))$ \\
\end{tabular}
\end{table}

This completes the proof of the lemma.
\end{proof}

\begin{figure}
\includegraphics{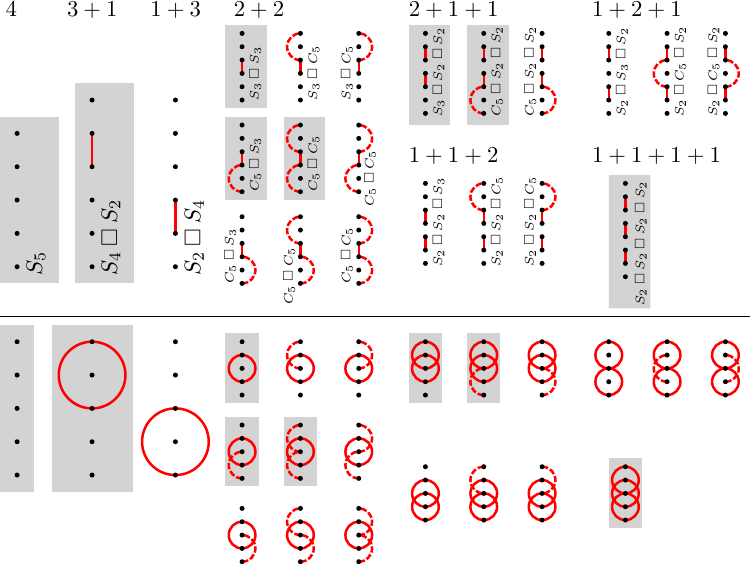}
\caption{Reduced arc diagrams of all 22 vertex-transitive quotient graphs~$\cV_5$ (bottom), plus reduced arc diagrams of corresponding isomorphic quotient graphs in~$\cQ_n$, $n\geq 5$, with the maximal number of non-essential fences (top), plus the corresponding integer compositions of~$4$.
The dashed short arcs correspond to copies of the 5-cycle~$C_5$ in the Cartesian products.
The 8 non-isomorphic quotient graphs~$\cV_5'$ are highlighted, and they correspond to integer partitions.}
\label{fig:V5}
\end{figure}

With Lemma~\ref{lem:non-empty} in hand, we are now ready to characterize vertex-transitive quotient graphs via their arc diagram.

\begin{lemma}
\label{lem:vt}
For $n\geq 2$, every vertex-transitive quotient graph from~$\cV_n$ is a Cartesian product with factors from the set of graphs
\begin{equation}
\label{eq:prime}
\cP:=\{S_2,S_3,S_4,\ldots\}\cup\{C_5\}.
\end{equation}
The corresponding reduced arc diagrams are products of empty diagrams on at least 2~points, and of diagrams on 3~points that have either a short left arc or a short right arc.
\end{lemma}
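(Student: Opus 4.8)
The plan is to argue by strong induction on $n\ge 2$. Throughout I use that, by Theorem~\ref{thm:regular} together with $\cV_n\seq\cR_n$, the reduced arc diagram of every $R$ with $Q_R$ vertex-transitive is simple, so I only ever deal with simple reduced arc diagrams. For the induction step, fix $R\in\cC_n^*$ with $Q_R$ vertex-transitive and $n\ge 4$, and split into three cases according to the reduced arc diagram of $R$. If it is \emph{empty}, then $F_R=\emptyset$ (a nonempty finite poset has a maximal element), so no cover edge is contracted and $Q_R=S_n\in\cP$; here the conclusion holds with the trivial one-factor decomposition and an empty diagram on $n\ge 2$ points. If it is \emph{nonempty and contains no loop}, then Lemma~\ref{lem:non-empty} applies (we are in the regime $n\ge 4$, simple, loop-free, nonempty) and tells us $Q_R$ is not vertex-transitive, a contradiction; so this case cannot arise.

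It remains to handle the case that the reduced arc diagram of $R$ \emph{contains a loop centered at some $s\in[2,n-1]$}, i.e.\ $f(s-1,s+1,\emptyset),f(s-1,s+1,\{s\})\in F_R$. Here I invoke the structural observation recorded before Lemma~\ref{lem:non-empty} (an instance of Lemma~\ref{lem:dim}): $Q_R\simeq Q_A\cprod Q_B$, where $A$ and $B$ are the congruences whose reduced arc diagrams are the restrictions of that of $R$ to the point intervals $[1,s]$ and $[s,n]$, and the reduced arc diagram of $R$ is the product of those of $A$ and $B$. Note that $A\in\cC_s^*$ and $B\in\cC_{n+1-s}^*$: essentiality is inherited from $R$ because $F_A=F_R|_{[1,s]}$ and $F_B=F_R|_{[s+1,n+1]}-s$ contain no non-essential fence if $F_R$ does not. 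Since $2\le s\le n-1$, both $s$ and $n+1-s$ lie strictly between $1$ and $n$, and both diagrams have at least $2$ points, so the induction hypothesis applies to $A$ and $B$. Because quotient graphs are connected, Lemma~\ref{lem:cart-sim}(iii) turns vertex-transitivity of $Q_R\simeq Q_A\cprod Q_B$ into vertex-transitivity of each factor, so $Q_A\in\cV_s$ and $Q_B\in\cV_{n+1-s}$. By induction each of $Q_A,Q_B$ is a Cartesian product of graphs from $\cP$ whose reduced arc diagram is a product of empty diagrams on $\ge 2$ points and of $3$-point diagrams carrying a single short left or right arc; concatenating the two decompositions (on the graph side via associativity and commutativity of $\cprod$, on the arc-diagram side via associativity of the arc-diagram product) yields the assertion for $R$.

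Finally, the base cases $n\in\{2,3\}$ are checked directly. For $n=2$ the only essential congruence has empty diagram and $Q_R=S_2$. For $n=3$ a simple reduced arc diagram uses only arcs between the two extreme points, so it is either empty ($Q_R=S_3$), a single short left or right arc ($Q_R=C_5$), or both such arcs, which is a loop centered at $2$, in which case Lemma~\ref{lem:dim} gives $Q_R\simeq S_2\cprod S_2$; in every case the conclusion holds. I expect the only real obstacle to be Lemma~\ref{lem:non-empty} — ruling out \emph{all} simple, loop-free, nonempty reduced arc diagrams — which has already been established; given it, the argument above is pure bookkeeping, the only subtleties being the inheritance of essentiality to the two factor congruences and the appeal to connectedness when applying Lemma~\ref{lem:cart-sim}(iii).
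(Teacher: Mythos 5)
Your proof is correct and follows essentially the same strategy as the paper's: factor out every loop via Lemma~\ref{lem:dim} (phrased here as strong induction rather than the paper's exhaustive iterative elimination, but these are the same), pass vertex-transitivity to the factors using Lemma~\ref{lem:cart-sim}(iii), and then apply Lemma~\ref{lem:non-empty} to conclude that each loop-free factor on at least $4$ points has empty diagram. Your explicit handling of the base cases $n\in\{2,3\}$ and of essentiality of the factor congruences is a bit more careful than the paper's terse treatment, though the verbatim citation of the index set $[s+1,n+1]$ from Lemma~\ref{lem:dim} (which is stated for $R\in\cC_{n+1}$, not $R\in\cC_n$) is a minor slip that does not affect the argument.
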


\begin{proof}
Consider the reduced arc diagram of a lattice congruence $R\in\cV_n$.
By Lemma~\ref{lem:dim}, for any loop in the diagram centered at some point $s\in[2,n-1]$, we may split the diagram into two diagrams on the intervals~$[1,s]$ and~$[s,n]$, and $Q_R$ is the Cartesian product of the quotient graphs of the two lattice congruences defined by the reduced arc diagrams on the two intervals.
We repeat this elimination of loops exhaustively, yielding a factorization of $Q_R\simeq Q_{A_1}\cprod\cdots\cprod Q_{A_p}$ such that the reduced arc diagram of every lattice congruence $A_i\in\cC_{n_i}^*$, $n_i\geq 2$, has no loops.
As $Q_R$ is vertex-transitive, Lemma~\ref{lem:cart-sim}~(iii) yields that all factors~$Q_{A_i}$ must be vertex-transitive.
Therefore, by Lemma~\ref{lem:non-empty}, for any factor with $n_i\geq 4$, we know that the arc diagram of~$A_i$ must be empty, i.e., we have $Q_{A_i}=S_{n_i}$.
For any factor with $n_i=3$, there are exactly three essential lattice congruences yielding a vertex-transitive quotient graph, given by an arc diagram on 3~points that is either empty, or that has a short left arc or a short right arc, and the corresponding graphs are $Q_{A_i}=S_3$ in the first case, and $Q_{A_i}=C_5$ in the latter two cases.
For any factor with $n_i=2$, there is exactly one essential lattice congruence, represented by an empty arc diagram on 2~points, i.e., we have $Q_{A_i}=S_2$.
This proves the lemma.
\end{proof}

Given an integer~$n$, a \emph{composition of $n$} is a way to write $n$ as a sum of positive integers $a_1,\ldots,a_p$, i.e., $n=a_1+\cdots+a_p$.
A \emph{partition of~$n$} is a composition of~$n$ where the summands are sorted decreasingly, i.e., $a_1\geq \cdots \geq a_p$.

\begin{theorem}
\label{thm:comp}
For every $n\geq 2$ and every integer composition $a_1+\cdots+a_p$ of~$n-1$ with exactly $k$ many~2s, there are $3^k$ vertex-transitive quotient graphs in~$\cV_n$, and these graphs are isomorphic to the Cartesian products $G_1\cprod\cdots\cprod G_p$, where $G_i=S_{a_i+1}$ if $a_i\neq 2$ and $G_i\in\{S_3,C_5\}$ if $a_i=2$ for all $i=1,\ldots,p$.
The corresponding reduced arc diagrams are products of empty diagrams on $a_i+1$ points if $a_i\neq 2$, and of diagrams on 3~points that are either empty, or have one short left arc, or one short right arc that connects the first and third point if $a_i=2$.
All of these graphs are distinct, and every graph in~$\cV_n$ arises in this way.
\end{theorem}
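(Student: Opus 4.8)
The plan is to set up a bijection between $\cV_n$ and the set of pairs $\big((a_1,\dots,a_p),(A_1,\dots,A_p)\big)$, where $a_1+\cdots+a_p$ is an integer composition of~$n-1$ and $A_i$ is an arc diagram on $a_i+1$ points that is the (forced) empty diagram whenever $a_i\neq 2$ and is one of the three $3$-point diagrams --- empty, one short left arc, or one short right arc --- whenever $a_i=2$. Granting this bijection, the count~$3^k$ is immediate: a part equal to~$2$ offers three choices of $A_i$ and every other part exactly one, so a fixed composition with~$k$ twos yields $3^k$ pairs, hence $3^k$ congruences in~$\cV_n$; the identification of the quotient graph as $G_1\cprod\cdots\cprod G_p$, the distinctness of all these congruences, and the exhaustiveness (``every graph in~$\cV_n$ arises'') will all drop out of the construction.

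For the map $\cV_n\to\{\text{pairs}\}$, start from $R\in\cV_n$ and invoke Lemma~\ref{lem:vt}: the reduced arc diagram of~$R$ is a product of empty diagrams on at least~$2$ points and of $3$-point diagrams carrying exactly one short arc. Concretely, let $\{s_1<\cdots<s_{p-1}\}\seq[2,n-1]$ be the set of loop centers of the diagram of~$R$, and put $s_0:=1$, $s_p:=n$. Repeatedly splitting at these loops via Lemma~\ref{lem:dim} writes $Q_R\simeq Q_{A_1}\cprod\cdots\cprod Q_{A_p}$, where $A_i$ is the essential congruence whose reduced arc diagram is the restriction of the diagram of~$R$ to the block of points~$[s_{i-1},s_i]$, and every such block diagram is loop-free. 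By Lemma~\ref{lem:cart-sim}~(iii) each factor~$Q_{A_i}$ is vertex-transitive, so by Lemma~\ref{lem:non-empty} the block diagram of~$A_i$ is empty whenever the block has at least~$4$ points, is empty or carries a single short (left or right) arc when the block has~$3$ points, and is empty when the block has~$2$ points. Writing $m_i$ for the number of points of the $i$-th block and $a_i:=m_i-1$, the fact that consecutive blocks overlap in exactly one point gives $\sum_i(m_i-1)=n-1$, so $(a_1,\dots,a_p)$ is a composition of~$n-1$; moreover $A_i$ is the empty diagram on $a_i+1$ points unless $a_i=2$, in which case it is one of the three $3$-point diagrams listed above. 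This is the pair assigned to~$R$.

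For the inverse map, given a composition $a_1+\cdots+a_p$ of~$n-1$ and a choice of atoms $A_1,\dots,A_p$ as above, assemble the product diagram~$D$ of $A_1,\dots,A_p$ in order. One verifies directly that $D$ is the reduced arc diagram of an essential congruence~$R$: all drawn arcs are simple, every drawn fence (atom fence or gluing loop) spans at least three points so $R$ is essential, and no two drawn arcs are comparable in the forcing order --- within a block the atom is reduced, and the $p-1$ gluing loops involve only the three points around their centers --- whence the loop centers of~$D$ are exactly the $p-1$ gluing points. Iterating Lemma~\ref{lem:dim} gives $Q_R\simeq Q_{A_1}\cprod\cdots\cprod Q_{A_p}$ with $Q_{A_i}=S_{a_i+1}$ if $a_i\neq 2$ and $Q_{A_i}\in\{S_3,C_5\}$ if $a_i=2$ (using the two presentations of~$C_5$ recalled before Lemma~\ref{lem:non-empty}); since each $S_m$ and $C_5$ is vertex-transitive, Lemma~\ref{lem:cart-sim}~(iii) shows $Q_R$ is vertex-transitive, so $R\in\cV_n$ and $Q_R$ has the stated form $G_1\cprod\cdots\cprod G_p$. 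Because the blocks between the loops of~$D$ recover precisely the atoms $A_1,\dots,A_p$, and the loop centers of the diagram of any $R\in\cV_n$ recover its blocks, the two maps are mutually inverse; in particular distinct pairs give distinct reduced arc diagrams and hence distinct congruences, so all the congruences produced from compositions of~$n-1$ are distinct, and by Lemma~\ref{lem:vt} every element of~$\cV_n$ arises this way.

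I expect the main obstacle to be the verification that the product construction always yields a genuine \emph{reduced} arc diagram of an \emph{essential} congruence whose loop centers are exactly the $p-1$ gluing points; this is what makes the block decomposition of a vertex-transitive diagram well-defined and the two maps honest inverses. The rest is bookkeeping --- matching block sizes to parts of the composition and counting the $3^k$ atom choices --- and the substantive structural inputs (Lemmas~\ref{lem:non-empty}, \ref{lem:dim}, and the vertex-transitivity facts in Lemma~\ref{lem:cart-sim}) are already established.
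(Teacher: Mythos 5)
Your proof is correct and follows essentially the same route as the paper's: construct the product diagram from the composition, decompose via Lemma~\ref{lem:dim}, transfer vertex-transitivity across Cartesian products via Lemma~\ref{lem:cart-sim}~(iii), and cite Lemma~\ref{lem:vt} for exhaustiveness. You are somewhat more explicit than the paper in framing the argument as a bijection and in flagging the need to check that the assembled product diagram really is the reduced arc diagram of an essential congruence whose loop centers are exactly the $p-1$ gluing points; the paper takes these verifications as immediate, but your extra care there is sound and does not constitute a different approach.
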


\begin{proof}
The proof is illustrated in Figure~\ref{fig:V5}.
We consider an integer composition $a_1+\cdots+a_p$ of~$n-1$ with exactly $k$ many~2s.
For each summand~$a_i\neq 2$, we consider the empty arc diagram on~$a_i+1$ points, and for each summand~$a_i=2$, we consider an arc diagram on 3~points that is either empty, or has one short left arc, or one short right arc.
As the latter case happens $k$ times, we have $3^k$ choices.
Consider the lattice congruences $A_1,\ldots,A_p$ defined by these arc diagrams, and consider the lattice congruence $R\in\cC_n^*$ whose reduced diagram is the product of these diagrams.
By Lemma~\ref{lem:dim}, we have that $Q_R\simeq Q_{A_1}\cprod\cdots\cprod Q_{A_p}$.
Moreover, if $a_i\neq 2$, then we have $Q_{A_i}=S_{a_i+1}$, and if $a_i=2$, then we have $Q_{A_i}\in\{S_3,C_5\}$, i.e., all factors in this product are vertex-transitive.
Applying Lemma~\ref{lem:cart-sim}~(iii), we see that $Q_R$ is vertex-transitive as well.
Clearly, all arc diagrams constructed in this way from integer compositions are distinct, yielding distinct graphs in~$\cV_n$.
By Lemma~\ref{lem:vt}, every graph in~$\cV_n$ arises from such a composition.
\end{proof}

The following corollary is an immediate consequence of Theorem~\ref{thm:comp}.

\begin{corollary}
\label{cor:count-Vn}
Let $c_{n,k}$ denote the number of integer compositions of~$n$ with exactly $k$ many~2s.
For $n\geq 2$, we have $|\cV_n|=\sum_{k\geq 0} 3^k c_{n-1,k}$.
\end{corollary}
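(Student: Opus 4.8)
The plan is to obtain the count by reading it straight off Theorem~\ref{thm:comp}, which already establishes the required bijection. First I would note that Theorem~\ref{thm:comp} exhibits, for each integer composition $a_1+\cdots+a_p$ of $n-1$, a family of exactly $3^k$ members of $\cV_n$, where $k$ is the number of summands of this composition equal to $2$; moreover the theorem asserts that these families are pairwise disjoint (\emph{``all of these graphs are distinct''}) and exhaust $\cV_n$ (\emph{``every graph in $\cV_n$ arises in this way''}). Consequently $\cV_n$ is partitioned according to which composition of $n-1$ a graph comes from, and
\[
|\cV_n| = \sum_{a_1+\cdots+a_p\,=\,n-1} 3^{k(a_1,\ldots,a_p)},
\]
the sum ranging over all compositions of $n-1$, with $k(a_1,\ldots,a_p)$ the number of parts equal to $2$.

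Next I would regroup this sum by the value of $k$. By definition there are exactly $c_{n-1,k}$ compositions of $n-1$ having exactly $k$ parts equal to $2$, so collecting the terms with a common value of $k$ yields
\[
|\cV_n| = \sum_{k\geq 0} 3^k\, c_{n-1,k},
\]
which is the claimed identity. The sum is finite because $c_{n-1,k}=0$ as soon as $2k>n-1$.

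The only step that requires any care is the disjointness of the families: one must be sure that a single graph in $\cV_n$ is never assigned to two different compositions, as otherwise the terms $3^k c_{n-1,k}$ would overcount. This is precisely the content of the ``all of these graphs are distinct'' and ``every graph in $\cV_n$ arises in this way'' clauses of Theorem~\ref{thm:comp}, so no additional argument is needed, and the corollary is immediate once those clauses are in place. I do not anticipate any genuine obstacle here — the substance of the counting lives entirely in Theorem~\ref{thm:comp}, and the corollary is a bookkeeping step.
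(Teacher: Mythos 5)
Your argument is correct and matches the paper exactly: the paper states the corollary is an immediate consequence of Theorem~\ref{thm:comp}, and your write-up simply spells out the regrouping of the sum over compositions by the number of parts equal to~2. Nothing further is needed.
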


Define $a_n:=\sum_{k\geq 0} 3^k c_{n-1,k}$ for $n\geq 2$ and $b_n:=a_{n+1}$ for $n\geq 1$.
The sequence~$b_n$ is OEIS sequence~A052528, and the first few terms are $1, 4, 8, 22, 52, 132, 324, 808, 2000$.
This sequence also has a linear recurrence, namely $b_0=b_1=1$ and $b_n=2b_{n-2}+\sum_{0\leq i\leq n-1}b_i$ for $n\geq 2$.
The generating function is $(1-x)/(2x^3-2x^2-2x+1)$, so the asymptotic growth of~$b_n$ and~$a_n$ is $(1/x_0)^n$, where $x_0$ is the smallest positive root of $2x^3-2x^2-2x+1$, numerically $x_0\approx 0.403032$ and $1/x_0\approx 2.481194$.

\begin{theorem}
\label{thm:part}
For every $n\geq 2$ and every integer partition $a_1+\cdots+a_p$ of~$n-1$ with exactly $k$ many~2s, there are $k+1$ vertex-transitive quotient graphs in~$\cV_n'$, and these graphs are the Cartesian products $G_1\cprod\cdots\cprod G_p$, where $G_i=S_{a_i+1}$ if $a_i\neq 2$ and $G_i\in\{S_3,C_5\}$ if $a_i=2$ for all $i=1,\ldots,p$.
The corresponding reduced arc diagrams are products of empty diagrams on $a_i+1$ points if $a_i\neq 2$, and of $k$ diagrams on 3 points, exactly $j\in\{0,\ldots,k\}$ of which are empty and $k-j$ of which have one short left arc that connects the first and third point, if~$a_i=2$.
All of these graphs are non-isomorphic, and every graph in~$\cV_n'$ arises in this way.
\end{theorem}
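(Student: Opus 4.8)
The plan is to reduce the isomorphism problem to the unique prime factorization of connected graphs with respect to the Cartesian product, once the possible prime factors are pinned down. By Theorem~\ref{thm:comp} (equivalently Lemma~\ref{lem:vt}), every graph in $\cV_n$ is isomorphic to a Cartesian product of members of the family $\cP=\{S_2,S_3,S_4,\dots\}\cup\{C_5\}$, and such a product arises from an integer composition $a_1+\cdots+a_p$ of $n-1$ with the $a_i$‑th factor equal to $S_{a_i+1}$ when $a_i\neq 2$ and to one of $S_3,C_5$ when $a_i=2$. So I first record the two facts that make factorization rigid: (a) the graphs in $\cP$ are pairwise non‑isomorphic, which is immediate since $\lvert V(S_m)\rvert=m!$ is strictly increasing in $m\geq 2$ and $5\notin\{2!,3!,4!,\dots\}$; and (b) every graph in $\cP$ is prime. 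For $C_5$ this is trivial because $5$ is prime, so a nontrivial Cartesian product could not have exactly $5$ vertices.

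The main technical point is that $S_n$ is prime for every $n\geq 2$. The cases $n\leq 2$ are trivial, so suppose $S_n\simeq G\cprod H$ with $\lvert V(G)\rvert,\lvert V(H)\rvert\geq 2$; then $G$ and $H$ are connected, hence each has an edge at every vertex. The invariant I use is, for a vertex $v$ and an unordered pair $\{e,f\}$ of edges at $v$, whether $\{e,f\}$ lies on a common $4$‑cycle through $v$. In $G\cprod H$, color each edge at $v$ according to whether it points in a $G$‑ or an $H$‑direction; both colors occur, and any bichromatic pair $\{e,f\}$ at $v=(g,h)$ lies on the mixed $4$‑cycle through $(g,h),(g',h),(g',h'),(g,h')$, so every pair that does \emph{not} lie on a common $4$‑cycle is monochromatic. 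Evaluating this at $v=\ide_n$: the $n-1$ edges there are the adjacent transpositions $s_1,\dots,s_{n-1}$, and a direct check in the cover graph shows that $\{s_i,s_j\}$ lies on a common $4$‑cycle through $\ide_n$ if and only if $\lvert i-j\rvert\geq 2$ (for $\lvert i-j\rvert\geq 2$ one uses the square obtained from $\ide_n$ by the two commuting adjacent transpositions $s_i$ and $s_j$; for $\lvert i-j\rvert=1$ one checks that the two neighbors $\ide_ns_i$ and $\ide_ns_{i+1}$ of $\ide_n$ have no common neighbor other than $\ide_n$). Hence the pairs forced to be monochromatic are exactly $\{s_i,s_{i+1}\}$, $i=1,\dots,n-2$, and these form a connected path on $s_1,\dots,s_{n-1}$; therefore all $n-1$ edges at $\ide_n$ receive the same color, contradicting that both colors occur. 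Thus $S_n$ is prime.

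With primality in hand, the remainder is bookkeeping. Fix a partition $\lambda\colon a_1\geq\cdots\geq a_p$ of $n-1$ with exactly $k$ parts equal to~$2$, and $j\in\{0,\dots,k\}$. Let $D_\lambda^{(j)}$ be the product (in the sense defined after Lemma~\ref{lem:cart-sim}) of the empty diagrams on $a_i+1$ points for the parts $a_i\neq 2$, of $j$ empty diagrams on $3$ points, and of $k-j$ diagrams on $3$ points carrying one short left arc. Iterating Lemma~\ref{lem:dim}, the associated quotient graph is the Cartesian product $G_\lambda^{(j)}$ of the graphs $S_{a_i+1}$ over the parts $a_i\neq 2$, of $j$ copies of $S_3$, and of $k-j$ copies of $C_5$; by Lemma~\ref{lem:cart-sim}(iii) it is vertex‑transitive, so $G_\lambda^{(j)}\in\cV_n$, with reduced arc diagram $D_\lambda^{(j)}$ as in the statement. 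Its multiset of prime factors is $\{S_{a_i+1}:a_i\neq 2\}$ together with $j$ copies of $S_3$ and $k-j$ copies of $C_5$, and from this multiset one reads off $j$, then $k-j$, and then $\lambda$ (assigning the part $m-1$ to each factor $S_m$ with $m\neq 3$ and the part $2$ to each of the $k$ factors equal to $S_3$ or $C_5$). By the unique prime factorization of connected graphs (which follows by iterating Lemma~\ref{lem:cart-sim}(ii) using (a) and (b)), $G_\lambda^{(j)}\simeq G_{\lambda'}^{(j')}$ if and only if $(\lambda,j)=(\lambda',j')$; in particular the graphs $G_\lambda^{(j)}$, $0\leq j\leq k$, are $k+1$ pairwise non‑isomorphic members of $\cV_n'$, and the whole family over all $\lambda$ and $j$ is pairwise non‑isomorphic. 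Conversely, Theorem~\ref{thm:comp} writes an arbitrary $G\in\cV_n$ as a Cartesian product from a composition of $n-1$; since by Lemma~\ref{lem:cart-sim}(i) only the multiset of factors matters, and $C_5\simeq C_5$ irrespective of whether its short arc is drawn left or right, sorting the composition into a partition $\lambda$ and letting $j$ be the number of $S_3$‑factors yields $G\simeq G_\lambda^{(j)}$. Hence $\cV_n'=\{G_\lambda^{(j)}\}$ with the stated arc diagrams and counts. The only step that is more than routine is the primality of $S_n$, established above.
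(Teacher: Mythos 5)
Your proof is correct and follows essentially the same route as the paper: use Theorem~\ref{thm:comp} to reduce to compositions of $n-1$, then invoke Lemma~\ref{lem:cart-sim}(i)--(ii) together with primality of the factors in $\cP$ to conclude that only the multiset of Cartesian factors matters, so compositions collapse to partitions, with a free choice of $j\in\{0,\dots,k\}$ of the $k$ parts equal to $2$ realized as $S_3$ rather than $C_5$. The one genuine addition is your self-contained argument that $S_n$ is Cartesian-prime for all $n\geq 2$, via coloring the edges at $\ide_n$ by product-direction and observing that consecutive adjacent transpositions $s_i,s_{i+1}$ never share a $4$-cycle through $\ide_n$, so the forced-monochromatic pairs chain all $n-1$ edges at $\ide_n$ into one color class; the paper merely asserts that all graphs in $\cP$ are prime without proof, so your argument fills a small but real gap.
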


The 8 non-isomorphic vertex-transitive graphs for~$n=5$ are highlighted in Figure~\ref{fig:V5}.

\begin{proof}
By Theorem~\ref{thm:comp}, our task is to consider all integer compositions $a_1+\cdots+a_p$ of $n-1$, and among the corresponding Cartesian products~$G_1\cprod\cdots\cprod G_p$, select those which are non-isomorphic graphs.
By Lemma~\ref{lem:cart-sim}~(i), reordering of the factors of any two Cartesian products yields isomorphic graphs, and as all graphs in the set~$\cP$ defined in~\eqref{eq:prime} are prime, Lemma~\ref{lem:cart-sim}~(ii) shows that these reordering operations are the only ones yielding isomorphic graphs.
Consequently, the non-isomorphic quotient graphs can be identified with integer partitions, obtained by sorting the summands of a composition decreasingly, and for a partition with $k$ many 2s, we may choose $j\in\{0,\ldots,k\}$ factors that are 6-cycles~$S_3$, and the remaining $k-j$ factors as 5-cycles~$C_5$.
\end{proof}

\iffull
\begin{figure}
\includegraphics{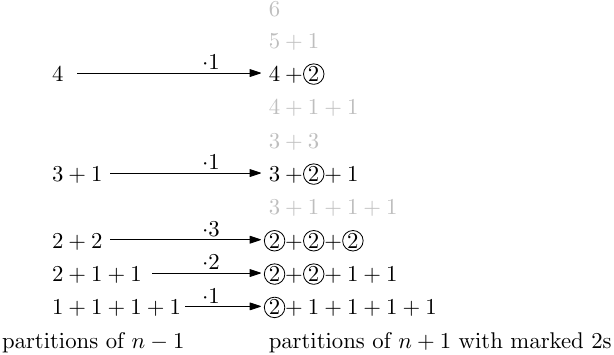}
\caption{Illustration of the proof of Corollary~\ref{cor:count-Vn'} for $n=5$.}
\label{fig:part}
\end{figure}
\fi

\begin{corollary}
\label{cor:count-Vn'}
Let $t_n$ denote the number of 2s in all integer partitions of~$n$.
For $n\geq 2$, we have $|\cV_n'|=t_{n+1}$.
\end{corollary}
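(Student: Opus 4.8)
The plan is to combine Theorem~\ref{thm:part} with a classical identity for the number of occurrences of a fixed part among all integer partitions. Throughout, write $p(m)$ for the number of partitions of~$m$, with the conventions $p(0):=1$ and $p(m):=0$ for $m<0$, and for a partition~$\lambda$ let $k(\lambda)$ be the number of parts of~$\lambda$ equal to~$2$, so that $t_m=\sum_{\lambda\vdash m}k(\lambda)$; in particular $t_0=t_1=0$. By Theorem~\ref{thm:part}, every graph in~$\cV_n'$ arises from a unique integer partition~$\lambda$ of~$n-1$, and a partition with exactly $k(\lambda)$ parts equal to~$2$ contributes exactly $k(\lambda)+1$ of these graphs. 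Summing over all partitions of~$n-1$ therefore gives
\begin{equation*}
  |\cV_n'| \;=\; \sum_{\lambda\vdash n-1}\bigl(k(\lambda)+1\bigr) \;=\; p(n-1)+\sum_{\lambda\vdash n-1}k(\lambda) \;=\; p(n-1)+t_{n-1}.
\end{equation*}

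The second ingredient is the identity $t_m=\sum_{i\ge 1}p(m-2i)$, stating that the total number of parts equal to~$2$ among all partitions of~$m$ equals $p(m-2)+p(m-4)+p(m-6)+\cdots$. I would prove this by counting occurrences with multiplicity: for each $i\ge1$, removing $i$ parts equal to~$2$ is a bijection between the partitions of~$m$ with at least $i$ parts equal to~$2$ and the (unrestricted) partitions of~$m-2i$, so there are $p(m-2i)$ of the former; summing the indicator ``$\lambda$ has at least $i$ parts equal to~$2$'' over all $i\ge1$ and all $\lambda\vdash m$ then counts each part~$2$ exactly once.

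Applying this identity with $m=n+1$ and splitting off the term $i=1$ yields
\begin{equation*}
  t_{n+1} \;=\; \sum_{i\ge 1}p(n+1-2i) \;=\; p(n-1)+\sum_{i\ge 1}p(n-1-2i) \;=\; p(n-1)+t_{n-1},
\end{equation*}
which is exactly the expression obtained for~$|\cV_n'|$ above, so $|\cV_n'|=t_{n+1}$. Equivalently, one can exhibit a direct bijection realizing all three reformulations at once: the $i$-th smallest part equal to~$2$ in a partition~$\mu$ of~$n+1$ corresponds to the pair $(i,\nu)$ with $\nu\vdash n+1-2i$ obtained by deleting $i$ parts equal to~$2$ from~$\mu$, and this pair in turn corresponds to the pair $(\lambda,j)$ with $\lambda:=\nu\cup\{2^{\,i-1}\}\vdash n-1$ and $j:=i-1\in\{0,\dots,k(\lambda)\}$, which by Theorem~\ref{thm:part} indexes the graph in~$\cV_n'$ whose Cartesian decomposition uses $j$ copies of~$S_3$ and $k(\lambda)-j$ copies of~$C_5$; see Figure~\ref{fig:part}. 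There is no substantial obstacle here; the only points requiring care are the edge conventions $p(0)=1$ and $p(m)=0$ for $m<0$ (so that small cases such as $n=2$, where $|\cV_2'|=1=t_3$, come out right) and the observation that in the bijection the partition~$\nu$ is allowed to keep further parts equal to~$2$.
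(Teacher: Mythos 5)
Your proof is correct. Both you and the paper start from Theorem~\ref{thm:part} to obtain $|\cV_n'|=\sum_{k\geq 0}(k+1)p_{n-1,k}$; the difference is how you show this equals~$t_{n+1}$. The paper gives a one-step bijection (insert one extra~$2$ into a partition of~$n-1$ and mark one of the resulting $k+1$ twos, landing on a partition of~$n+1$ with a marked~$2$). You instead first rewrite the sum as $p(n-1)+t_{n-1}$, then invoke the classical identity $t_m=\sum_{i\geq 1}p(m-2i)$ and split off the $i=1$ term to get $t_{n+1}=p(n-1)+t_{n-1}$. This is a genuinely different (and slightly more formulaic) route, and it has the small advantage of making the recurrence $t_{n+1}=p(n-1)+t_{n-1}$ explicit; on the other hand it relies on the auxiliary identity, which you do prove. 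Your closing ``equivalently'' paragraph then unwinds your algebraic argument into a direct bijection that is, after a change of variables ($i=j+1$, $\mu=\lambda\cup\{2\}$), the same bijection the paper uses. So in the end the two arguments coincide; the only thing to be careful about, which you flag, is the convention $p(0)=1$, $p(m)=0$ for $m<0$ in the identity $t_m=\sum_{i\geq1}p(m-2i)$, and everything checks out.
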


By Corollary~\ref{cor:count-Vn'}, the number of vertex-transitive quotient graphs for $n=2,\ldots,10$ is $t_n=1, 3, 4, 8, 11, 19, 26, 41, 56$, respectively, which is OEIS sequence~A024786.
It can be shown that $t_n=e^{\pi\sqrt{2n/3}(1+o(1))}$.

\begin{proof}
By Theorem~\ref{thm:part}, there are exactly $\sum_{k\geq 0} (k+1)p_{n-1,k}$ non-isomorphic vertex-transitive quotient graphs~$\cV_n'$, where $p_{n,k}$ denotes the number of integer partitions of~$n$ with exactly $k$ many~2s.
It remains to show that this sum equals~$t_{n+1}$.
\iffull
This argument is illustrated in Figure~\ref{fig:part}.
\fi
Given any integer partition of~$n-1$ with exactly $k$ many~2s, there are $(k+1)$ ways to insert another marked~2 into this partition, yielding a partition of~$n+1$ with a marked~2.
As all partitions of~$n+1$ with a marked~2 arise in this way, this corresponds exactly to counting the number of~2s in all integer partitions of~$n+1$.
\end{proof}

\begin{remark}
The lattice congruences that yield vertex-transitive quotient graphs described in Theorem~\ref{thm:comp} are precisely the $\delta$-permutree congruences from~\cite{MR3856522} for decorations~$\delta\in\{\noneCirc{},\downCirc{},\upCirc{},\upDownCirc{}\}^n$ such that any $\downCirc{}$ or $\upCirc{}$ is preceded and followed by $\upDownCirc{}$.
\end{remark}

\iffull
\section{Pattern-avoiding permutations and lattice congruences}
\label{sec:pattern}

In the first paper of this series~\cite{MR4391718}, we investigated how Algorithm~J can be used to generate different classes of pattern-avoiding permutations.
In this section, we briefly comment on the relation between pattern-avoiding permutations and lattice congruences.

\subsection{Preliminaries}

Given two permutations~$\pi\in S_n$ and~$\tau\in S_k$, we say that $\pi$ \emph{contains the pattern $\tau$}, if and only if $\pi=a_1\cdots a_n$ contains a subpermutation $a_{i_1}\cdots a_{i_k}$, $i_1<\cdots<i_k$, whose elements are in the same relative order as in~$\tau$.
If $\pi$ does not contain the pattern~$\tau$, then we say that \emph{$\pi$ avoids~$\tau$}.
For example, $\pi=6\colorbox{black!20}{\!35\!}41\colorbox{black!20}{\!2\!}$ contains the pattern $\tau=231$, as witnessed by the highlighted subpermutation.
On the other hand, $\pi=654123$ avoids $\tau=231$.
The patterns we discussed so far are often called \emph{classical} patterns.
In the following we will also need another type of pattern, called a \emph{vincular} pattern.
In a vincular pattern~$\tau$, there is exactly one underlined pair of consecutive entries, with the interpretation that in a match of the pattern~$\tau$ in a permutation~$\pi$, the underlined entries must be matched to adjacent positions in~$\pi$.
For instance, the permutation $\pi=\colorbox{black!20}{\!3\!}1\colorbox{black!20}{\!42\!}$ contains the pattern~$\tau=231$, but it avoids the vincular pattern~$\tau=\ul{23}1$.

We say that a classical pattern~$\tau\in S_k$ is \emph{tame}, if it does not have the largest value~$k$ at the leftmost or rightmost position.
For instance, $\tau=132$ is tame, but $\tau=123$ is not tame.
We say that a vincular pattern~$\tau\in S_k$ is \emph{tame}, if it does not have the largest value~$k$ at the leftmost or rightmost position, and the largest value~$k$ is part of the vincular pair.
For instance, $\tau=3\ul{41}2$ is tame, but $\tau=\ul{12}43$ or $\tau=\ul{41}23$ are not tame.

For any sequence $\tau_1,\ldots,\tau_\ell$ of classical or vincular patterns, we let $S_n(\tau_1,\ldots,\tau_\ell)$ denote the set of all permutations of $[n]$ that avoid each of the patterns $\tau_1,\ldots,\tau_\ell$.

\subsection{Pattern-avoidance and lattice congruences}

In~\cite{MR4391718} we proved the following.

\begin{theorem}[\cite{MR4391718}]
\label{thm:tame}
If $\tau_1,\ldots,\tau_\ell$ are all tame, then for any $n\geq 1$, Algorithm~J generates $S_n(\tau_1,\ldots,\tau_\ell)$ when initialized with the identity permutation.
\end{theorem}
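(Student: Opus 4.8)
The plan is to invoke Theorem~\ref{thm:jump}: since Algorithm~J is initialized with $\ide_n$, it suffices to check that $\ide_n\in L_n$ and that $L_n:=S_n(\tau_1,\ldots,\tau_\ell)$ is a zigzag language. The first point is immediate, since a tame pattern $\tau\in S_k$ cannot be the increasing permutation $12\cdots k$ (that would place $k$ at the rightmost position), hence $\tau$ is not a pattern of $\ide_n$; this applies to both the classical and the vincular case.

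For the main claim I would argue by induction on $n$, the cases $n\leq 1$ being trivial. By the recursive definition of zigzag language, it is enough to establish: (A) $\{p(\pi)\mid \pi\in S_n(\tau_1,\ldots,\tau_\ell)\}=S_{n-1}(\tau_1,\ldots,\tau_\ell)$, so that the shadow of $L_n$ is again a tame-pattern-avoiding set and therefore a zigzag language by the induction hypothesis; and (B) condition~(z1), namely $c_1(\pi'),c_n(\pi')\in S_n(\tau_1,\ldots,\tau_\ell)$ for every $\pi'\in S_{n-1}(\tau_1,\ldots,\tau_\ell)$. The inclusion $\supseteq$ in~(A) follows from~(B) via $p(c_n(\pi'))=\pi'$, so all the work sits in~(B) and in the inclusion $\subseteq$ of~(A).

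For~(B): suppose $c_1(\pi')$ contained some pattern $\tau_j\in S_k$. Since $\pi'=p(c_1(\pi'))$ avoids $\tau_j$, the occurrence must use the entry $n$; being the largest entry of the permutation, $n$ plays the role of the largest value $k$ of $\tau_j$, and as $n$ sits at the leftmost position, $k$ must be the leftmost entry of $\tau_j$, contradicting tameness. The argument for $c_n(\pi')$ is symmetric, forcing $k$ to be rightmost in $\tau_j$. One additionally checks that an occurrence of a vincular $\tau_j$ that does \emph{not} use $n$ is already an occurrence in $\pi'$, which holds because $n$ lies at an end of $c_1(\pi')$, respectively $c_n(\pi')$, so deleting it breaks no adjacency inside the occurrence. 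Note that~(z1) always holds here, so $L_n$ never needs condition~(z2).

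For the inclusion $\subseteq$ of~(A): deleting the largest entry $n$ from $\pi\in S_n(\tau_1,\ldots,\tau_\ell)$ never creates an occurrence of a classical pattern, so the only danger is a vincular $\tau_j$, and this is where I expect the main obstacle. Suppose $p(\pi)$ contained $\tau_j$ with its vincular pair at positions $i,i+1$. If $n$ does not lie strictly between these two positions in $\pi$, the same occurrence, with the same adjacency, survives in $\pi$ --- a contradiction; otherwise $n$ sits exactly between them. Now tameness of vincular patterns enters crucially: the largest value $k$ of $\tau_j$ is one of the two vincular entries, so one of the two vincular-matched entries of the occurrence is the maximum of the matched set. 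Replacing that entry by $n$ --- which, being the global maximum, is still the maximum of the new matched set and hence preserves all relative orders --- and using the position of $n$ produces an occurrence of $\tau_j$ in $\pi$ whose vincular pair now occupies two genuinely adjacent positions, contradicting $\pi\in S_n(\tau_1,\ldots,\tau_\ell)$. The delicate part is this substitution step together with the index bookkeeping showing that the new matched positions remain strictly increasing; by comparison the classical case and part~(B) are routine. Once~(A) and~(B) are in place, $L_n$ is a zigzag language satisfying~(z1), and Theorem~\ref{thm:jump} completes the proof.
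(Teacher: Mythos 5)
Your proposal is correct. Note that the paper itself does not re-prove Theorem~\ref{thm:tame}; it is cited directly from the first paper of the series~\cite{perm_series_i}, where it is established precisely by showing that $S_n(\tau_1,\ldots,\tau_\ell)$ is a zigzag language and invoking Theorem~\ref{thm:jump} --- which is exactly your plan. Your three-way breakdown (identity is in $L_n$; the projection $\{p(\pi)\mid\pi\in L_n\}$ equals $S_{n-1}(\tau_1,\ldots,\tau_\ell)$; and $c_1,c_n$ preserve avoidance) is the standard route, and your treatment of the one delicate case --- a vincular occurrence in $p(\pi)$ whose vincular pair straddles the position of the deleted entry $n$ --- is correctly handled by the substitution argument: because tameness forces the largest value $k$ of $\tau$ to sit in the vincular pair and not at an end, you can replace the maximum of the matched entries with $n$, the adjacency of the vincular pair is restored, the positions remain strictly increasing, and $n$ being the global maximum preserves all relative orders. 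Your remark that condition~(z1) always suffices here (so the technical condition~(z2) introduced for lattice congruences is never needed) is also correct and reflects that the original definition of zigzag language in~\cite{perm_series_i} only had~(z1).
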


Given Theorems~\ref{thm:tame} and~\ref{thm:lattice}, it is natural to ask: What is the relation between pattern-avoiding permutations and lattice congruences?
Can every lattice congruence of the weak order on~$S_n$ be realized by an avoidance set of tame patterns?
Conversely, does every tame permutation pattern give rise to a lattice congruence?
As we discuss next, the answer to the latter two questions is `no' in general, so pattern-avoiding permutations and lattice congruences are essentially different concepts, except in a few special cases, captured by Theorem~\ref{thm:well} below, and demonstrated by some relevant examples listed after the theorem.

Firstly, it is clear that every lattice congruence of the weak order on~$S_n$ can be described by an avoidance set of patterns of length~$n$, by avoiding all except one permutation from each equivalence class.
Note that in general we will not be able to improve on this approach considerably, as the number of lattice congruences grows double-exponentially with~$n$ (recall Theorem~\ref{thm:count-Qn}), so exponentially many avoidance patterns are needed to describe most lattice congruences.
Moreover, the avoidance patterns resulting from this method will in general not be tame.
For instance, consider the lattice congruence shown in Figure~\ref{fig:cong}, and consider the equivalence class $\{2134,2314,2341\}$.
It contains two patterns that are not tame, so at least one of them has to be avoided following this approach (even though we know that this particular congruence could be described more compactly by avoiding the tame pattern~231).

Conversely, consider the tame pattern~$\tau=2413$, and suppose we want to realize all pattern-avoiding permutations in~$S_5$ as a lattice congruence.
Now consider the permutation~$\pi=25314$, which contains the pattern~$\tau$.
This means $\pi$ must be in the same equivalence class with at least one of the four permutations~$(\pi_1,\pi_2,\pi_3,\pi_4)=(52314,23514,25134,25341)$ that are obtained from~$\pi$ by adjacent transpositions (recall Lemma~\ref{lem:interval}).
This means we have to use at least one of the fences~$f(2,5,\emptyset)$, $f(3,5,\emptyset)$, $f(1,3,\{2\})$, or $f(1,4,\{2,3\})$, respectively, in the forcing order.
However, this forces the pairs of permutations~$(25341,52341)$, $(23541,25341)$, $(52134,52314)$, or $(52314,52341)$, respectively, to also be in the same equivalence class (separately for each pair).
As none of those permutations contains the pattern~$\tau$, we get a contradiction.

We say that a set of vincular patterns~$P$ is \emph{well-behaved}, if each pattern $\tau\in P$ of length~$k$ can be written as $\tau=A\,\ul{k1}\,B$ where $AB$ is a permutation of $\{2,\ldots,k-1\}$, and moreover any vincular pattern obtained by permuting the entries within~$A$ and within~$B$ is also in~$P$.
Note that if $A$ is nonempty, then $\tau=A\,\ul{k1}\,B$ is a tame pattern.
Here are some examples of well-behaved sets of vincular patterns (cf.~\cite[Table~1]{MR4391718}): $P_1=\{2\ul{31}\}$, $P_2=\{2\ul{41}3\}$, $P_2'=\{3\ul{41}2\}$, $P_3=\{3\ul{51}24,3\ul{51}42\}$, $P_3'=\{24\ul{51}3,42\ul{51}3\}$.
The set~$P_1$ is a special case of the family of well-behaved sets of patterns $Q_k:=\big\{\tau\ul{k1}\mid \tau\text{ permutation of }\{2,\ldots,k-1\}\big\}$ for $k\geq 3$ (note that $P_1=Q_3$).
Clearly, this property is preserved under taking unions, so $P_2\cup P_2'$, $P_3\cup P_3'$, or $P_1\cup P_3$ are also well-behaved sets of patterns.

\begin{theorem}
\label{thm:well}
Let $P$ be a well-behaved set of vincular patterns.
For every $\tau=a_1\cdots a_k\in P$, consider the position~$i$ of the largest value~$k$ in~$\tau$, i.e., $a_i\,a_{i+1}=\ul{k1}$, and consider the rewriting rule
\begin{equation}
\label{eq:rewrite}
\_x_1\_\cdots \_x_{i-1}\_x_i x_{i+1}\_x_{i+2}\_\cdots\_ x_k\_ \equiv \_x_1\_\cdots \_x_{i-1}\_x_{i+1} x_i\_x_{i+2}\_\cdots\_ x_k\_,
\end{equation}
where the values $x_1,\ldots,x_k$ appear in the same relative order as in~$\tau$, and which therefore transposes the largest value~$x_i$ and the smallest value~$x_{i+1}$ in this subpermutation.
Combined for all $\tau\in P$, these rewriting rules define a lattice congruence of the weak order on~$S_n$ for any~$n\geq 1$.
Moreover, every equivalence class contains exactly one permutation that avoids every $\tau\in P$, which is the minimum of its equivalence class.
\end{theorem}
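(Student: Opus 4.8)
The plan is to realize $\equiv$ as a lattice congruence through Reading's bijection (Theorem~\ref{thm:reading}). First note that a single application of rule~\eqref{eq:rewrite} transposes two entries at adjacent positions — the ones matched to the vincular pair $\ul{k1}$ — with the larger before the smaller, so it turns a descent into an ascent and changes the number of inversions by exactly one. Hence any two permutations related by one application of~\eqref{eq:rewrite}, which I call a \emph{generated bar}, form a cover edge of the weak order, and such a cover edge lies in a unique fence $f(a,b,L)$. Let $\mathcal{G}\seq F_n$ collect all fences that contain at least one generated bar. I would prove: (i)~if some edge of a fence is a generated bar, then all of its edges are; (ii)~$\mathcal{G}$ is a downset of the forcing order; (iii)~the lattice congruence $\equiv'$ that $\mathcal{G}$ determines via Theorem~\ref{thm:reading} equals $\equiv$.

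For~(i), a generated bar in $f(a,b,L)$ arises from a pattern $\tau=A\,\ul{k1}\,B\in P$ having $a$ and $b$ as the smallest and largest entries of the occurrence, a set $U\seq L$ of $|A|$ entries matched to~$A$, and a set $W\seq\ol{L}$ of $|B|$ entries matched to~$B$ (with $\ol{L}:=\left]a,b\right[\setminus L$), where within the sorted list $\{a,b\}\cup U\cup W$ the entries of~$U$ occupy exactly the ranks of the $A$-values of~$\tau$ and those of~$W$ the ranks of the $B$-values. Because $P$ is \emph{well-behaved}, every rearrangement of the entries of $U$ and of those of $W$ still matches some pattern in~$P$; in particular, the existence of such $U,W$ depends only on the triple $(a,b,L)$ and not on the chosen edge. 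Given any other edge of $f(a,b,L)$, its descent-representative has $b$ immediately before $a$, has the entries of $L$ (hence the same set~$U$) to their left and the entries of $\ol{L}$ (hence the same set~$W$) to their right, and the ranks of $U\cup W$ are unchanged; so~\eqref{eq:rewrite} applies to it, and that edge is a generated bar. For~(ii), suppose $f(a,b,L)\prec f(c,d,M)$, i.e.\ $[c,d]\seq[a,b]$, $(a,b)\ne(c,d)$ and $M=L\cap\left]c,d\right[$, and let $\tau,U_0,W_0$ witness a generated bar in $f(c,d,M)$. Then $U_0\seq M\seq L$ and $W_0\seq\left]c,d\right[\setminus M=\left]c,d\right[\setminus L\seq\ol{L}$, and since every entry of $U_0\cup W_0$ lies strictly between $c$ and $d$ (hence strictly between $a$ and $b$), replacing $c,d$ by $a,b$ preserves all ranks of $U_0\cup W_0$; so $\tau,U_0,W_0$ also witness a generated bar in the edges of $f(a,b,L)$, and $f(a,b,L)\in\mathcal{G}$.

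By~(ii) and Theorem~\ref{thm:reading}, $\mathcal{G}$ determines a lattice congruence $\equiv'$ whose bars are precisely the cover edges contained in fences of~$\mathcal{G}$. Every generated bar lies in a fence of~$\mathcal{G}$ by definition, hence is an $\equiv'$-bar, so $\pi\equiv\rho$ implies $\pi\equiv'\rho$. Conversely, each $\equiv'$-class is an interval $[m,M]$ of the weak order (Lemma~\ref{lem:interval}); choosing a maximal chain from $m$ to any $\pi$ in the class — all of whose members lie in $[m,\pi]\seq[m,M]$ — exhibits $\pi\equiv'm$ through cover steps lying in the class, each of which is an $\equiv'$-bar, hence a generated bar by~(i), hence an $\equiv$-relation. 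Thus $\pi\equiv'\rho$ implies $\pi\equiv\rho$, and $\equiv=\equiv'$ is a lattice congruence with $F_\equiv=\mathcal{G}$.

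For the last statement, fix an equivalence class $X=[\min(X),\max(X)]$ (Lemma~\ref{lem:interval}). If $\min(X)$ contained some $\tau\in P$, rule~\eqref{eq:rewrite} would give $\min(X)\equiv\pi'$ with $\pi'$ obtained by replacing a descent by an ascent, so $\pi'\in X$ and $\pi'<\min(X)$, a contradiction; hence $\min(X)$ avoids every pattern in~$P$. If $\pi\in X$ and $\pi\ne\min(X)$, then the interval $[\min(X),\pi]$ of the weak order has at least two elements, so $\pi$ covers some $\pi'$ inside it; then $\pi'\lessdot\pi$ lies in~$X$, hence is an $\equiv$-bar, hence a generated bar by~(i), which means its descent-representative~$\pi$ contains a pattern of~$P$. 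So $\min(X)$ is the unique $P$-avoiding permutation in~$X$. The crux is~(i) and~(ii): the place where \emph{well-behavedness} is genuinely used is in proving that ``being a generated bar'' is a property of the underlying fence rather than of the individual cover edge; once that is established, the remainder is bookkeeping plus a direct appeal to Theorem~\ref{thm:reading}.
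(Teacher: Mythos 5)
Your proposal is correct and takes essentially the same approach as the paper: identify the fences that contain a rewriting-rule edge, show they form a downset of the forcing order, apply Theorem~\ref{thm:reading}, verify that every edge of such a fence is a rewriting-rule edge (the place where well-behavedness is used, your claim~(i)), and conclude by observing that $\min(X)$ is the unique $P$-avoider. The only organizational difference is that you prove the fence-invariance claim~(i) first and use it as a lemma for~(ii) and~(iii), whereas the paper establishes the downset property directly by constructing an explicit edge $\rho'\lessdot\pi'$ realizing a rewriting rule in each lower fence, and proves the analogue of your~(i) afterwards.
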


Theorem~\ref{thm:well} follows from \cite[Theorem~9.3]{MR2142177}.
Here we provide a short independent proof.

\begin{proof}
We first show that the set of all pairs of permutations $\pi\gtrdot \rho$ in the weak order on~$S_n$ that match one of the rewriting rules~\eqref{eq:rewrite} given by the patterns $\tau\in P$ form a downset of fences in the forcing order.

So suppose we are given $\pi\gtrdot \rho$ in~$S_n$ of the form
\begin{align*}
  \pi  &= \_x_1\_\cdots \_x_{i-1}\_x_i x_{i+1}\_x_{i+2}\_\cdots\_ x_k\_, \\
  \rho &= \_x_1\_\cdots \_x_{i-1}\_x_{i+1} x_i\_x_{i+2}\_\cdots\_ x_k\_,
\end{align*}
where the values $x_1,\ldots,x_k$ appear in the same relative order as in one of the patterns~$\tau\in P$.
Then we have $\pi\equiv\rho$ by~\eqref{eq:rewrite}.
Furthermore, the cover edge $\rho\lessdot \pi$ is contained in the fence~$f(x_{i+1},x_i,L)$, where $L$ is the set of all values between~$x_{i+1}$ and~$x_i$ that appear to the left of~$x_i$ and~$x_{i+1}$ in~$\pi$ and~$\rho$ (in particular, $\{x_1,\ldots,x_{i-1}\}\seq L$).
Let $F_\equiv$ be the union of all those fences, taken over all choices of~$\pi$ and~$\rho$ and all patterns~$\tau\in P$.

Consider a fence $f(a,b,M)$ such that $f(a,b,M)\prec f(x_{i+1},x_i,L)$, i.e., we have $a \leq x_{i+1}$, $b \geq x_i$, $L=M\cap\left]x_{i+1},x_i\right[$, in particular $x_1,\ldots,x_{i-1}\in M$ and $x_{i+2},\ldots,x_k\notin M$.
Consider the two permutations $\pi'\gtrdot\rho'$ defined by
\begin{align*}
  \pi'  &:= x_1\cdots x_{i-1} \,A\, b\,a \, x_{i+2}\cdots x_k \,B, \\
  \rho' &:= x_1\cdots x_{i-1} \,A\, a\,b \, x_{i+2}\cdots x_k \,B,
\end{align*}
where $A$ and $B$ are the increasing sequences of the elements in $M\setminus \{x_1,\ldots,x_{i-1}\}$ and $[n] \setminus (M \cup \{x_{i+2},\ldots,x_{k},a,b\})$, respectively.
Then the edge $\rho'\lessdot \pi'$ is in~$f(a,b,M)$.
Furthermore, as $a \leq x_{i+1}=\min\{x_1,\ldots,x_k\}$ and $b \geq x_i=\max\{x_1,\ldots,x_k\}$, the subpermutation $x_1\cdots x_{i-1} \,b\,a\, x_{i+2}\cdots x_k$ of $\pi'$ is a match of the pattern~$\tau$ and hence $\pi'\equiv\rho'$ by the corresponding rewriting rule, which shows that $f(a,b,M) \in F_\equiv$.
It follows that $F_\equiv$ is a downset of fences in the forcing order, i.e., $F_\equiv$ defines a lattice congruence of the weak order on~$S_n$ (recall Theorem~\ref{thm:reading}).

It remains to show that any two permutations connected by an edge in one of the fences of~$F_\equiv$ are related by one of our rewriting rules.
For this consider a fence~$f(a,b,L)$ from~$F_\equiv$.
By the definition above, there exists a sequence $(x_1,\ldots,x_k)$ of numbers from~$[n]$ and a pattern~$\tau=a_1\cdots a_k\in P$ with the vincular pair at position~$(i,i+1)$, i.e., $a_i\,a_{i+1}=\ul{k1}$, such that $x_1,\ldots,x_k$ appear in the same relative order as in~$\tau$, $a=x_{i+1}$, $b=x_i$, $x_1,\ldots,x_{i-1}\in L$, and $x_{i+2},\ldots,x_k \notin L$.
Hence, for any edge $\rho\lessdot\pi$ in~$f(a,b,L)$, we have that $(x_{i+1},x_i)=(a,b)$ is the pair transposed along this edge, and the values $x_1,\ldots,x_{i-1}$ appear to the left of this pair (not necessarily in this order), and the values $x_{i+1},\ldots,x_k$ appear to the right of this pair (not necessarily in this order), in both~$\rho$ and~$\pi$.
As $P$ is well-behaved, $P$ contains all the patterns obtained from~$\tau$ by permuting $a_1,\ldots,a_{i-1}$ to the left of~$a_i$ and $a_{i+2},\ldots,a_k$ to the right of $a_{i+1}$, implying that $\pi$ matches a pattern from~$P$, and hence there is a rewriting rule witnessing that $\pi\equiv\rho$.

The last part of the theorem follows by interpreting the rewriting rules as a downward orientation of all bars of the lattice congruence~$\equiv$.
Note here that the rule~\eqref{eq:rewrite} removes the inversion $(x_{i+1},x_i)$, so each equivalence class forms a directed acyclic graph, and the unique sink in it, which must be the minimum, is the permutation that avoids all patterns in~$P$.
\end{proof}

Applying Theorem~\ref{thm:well} to the well-behaved sets of patterns mentioned before, $P_1=\{2\ul{31}\}$ yields the Tamari lattice via the sylvester congruence $\_b\_ca\_\equiv \_b\_ac\_$ where $a<b<c$, with 231-avoiding permutations as the minima of the equivalence classes; see Figure~\ref{fig:cong} and note that $S_n(231)=S_n(2\ul{31})$.
More generally, for the set of patterns~$Q_k$ defined before, we obtain the increasing flip lattice on acyclic twists studied in~\cite{MR3741436}.
Moreover, $P_2\cup P_2'=\{2\ul{41}3,3\ul{41}2\}$ yields the rotation lattice on diagonal rectangulations~\cite{MR2871762,MR2914637,MR3878132}, with twisted Baxter permutations as minima.
Lastly, $P_3\cup P_3'=\{3\ul{51}24,3\ul{51}42,24\ul{51}3,42\ul{51}3\}$ yields the lattice on generic rectangulations~\cite{MR2864445}, with 2-clumped permutations as minima.
\fi

\section{Open questions}
\label{sec:open}

We conclude with the following open questions:

\begin{itemize}[itemsep=0ex,parsep=0.5ex,leftmargin=2ex]
\item
We showed that every quotient graph has a Hamilton path, and it is natural to ask whether it also has Hamilton~\emph{cycle} for $n\geq 3$; recall Remark~\ref{rem:cyclic}.
We conjecture that this is the case, and we verified this conjecture with computer help for~$n\leq 5$.
The proof technique described in~\cite{MR1723053} for the associahedron might be applicable for larger classes of quotientopes.

\item
Given our results in Table~\ref{tab:trans}, it seems challenging to characterize the non-isomorphic quotient graphs $\cQ_n'$ by their arc diagrams, and to count them; recall the examples from Figure~\ref{fig:iso}.
In particular, we wonder whether the sequence $|\cQ_n'|$ grows more than exponentially with~$n$, possibly even double-exponentially like $|\cQ_n|$?

\item
It would also be interesting to provide a lower bound on the number of non-isomorphic regular graphs~$\cR_n'$ that improves upon the trivial bound $|\cR_n'|\geq |\cV_n'|$, which comes from number partitions.
In the proof of Theorem~\ref{thm:part}, we can replace any factor~$S_{a_i+1}$, $a_i\geq 3$, coming from a partition with a part~$a_i$ by a prime regular quotient graph.
E.g., for $n=4$ there are 7 non-isomorphic prime regular graphs (10 regular, 4 of which vertex-transitive, 3 of which are products).
This technique could be improved, if we produce non-isomorphic prime regular graphs for larger part sizes~$a_i$.
To this end, it seems worthwile to study the set $\cP_n'\seq \cR_n'$ of non-isomorphic prime regular graphs.
Probably most arc diagrams with a single simple arc are prime graphs, which would show that $|\cP_n'|\geq \Theta(n)$, and this would improve the lower bound for~$|\cR_n'|$.

\item
Another natural direction are colorability properties of quotient graphs.
We found experimentally that all bipartite quotient graphs seem to be characterized as follows:
For any integers $a,b\in[n]$ with $b-a\geq 2$, we let $A(a,b)$ be the set of all arcs in the arc diagram connecting the point~$a$ with the point~$b$.
A quotient graph is bipartite if and only if its arc diagram is a collection of arc sets of the form $A(a_i,b_i)$, where the $[a_i,b_i]$ are non-nesting intervals.
Proving this rigorously would show that bipartite quotient graphs are counted by the Catalan numbers~$C_{n-1}$.

\item
It would also be interesting to investigate which of our results extend to lattice quotients of the weak order on finite Coxeter groups other than the symmetric group~$S_n$ (see \cite{MR3221544,MR3645056}).
\end{itemize}

\section{Acknowledgments}

We thank Jean Cardinal, Vincent Pilaud, and Nathan Reading for several stimulating discussions about lattice congruences of the weak order on~$S_n$.
Figures~\ref{fig:arcs} and~\ref{fig:quotient} in this paper were obtained by modifying and augmenting Figures~6 and~9 from~\cite{MR3964495}, and the original source code for those figures was provided to us by Vincent Pilaud.
We also thank the anonymous reviewer who provided many thoughtful remarks that helped improving this paper.

\bibliographystyle{alpha}
\bibliography{refs}

\newcommand{\etalchar}[1]{$^{#1}$}
\begin{thebibliography}{HHMW22}

\bibitem[ABHM00]{MR1737931}
R.~E.~L. Aldred, S.~Bau, D.~A. Holton, and B.~D. McKay.
\newblock Nonhamiltonian 3-connected cubic planar graphs.
\newblock {\em SIAM J. Discrete Math.}, 13(1):25--32, 2000.

\bibitem[Bar69]{MR0250896}
D.~W. Barnette.
\newblock Conjecture 5.
\newblock In {\em Recent progress in combinatorics. Proceedings of the Third
  Waterloo Conference on Combinatorics, May 1968. Edited by W. T. Tutte}, pages
  xiv+347. Academic Press, New York-London, 1969.

\bibitem[CP17]{MR3628225}
G.~Chatel and V.~Pilaud.
\newblock Cambrian {H}opf algebras.
\newblock {\em Adv. Math.}, 311:598--633, 2017.

\bibitem[CS98]{MR1618652}
I.~Chajda and V.~Sn{\'a}{\v s}el.
\newblock Congruences in ordered sets.
\newblock {\em Math. Bohem.}, 123(1):95--100, 1998.

\bibitem[CSS18]{MR3878132}
J.~Cardinal, V.~Sacrist{\'a}n, and R.~I. Silveira.
\newblock A note on flips in diagonal rectangulations.
\newblock {\em Discrete Math. Theor. Comput. Sci.}, 20(2):Paper No. 14, 22,
  2018.

\bibitem[DIR{\etalchar{+}}18]{demonet_et_al_2018}
L.~Demonet, O.~Iyama, N.~Reading, I.~Reiten, and H.~Thomas.
\newblock Lattice theory of torsion classes.
\newblock To appear in {\it Trans.\ Amer.\ Math.\ Soc.}; preprint available at
  \url{https://arxiv.org/abs/1711.01785}, 2018.

\bibitem[Dor95]{MR1484432}
G.~Dorfer.
\newblock Lattice-extensions by means of convex sublattices.
\newblock In {\em Contributions to general algebra, 9 ({L}inz, 1994)}, pages
  127--132. H{\"o}lder-Pichler-Tempsky, Vienna, 1995.

\bibitem[Gir12]{MR2914637}
S.~Giraudo.
\newblock Algebraic and combinatorial structures on pairs of twin binary trees.
\newblock {\em J. Algebra}, 360:115--157, 2012.

\bibitem[Gra53]{gray_1953}
F.~Gray.
\newblock Pulse code communication, 1953.
\newblock March 17, 1953 (filed Nov. 1947). U.S. Patent 2,632,058.

\bibitem[Gr{\"u}70]{MR266050}
B.~Gr{\"u}nbaum.
\newblock Polytopes, graphs, and complexes.
\newblock {\em Bull. Amer. Math. Soc.}, 76:1131--1201, 1970.

\bibitem[HHMW20]{MR4141256}
E.~Hartung, H.~P. Hoang, T.~M\"{u}tze, and A.~Williams.
\newblock Combinatorial generation via permutation languages.
\newblock In {\em Proceedings of the 2020 {ACM}-{SIAM} {S}ymposium on
  {D}iscrete {A}lgorithms}, pages 1214--1225. SIAM, Philadelphia, PA, 2020.

\bibitem[HHMW22]{MR4391718}
E.~Hartung, H.~Hoang, T.~M\"{u}tze, and A.~Williams.
\newblock Combinatorial generation via permutation languages. {I}.
  {F}undamentals.
\newblock {\em Trans. Amer. Math. Soc.}, 375(4):2255--2291, 2022.

\bibitem[HL07]{MR2321739}
C.~Hohlweg and C.~E. M.~C. Lange.
\newblock Realizations of the associahedron and cyclohedron.
\newblock {\em Discrete Comput. Geom.}, 37(4):517--543, 2007.

\bibitem[HN99]{MR1723053}
F.~Hurtado and M.~Noy.
\newblock Graph of triangulations of a convex polygon and tree of
  triangulations.
\newblock {\em Comput. Geom.}, 13(3):179--188, 1999.

\bibitem[IK00]{MR1788124}
W.~Imrich and S.~Klav{\v z}ar.
\newblock {\em Product graphs}.
\newblock Wiley-Interscience Series in Discrete Mathematics and Optimization.
  Wiley-Interscience, New York, 2000.
\newblock Structure and recognition, With a foreword by Peter Winkler.

\bibitem[Joh63]{MR0159764}
S.~Johnson.
\newblock Generation of permutations by adjacent transposition.
\newblock {\em Math. Comp.}, 17:282--285, 1963.

\bibitem[Kol87]{MR911564}
M.~Kolibiar.
\newblock Congruence relations and direct decompositions of ordered sets.
\newblock {\em Acta Sci. Math. (Szeged)}, 51(1-2):129--135, 1987.

\bibitem[Lod04]{MR2108555}
J.-L. Loday.
\newblock Realization of the {S}tasheff polytope.
\newblock {\em Arch. Math. (Basel)}, 83(3):267--278, 2004.

\bibitem[Lov70]{MR0263646}
L.~Lov{\'a}sz.
\newblock Problem 11, in {C}ombinatorial structures and their applications.
\newblock In {\em Proc. Calgary Internat. Conf. (Calgary, Alberta, 1969)},
  pages 243--246, New York, 1970. Gordon and Breach Science Publishers.

\bibitem[LP18]{MR3800847}
C.~Lange and V.~Pilaud.
\newblock Associahedra via spines.
\newblock {\em Combinatorica}, 38(2):443--486, 2018.

\bibitem[LR12]{MR2871762}
S.~Law and N.~Reading.
\newblock The {H}opf algebra of diagonal rectangulations.
\newblock {\em J. Combin. Theory Ser. A}, 119(3):788--824, 2012.

\bibitem[LRR93]{MR1239499}
J.~M. Lucas, D.~{Roelants van Baronaigien}, and F.~Ruskey.
\newblock On rotations and the generation of binary trees.
\newblock {\em J. Algorithms}, 15(3):343--366, 1993.

\bibitem[Luc87]{MR920505}
J.~M. Lucas.
\newblock The rotation graph of binary trees is {H}amiltonian.
\newblock {\em J. Algorithms}, 8(4):503--535, 1987.

\bibitem[{oei}20]{oeis}
{OEIS} {F}oundation {I}nc. {T}he on-line encyclopedia of integer sequences,
  2020.
\newblock \url{http://oeis.org}.

\bibitem[Pil18]{MR3741436}
V.~Pilaud.
\newblock Brick polytopes, lattice quotients, and {H}opf algebras.
\newblock {\em J. Combin. Theory Ser. A}, 155:418--457, 2018.

\bibitem[PP18]{MR3856522}
V.~Pilaud and V.~Pons.
\newblock Permutrees.
\newblock {\em Algebr. Comb.}, 1(2):173--224, 2018.

\bibitem[PS12]{MR2864447}
V.~Pilaud and F.~Santos.
\newblock The brick polytope of a sorting network.
\newblock {\em European J. Combin.}, 33(4):632--662, 2012.

\bibitem[PS19]{MR3964495}
V.~Pilaud and F.~Santos.
\newblock Quotientopes.
\newblock {\em Bull. Lond. Math. Soc.}, 51(3):406--420, 2019.

\bibitem[Rea02]{MR1902662}
N.~Reading.
\newblock Order dimension, strong {B}ruhat order and lattice properties for
  posets.
\newblock {\em Order}, 19(1):73--100, 2002.

\bibitem[Rea03]{MR2037526}
N.~Reading.
\newblock Lattice and order properties of the poset of regions in a hyperplane
  arrangement.
\newblock {\em Algebra Universalis}, 50(2):179--205, 2003.

\bibitem[Rea05]{MR2142177}
N.~Reading.
\newblock Lattice congruences, fans and {H}opf algebras.
\newblock {\em J. Combin. Theory Ser. A}, 110(2):237--273, 2005.

\bibitem[Rea06]{MR2258260}
N.~Reading.
\newblock Cambrian lattices.
\newblock {\em Adv. Math.}, 205(2):313--353, 2006.

\bibitem[Rea12a]{MR3221544}
N.~Reading.
\newblock From the {T}amari lattice to {C}ambrian lattices and beyond.
\newblock In {\em Associahedra, {T}amari lattices and related structures},
  volume 299 of {\em Prog. Math. Phys.}, pages 293--322.
  Birkh{\"a}user/Springer, Basel, 2012.

\bibitem[Rea12b]{MR2864445}
N.~Reading.
\newblock Generic rectangulations.
\newblock {\em European J. Combin.}, 33(4):610--623, 2012.

\bibitem[Rea15]{MR3335492}
N.~Reading.
\newblock Noncrossing arc diagrams and canonical join representations.
\newblock {\em SIAM J. Discrete Math.}, 29(2):736--750, 2015.

\bibitem[Rea16a]{MR3645056}
N.~Reading.
\newblock Finite {C}oxeter groups and the weak order.
\newblock In {\em Lattice theory: special topics and applications. {V}ol. 2},
  pages 489--561. Birkh{\"a}user/Springer, Cham, 2016.

\bibitem[Rea16b]{MR3645055}
N.~Reading.
\newblock Lattice theory of the poset of regions.
\newblock In {\em Lattice theory: special topics and applications. {V}ol. 2},
  pages 399--487. Birkh{\"a}user/Springer, Cham, 2016.

\bibitem[Sei59]{MR106189}
A.~Seidenberg.
\newblock A simple proof of a theorem of {E}rd{\H o}s and {S}zekeres.
\newblock {\em J. London Math. Soc.}, 34:352, 1959.

\bibitem[Ste95]{MR1380525}
J.~M. Steele.
\newblock Variations on the monotone subsequence theme of {E}rd{\H o}s and
  {S}zekeres.
\newblock In {\em Discrete probability and algorithms ({M}inneapolis, {MN},
  1993)}, volume~72 of {\em IMA Vol. Math. Appl.}, pages 111--131. Springer,
  New York, 1995.

\bibitem[Tam62]{MR0146227}
D.~Tamari.
\newblock The algebra of bracketings and their enumeration.
\newblock {\em Nieuw Arch. Wisk. (3)}, 10:131--146, 1962.

\bibitem[Tro62]{DBLP:journals/cacm/Trotter62}
H.~F. Trotter.
\newblock Algorithm 115: Perm.
\newblock {\em Commun. {ACM}}, 5(8):434--435, 1962.

\end{thebibliography}

\end{document}